\documentclass[12pt]{article}
\usepackage{amsmath,amsthm,amsfonts,amssymb,mathrsfs,bm,graphicx,amscd,epsfig,psfrag,verbatim,hyperref}
\usepackage[usenames]{color}
\usepackage{fancyhdr}
\usepackage{bbold}


\oddsidemargin	0.635cm
\textwidth	15.3cm
\topmargin	-1cm
\textheight	23cm
\parindent	10pt
\parskip 	\bigskipamount


\newtheorem{theorem}{Theorem}[section]
\newtheorem{lemma}{Lemma}[section]

\newtheorem{proposition}{Proposition}[section]

\newtheorem{remark}{Remark}[section]

\newtheorem{definition}{Definition}[section]
\newtheorem{claim}{Claim}

\newtheorem{conjecture}{Conjecture}[section]
\def\a{{\alpha}}
\def\b{\beta}

\def\Z{\mathbb{Z}}

\def\R{\mathbb{R}}
\def\C{\mathbb{C}}
\def\E{\mathbb{E}}
\def\P{\mathbb{P}}
\def\T{\mathbb{T}}

\def\indf{\mathbb{1}}

\def\eps{\epsilon}
\def\d{\mathrm{d}}
\def\del{\delta}

\def\la{\lambda}
\def\L{\Lambda}
\def\a{\alpha}
\def\b{\beta}
\def\d{\mathrm{d}}

\def\S{\Sigma}
\def\s{\sigma}

\def\ol{\overline}

\def\min{\mathrm{min}}
\def\max{\mathrm{max}}
\def\D{\Delta}

\def\zz{\mathcal{Z}}

\def\cL{\mathcal{L}}
\def\M{\mathcal{M}}

\def\G{\mathcal{G}}

\def\calN{\mathcal{N}}
\def\H{\mathcal{H}}
\def\c{\complement}

\def\e{\mathcal{E}}
\def\moc{\M_1(\C)}
\def\mor{\M_1(\R)}
\def\mocs{\M_1^{sym}(\C)}
\def\F{\mathcal{F}}

\def\f{F}

\def\fD{\mathfrak{D}}
\def\cC{\mathcal{C}}
\def\cF{\mathcal{F}}
\def\cG{\mathcal{G}}
\def\cL{\mathcal{L}}

\def\vec#1{\underline{#1}}

\def\probmeas{\M_1(\C)}

\renewcommand{\l}[0]{\left }
\renewcommand{\r}[0]{\right}

\makeatletter
\renewcommand*{\@cite@ofmt}{\hbox}
\makeatother
\makeatletter
\def\blfootnote{\gdef\@thefnmark{}\@footnotetext}
\makeatother

\begin{document}


\title{Point processes, hole events, and large deviations: random complex zeros and Coulomb gases}
\author{
\begin{tabular}{c}
 {Subhro Ghosh}\\  National University of Singapore\\ subhrowork@gmail.com
\end{tabular}
\and
\begin{tabular}{c}
{Alon Nishry}\\ Tel Aviv University\\ alonish@tauex.tau.ac.il
\end{tabular}
}
\date{}

\maketitle

\blfootnote{\textup{2010} \textit{Mathematics Subject Classification}: Primary 60G55; Secondary 60F10. \newline \textup{Keywords : Point processes, particle systems, Coulomb gases, random matrices, random polynomials, hole probabilities, large deviations, empirical measures.}
}

\begin{abstract}
We consider particle systems (also known as point processes) on the line and in the plane, and are particularly interested in ``hole'' events, when there are no particles in a large disk (or some other domain). We survey the extensive work on hole probabilities and the related large deviation principles (LDP), which has been undertaken mostly in the last two decades. We mainly focus on the recent applications of LDP-inspired techniques to the study of hole probabilities, and the determination of the most likely configurations of particles that have large holes.

As an application of this approach, we illustrate how one can confirm some of the predictions of Jancovici, Lebowitz, and Manificat for large fluctuation in the number of points for the (two-dimensional) $\beta$-Ginibre ensembles. We also discuss some possible directions for future investigations.

\end{abstract}

\section{Introduction}

Random point configurations, also known as point processes, have been an object of key interest in the last few decades - both in probability theory and in the statistical physics literature. The most extensive results have been obtained in Euclidean spaces of dimensions 1 and 2, although higher dimensions and other geometries have also been studied. 

A point process $\Pi$, usually defined to live on a Polish space $\S$ equipped with a regular Borel measure $\mu$, is a probability distribution over the space of locally finite point configurations on $\S$. We recall here that a Polish space is a separable and completely metrizable topological space.  It is well known (\cite[Chap. 9]{DV}) that, under mild conditions, the statistical behaviour of a point process is described by its various \textsl{$k$-point intensities} ($k=1,2,\dots$), which are roughly the joint probability densities of having particles at $k$ specified locations in $\S$. For almost all interesting point processes, these $k$ point intensities are absolutely continuous  with respect to $\mu^{ \otimes k}$ (referred to as the \textsl{background measure}), and the resulting Radon Nikodym derivatives are known as the $k$ point intensity functions. Often, in Euclidean spaces or other homogeneous spaces, key point processes exhibit \textsl{invariance}, which is to say that the law of the process is invariant under the isometries of $\S$.

The most fundamental example of a point process is the Poisson point process. The Poisson point process, defined on the space $\S$ with respect to the background measure $\mu$, is the unique point process on $\S$ that exhibits statistical independence of its point configurations in disjoint domains, with the particle count in a domain $D \subset \S$ obeying a Poisson distribution with mean $\mu(D)$. This characterizing property of spatial independence makes many important statistical properties easy to compute, which is the reason behind the popularity of the Poisson process as a probabilistic model for many real-world systems (\cite[Chap. 2]{DV}). At the same time, it renders the Poisson process ineffective in modeling many natural phenomena, particularly those involving local repulsion, like electron systems. 

In this context, several \textsl{natural} models have emerged which embody non-trivial spatial correlation, including local repulsion, and at the same time being amenable to analysis. These models often have their origin in statistical physics, principal among them being Coulomb systems. The Coulomb system of size $n$ in dimension $d$ and inverse temperature $\beta$ is given by the joint distribution \[p(x_1,\dots,x_n)=Z_n^{-1}\exp\l( \frac12 \b \left[ \sum_{i \ne j} \rho(|x_i - x_j|) - n\sum_{i=1}^n V(x_i) \right] \r),\] where $\rho$ is the fundamental solution to the Laplacian in $d$ dimensions (in particular, the logarithm function in $2$ dimensions), and $V$ is an external field (or confining potential). It is also of considerable interest to consider a similar system in 1D, with $\rho$ the logarithm function; this model (or rather its infinite particle limit) being popularly known as the Dyson log gas.

In 1 and 2 dimensions, at inverse temperature $\b=2$, the Coulomb system with logarithmic interactions (a.k.a. \textsl{Dyson log gas} in 1D)  is known to be a \textsl{determinantal} point process, meaning that its correlation functions are given by certain determinants. When $V(x) = |x|^2$, these ensembles can also be described as the set of eigenvalues of certain random matrices - in 1 dimension it is the Gaussian Wigner matrix (GUE),  and in 2 dimensions the Ginibre ensemble (having independent standard complex Gaussian entries). Both of these ensembles have well defined weak limits which are determinantal point processes with infinitely many particles. In 1 dimension, the fundamental solution to the Laplacian is $f(x)=|x|$, and it is natural to consider a Coulomb system with this interaction potential. This system has been extensively studied by Aizenmann, Lebowitz, Martin, Yalcin and others (see, e.g., \cite{AM}, \cite{AGL}, \cite{Ku}, \cite{MY} for some of the delicate results on this model).

Another important two-dimensional model is the zero set of the standard Gaussian Entire Function (henceforth abbreviated as GEF). The GEF is given by the Gaussian Taylor series
\begin{equation} \label{GEF} \\\f(z)=\sum_{k=0}^{\infty} \frac{\xi_k}{\sqrt{k !}} z^k,
\end{equation}
where the $\xi_k$-s are independent standard complex Gaussians. We mention that the truncation of this series at degree $n$ is called the Weyl polynomial of degree $n$. The study of the GEF and the Weyl polynomials have their origin in statistical physics, where they have been investigated in the context of quantum chaotic dynamics (\cite{BBL}).

An object of key interest in the study of point processes is the ``hole'' event $\H_R$, entailing that a large disk (or interval, according to the dimension) of radius $R$ around the origin does not contain any particles. Of course, this is a \textsl{rare event}, and $\P[\H_R] \to 0$ as $R \to \infty$. The quantitative asymptotics of how this decay takes place throws light on the statistical structure of the point process, and has been studied in fine detail for many key processes. A closely related, but much less understood question, pertains to what causes such a large ``hole'' to appear. This involves understanding the typical configuration of particles outside the ``hole'', and until recently such results were available only for $\b=2$ Coulomb systems in 1 and 2 dimensions (\cite{JLM}). Very recently, progress has been made on this front for the GEF zeros process, as well as for holes of general shapes for the Ginibre ensemble. This is based on large deviation techniques, which brings us to the third key object in this paper, namely large deviation principles (abbreviated henceforth as LDPs).

Roughly speaking, a sequence of random variables $X_n$, defined on a common Polish space $\S$, is said to satisfy an LDP with rate $a_n \uparrow \infty$, and rate function $I: \S \to \R_+$ if, for any `nice' set $\F \subset \S$, we have
$$
\P[ X_n \in \F ] \approxeq  \exp\left( - a_n \inf_{x \in F} I(x) \right), \quad n \to \infty.
$$
In the above display, $\approxeq$ is understood in the sense that $\frac{\log \P[ X_n \in \F ]}{a_n} \to  -\inf_{x \in F} I(x)$ as $n \to \infty$. For us, the most interesting case is when the random variables $X_n$ are \emph{empricial measures} of the points (that is, discrete counting measures of the points, normalized to be probability measure), and $\S$ is a space of probability measures on $\R^d$. Large deviation principles of this type have been extensively studied for various random matrix models (see, e.g., \cite{JLM}, \cite{Gu-ldp}, \cite{BG}, \cite{BZ}) in the last two decades. More recently, large deviation principles have been understood for several random polynomial models (see, e.g., \cite{ZZ}, \cite{GZ}, \cite{Zel}, \cite{BuZ}).

In (\cite{GN,AR})  the main ingredient of the approach to the ``hole configuration'' is to consider the ``hole'' event as a ``rare'' event in the setting of the LDP for the relevant matrix or polynomial ensemble. This intuitively leads to the conclusion that the (limiting) intensity measure of the particles outside the hole must be the minimizer of the large deviation rate functional, under the constraint of the existence of the hole. This approach seems to be rather promising in investigating related problems for point processes.
We provide more details on this approach in Section \ref{beta-ans}, where we study the two-dimensional $\beta$-Ginibre ensembles (also known as jellium or the one-component plasma).

The main thrust of this work is on a certain set of ideas that tie in point processes, large deviations and the study of the hole event. Such focus naturally leaves out several important strands of work related to various combinations of these concepts. For instance, we mention the recent series of works studying various fine properties of the large deviation principle for Coulomb systems. In particular, these works establish rigorous connections of the LDP to the concept of \textsl{renormalized energy} (\cite{BS},\cite{SS-1}, \cite{SS-2}, \cite{LSZ}, \cite{LS1}, \cite{RS}, \cite{ArSZ}). Another direction of recent investigations involves the study of spatial \textsl{rigidity} structures that arise in several of these natural models (\cite{GP}, \cite{G-1}, \cite{G-2}, \cite{GK}, \cite{Bu}, \cite{BDQ}, \cite{BQ}, \cite{O}). Beyond that, there is the extensive research on universality in random matrix ensembles (see, e.g., \cite{TV}, \cite{Er}). We will not pursue these matters here.

\section{Large deviations for empirical measures}

A sequence of random variables $X_n$, defined on a common Polish space $\S$, is said to satisfy an LDP with rate $a_n \uparrow \infty$ and a convex lower semi-continuous rate function $I: \S \to \R_+$ if, for any Borel measurable set $\F \subset \S$, we have 
\[  \varlimsup_{n \to \infty} \frac{1}{a_n} \log \P[ X_n \in \F ] \le -\inf_{x \in \F^o} I(x),  \] where $\F^o$ is the interior of the set $F$ and 
\[  \varliminf_{n \to \infty} \frac{1}{a_n} \log \P[ X_n \in \F ] \ge -\inf_{x \in \ol{\F}} I(x), \] where $\ol{\F}$ is the topological closure of the set $\F$.

\begin{definition}
A rate function $I: \S \to \R_+$ is \emph{good} if all its level sets $\{x : I(x) \le \alpha \}$ are compact subsets of $\S$.
\end{definition}

\subsection{Eigenvalues of random matrices}

Large deviations for empirical measures of random matrices have been studied by multiple authors. In this section, we will only focus on LDPs for the empirical measure of some specific families of random matrices, including Gaussian (and other unitarily invariant) Hermitian ensembles in 1D, and the  (real and complex) Ginibre ensemble in 2D. We direct readers interested in a more extensive survey, including dynamical aspects related to evolution under the Dyson Brownian motion, to \cite{Gu-ldp}.

To describe the results, we need to introduce some notation. We will denote by $\M_1(\R)$ and $\M_1(\C)$, respectively, the space of probability measures on $\R$ and $\C$. For a finite set of points $\L:=\{z_1,\dots,z_N\}$, we define the empirical measure \[ \e(\L):= \frac{1}{N} \sum_{i=1}^N \del_{z_i},  \] where $\del_\la$ is the delta measure (of unit mass) at the point $\la$.  The empirical measures of eigenvalues live in the space $\M_1(\R)$ (or $\M_1(\C)$ as appropriate), and for ``nice enough'' ensembles, obey LDPs in the same space.

\subsubsection{The Ginibre ensemble}
We begin with the LDP for the Ginibre ensemble. For the original paper, we refer the reader to \cite{BZ}. The (real or complex) Ginibre ensemble (of order $n$) is the ensemble of eigenvalues of $n \times n$ random matrices with i.i.d. Gaussian entries (resp., real or complex) with mean zero and variance $n^{-1}$. The (infinite) Ginibre ensemble is the limit, in distribution, of the finite Ginibre ensembles.  

In the complex case, the (infinite) Ginibre ensemble turns out to be the 2D Coulomb gas at inverse temperature $\beta=2$. It also turns out to be a determinantal point process on $\C$ with kernel $K(z,w)=\exp(z \ol{w})$ and background measure $\d \mu(z)=\pi^{-1} e^{-|z|^2} \d m(z)$, i.e. the standard Gaussian measure. Its distribution is invariant under the rigid motions of the plane, and it serves as a crucial example of an (invariant) 2D point process that is relevant to the physical literature. The finite $n$ joint density, also known as the density of states in the physical literature, is given by
\begin{equation} \label{ginden} \varrho(z_1,\dots,z_n) =|\D(z_1,\dots,z_n)|^2 \exp\l(-\sum_{i=1}^n |z_i|^2\r), \end{equation}
where
$$ \D(z_1,\dots,z_n) = \prod_{j < k} (z_j - z_k) $$
denotes the Vandermonde determinant.

In what follows, let $S=\R$ or $\C$, with the particular value being specified by the context. Let $\mocs$ denote the space of probability measures on $\C$ that are symmetric about the real axis. Define the \emph{logathmic potential} of a measure $\mu \in \moc$
\[ U_\mu(z) = \int \log |z-w| \, \d \mu(w). \]
The \emph{logarithmic energy} of $\mu$ is given by
\[ \S(\mu) = \int U_\mu(z) \, \d \mu(z) = \iint \log |z-w| \, \d \mu(z) \d \mu(w). \] Note that $\S(\mu)$ can equal $-\infty$, e.g., in the case of measures $\mu$ having an atomic component.
Define also the functionals $I^{[S]}: \M_1(\C) \mapsto [0,\infty] $ as
\[I^{[\R]}(\mu) = \begin{cases}  \frac{1}{2} \int |z|^2 \d \mu(z) -  \frac{1}{2} \S(\mu) - 3/8  &\mbox{ if } \mu \in \mocs  \\ \infty &\mbox{ otherwise }   \end{cases} \]
and
\[I^{[\C]}(\mu) =  \int |z|^2 \d \mu(z) -  \S(\mu) - 3/4\]
We can state the LDP for the (real or complex) Ginibre ensemble (denoted resp. $\G_n^{[\R]}$ or $\G_n^{[\C]}$)  as follows:
\begin{theorem}(\cite{BZ}, \cite{HP1})
	The sequence of empirical measures $\e(\G_n^{[S]})$ obey a large deviation principle in the space $\M_1(\C)$ with rate $n^2$ and good rate function $I^{[S]}$.
\end{theorem}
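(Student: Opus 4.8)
The plan is to follow the mean-field route for logarithmically interacting ensembles, in the spirit of Ben Arous--Guionnet and Hiai--Petz, specialized to the planar Coulomb gas at $\b=2$ (complex case) and $\b=1$ (real case). I will treat the complex case in detail and then indicate the modifications for the real one. First I would recast the joint density \eqref{ginden} at the level of the empirical measure. Writing $\mu_n := \e(\G_n^{[\C]})$ and using $|\D(z_1,\dots,z_n)|^2 = \exp\l( \sum_{i \ne j} \log|z_i - z_j| \r)$ together with the variance-$n^{-1}$ normalization (so that the confining weight is $\exp(-n\sum_i |z_i|^2)$), the exponent of the density becomes exactly
\[ n^2 \l( \iint_{z \ne w} \log|z-w| \, \d\mu_n(z)\d\mu_n(w) - \int |z|^2 \, \d\mu_n(z) \r). \]
Thus the law of $\mu_n$ is, up to the partition function $Z_n$, of the form $Z_n^{-1} \exp\l( -n^2 J(\mu_n) \r)$ with $J(\mu) := \int |z|^2 \, \d\mu(z) - \S(\mu)$, the only caveat being the removal of the diagonal, which I address via truncation below. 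Since the combinatorial (entropy) corrections are of order $n \log n$, they are negligible at speed $n^2$, so the candidate rate function is $J$ minus its infimum.

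Next I would verify that $J$ (equivalently $I^{[\C]}$) is a good rate function and locate its minimizer. Lower semicontinuity in the weak topology follows because $\mu \mapsto \int |z|^2 \, \d\mu$ is l.s.c. while $\mu \mapsto \S(\mu)$ is upper semicontinuous (one truncates $\log$ from below and uses its boundedness above on compacts); the confining growth of $|z|^2$ makes the sublevel sets tight, hence compact. The Euler--Lagrange condition $|z|^2 - 2 U_\mu(z) = \mathrm{const}$ on $\supp \mu$ identifies the unique minimizer as the circular law, the uniform probability measure on the unit disk, and a direct computation gives $\inf J = 3/4$; this constant is exactly $\lim_n n^{-2} \log Z_n^{-1}$, so that $I^{[\C]} = J - 3/4 \ge 0$ with equality only at the circular law.

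For the upper bound I would use the cutoff $\ell_M(z,w) := \max\l( \log|z-w|, -M \r)$. Since $\log|z-w| \le \ell_M(z,w)$ off the diagonal and $\ell_M$ equals $-M$ on the diagonal, one has $\sum_{i \ne j} \log|z_i - z_j| \le n^2 \iint \ell_M \, \d\mu_n \d\mu_n + nM$, and now $\mu \mapsto \iint \ell_M \, \d\mu \, \d\mu$ is bounded and continuous in the weak topology. The standard weak-topology argument (cover a compact set by small balls, estimate each ball by continuity of the truncated functional, then let $M \to \infty$ by monotone convergence) yields $\varlimsup_n n^{-2} \log \P[\mu_n \in \F] \le -\inf_{\mu \in \ol\F} I^{[\C]}(\mu)$ for closed $\F$, with the requisite exponential tightness supplied by the level-set compactness above.

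The lower bound is where I expect the main difficulty. Given a target $\mu$ with $J(\mu) < \infty$, I would first approximate it in the weak topology by measures with bounded density and compact support, on which $\S$ is continuous; then I would quantize $\mu$ by placing the $n$ points on a regular array refining $\mu$, arranged so that pairwise distances stay bounded below and the configuration stays away from the singular diagonal. A volume estimate for a weak neighborhood of such configurations then recovers the exponential rate $-n^2 J(\mu)$. The hard part is precisely the control of the logarithmic singularity: one must show that a neighborhood of $\mu$ of non-negligible probability contributes energy close to $\S(\mu)$, which forces the regularization-and-quantization scheme and the continuity of $\S$ along the approximating sequence. Finally, for the real ensemble the eigenvalues are symmetric about $\R$ and the interaction enters at $\b=1$ (half the coefficient); the $O(\sqrt n)$ real eigenvalues are negligible at speed $n^2$, so the same scheme restricted to $\mocs$ produces the factor $\tfrac12$ and, after recomputing the infimum on symmetric measures, the constant $3/8$, yielding $I^{[\R]}$.
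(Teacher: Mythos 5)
The paper itself contains no proof of this theorem: it is quoted as a known result, with the proof residing in \cite{BZ} and \cite{HP1}. Measured against those arguments, your outline does reproduce the standard architecture (rewrite the density in terms of the empirical measure, identify the rate functional and the circular law as its minimizer via the Euler--Lagrange condition, truncate for the upper bound, regularize and quantize for the lower bound), and your normalization bookkeeping (weight $e^{-n\sum|z_i|^2}$, constants $3/4$ and $3/8$) is correct. But two steps fail as written.

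First, in the upper bound you assert that $\mu \mapsto \iint \ell_M \,\d\mu\,\d\mu$, with $\ell_M(z,w)=\max(\log|z-w|,-M)$, is ``bounded and continuous in the weak topology''. It is neither: $\ell_M$ is unbounded above, and the functional is not weakly continuous --- take $\mu_k=(1-k^{-1})\delta_0+k^{-1}\delta_{x_k}$ with $|x_k|=e^{k}$, so that $\mu_k\to\delta_0$ weakly while $\iint \ell_M \,\d\mu_k\,\d\mu_k \to -M+2 \neq -M = \iint \ell_M \,\d\delta_0\,\d\delta_0$. Your truncation regularizes the diagonal singularity but not the logarithmic growth at infinity, so the ball-covering argument breaks down. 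The fix, and what \cite{BZ} (and \cite{BG} in 1D) actually do, is to fold the confinement into the kernel: set $F(z,w)=\tfrac12|z|^2+\tfrac12|w|^2-\log|z-w|$, which is lower semicontinuous and bounded below, and truncate it \emph{above}, $F_M=\min(F,M)$, which \emph{is} bounded and continuous; monotone convergence as $M\to\infty$ then recovers the full rate function. Second, exponential tightness is not ``supplied by the level-set compactness'' of $I^{[S]}$ --- the implication runs the other way (exponential tightness plus a weak LDP yields goodness of the rate function). It must be proved directly from the density, e.g.\ via an estimate of the form $\P\bigl[\int|z|^2\,\d\mu_n > L\bigr]\le e^{-cn^2 L}$ for large $L$, combined with compactness in $\M_1(\C)$ of the sets $\{\mu: \int|z|^2\,\d\mu\le L\}$. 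Finally, the real case is glossed over: the real Ginibre ensemble has a random number of real eigenvalues, and its eigenvalue density is not \eqref{ginden} with $\b=1$ but a sum over sectors (Edelman's formula) carrying additional error-function weights; showing that these weights and the summation over sectors are negligible at scale $n^2$, and that the symmetry constraint produces exactly the restriction to $\mocs$, is a genuine component of \cite{BZ}, not merely ``the same scheme restricted to $\mocs$''.
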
	
The rate function $I^{[S]}$ is minimized by the uniform measure on the unit disk.
Consequently, the (random) empirical measures $\e(\G_n^{[S]})$ converge a.s. to the uniform measure on the unit disk. 

\subsubsection{One-dimensional $\log$-gas}
We now consider $n$ particles on the real line, whose joint probability density is given by
\[Z_n^{-1} |\D(x_1,\dots,x_n)|^{\b} \exp\left(-n \sum_{i=1}^n V(x_i) \right), \]
where $\b>0$, the confining potential $V:\R \to \R$ is a continuous function such that, for some $\b'>1$ satisfying $\b' \ge \b$, we have 
\[ \varliminf_{|x| \to \infty} \frac{V(x)}{\b' \log |x|} >1, \]
and $Z_n = Z_n(\beta, V)$ is a normalizing constant.
Important special cases include $\b=1,2$ and $V(x)=|x|^2/2$, which correspond to the eigenvalues of the well-known Gaussian Orthogonal Ensemble (GOE) and Gaussian Unitary Ensemble (GUE) respectively. In other words, $\b=1$ and $2$ respectively correspond to the Hermitian versions of the real and complex Ginibre ensembles (tridiagonal random matrix models for general $\b > 0$ are known for the quadratic confining potential, see \cite{DE}).

To state the large deviations principle for such Hermitian random matrices (see, e.g., \cite{BG}, \cite{AGZ}), we introduce the rate function $I^V_\b : \mor \to [0,\infty]$ as:
\[I^V_\b = \begin{cases} \int V(x) \d \mu(x) - \frac{\b}{2} \S(\mu) -c^V_\b & \mbox{ if } \int V(x) \d \mu(x) < \infty \\ \infty &\mbox{ otherwise },  \end{cases} \]
where \[c^V_\b =  \inf_{\nu \in \mor} \bigg\{  \int V(x) \d \nu(x) - \frac{\b}{2} \S(\nu)  \bigg\}. \]
With these definitions in hand, we state the LDP for the above Hermitian random matrices as follows: 
\begin{theorem}(\cite{BG})
	The sequence of empirical measures $\e(\P^n_{V,\b})$ obey a large deviation principle in the space $\mor$ with rate $n^2$ and good rate function $I^V_\b$.
\end{theorem}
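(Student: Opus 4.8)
The plan is to follow the classical route of Ben Arous and Guionnet, recasting the joint density as a near-exponential functional of the empirical measure at speed $n^2$, and then combining a weak LDP with exponential tightness. First I would rewrite the density. Writing $L_n := \e(\P^n_{V,\b})$ and using $\prod_{i<j}|x_i-x_j|^\b = \exp(\frac{\b}{2}\sum_{i\ne j}\log|x_i-x_j|)$, the exponent $\frac{\b}{2}\sum_{i\ne j}\log|x_i-x_j| - n\sum_i V(x_i)$ becomes, after symmetrizing the confining term, $\sum_{i\ne j} g(x_i,x_j) + o(n^2)$ with
$$g(x,y) = \frac{\b}{2}\log|x-y| - \tfrac12\big(V(x)+V(y)\big),$$
so that the density is proportional to $\exp\big(n^2 \iint_{x\ne y} g\,\d L_n\,\d L_n + o(n^2)\big)$ and the candidate rate functional is $J(\mu) := -\iint g\,\d\mu\,\d\mu = \int V\,\d\mu - \frac{\b}{2}\S(\mu)$, so that $I^V_\b = J - c^V_\b$ with $c^V_\b = \inf J$.

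Next I would establish that $I^V_\b$ is a good rate function. The growth assumption $\varliminf_{|x|\to\infty} V(x)/(\b'\log|x|) > 1$ with $\b'\ge\b$ is exactly what makes $g$ bounded above on the off-diagonal and forces the level sets $\{I^V_\b \le \a\}$ to be tight; lower semicontinuity of $\mu \mapsto -\S(\mu)$ follows by writing it as the supremum over $M$ of the bounded continuous functionals $\mu \mapsto -\iint (\log|x-y|\wedge M)\,\d\mu\,\d\mu$. Combined with lower semicontinuity of $\mu\mapsto\int V\,\d\mu$, this yields closed and tight, hence compact, level sets.

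For the upper bound on a closed set $F$, I would replace $g$ by the truncation $g_M := g\vee(-M)$, which is bounded and continuous (it equals $-M$ on the diagonal, where $g=-\infty$). Since $g\le g_M$, and the diagonal terms of $\iint g_M\,\d L_n\,\d L_n$ contribute only $O(1/n)$, the weak convergence of $L_n$ together with a Laplace-type estimate yields $\varlimsup n^{-2}\log\P[L_n\in F] \le -\inf_{\mu\in F}(J_M(\mu) - \inf J)$, where $J_M$ is built from $g_M$; letting $M\to\infty$ and invoking monotone convergence recovers $J$. The partition function asymptotics $n^{-2}\log Z_n \to -c^V_\b$ come from the same analysis applied to $F = \mor$. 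Exponential tightness at speed $n^2$, again a consequence of the confinement of $V$, then upgrades the weak upper bound to all closed sets.

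The lower bound is where the main difficulty lies. For $\mu$ in an open set $G$ I would first reduce, by approximation in the weak topology, to measures with compact support and bounded density (for which $\S(\mu)>-\infty$), then discretize $\mu$ by placing the $x_i$ at its quantiles and integrating over small boxes around these ideal positions. The crux is to lower bound $\sum_{i\ne j}\log|x_i-x_j|$ over such boxes without the near-diagonal pairs, where $\log|x_i-x_j|\to-\infty$, destroying the estimate; this is handled by keeping the particles suitably separated and exploiting the integrability of the logarithmic singularity, and is the step I expect to be the principal obstacle. Assembling these pieces gives $\varliminf n^{-2}\log\P[L_n\in G]\ge -\inf_{\mu\in G} I^V_\b(\mu)$, which together with the upper bound completes the proof.
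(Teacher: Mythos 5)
You should note at the outset that the paper contains no proof of this theorem: it is a survey statement quoted from \cite{BG} (see also \cite{AGZ}), so your proposal can only be compared with the standard Ben Arous--Guionnet argument, and that is indeed the strategy you follow — symmetrization of the density into a quadratic functional of the empirical measure, a truncated-kernel Laplace bound plus exponential tightness for the upper bound, and a discretization/regularization argument for the lower bound. The skeleton is right, and you correctly identify the diagonal logarithmic singularity as the main difficulty in the lower bound.

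There is, however, one step in your write-up that is genuinely false, not merely unjustified: the goodness argument. You claim that $\mu\mapsto-\S(\mu)$ is weakly lower semicontinuous on $\M_1(\R)$, represented as a supremum of ``bounded continuous'' functionals $\mu\mapsto-\iint(\log|x-y|\wedge M)\,\d\mu\,\d\mu$. Neither part holds. The integrand $-(\log|x-y|\wedge M)$ blows up at the diagonal, so these functionals are not bounded; and the truncation runs the wrong way: since $\log\wedge M\uparrow\log$ as $M\uparrow\infty$, the functionals $-\iint(\log\wedge M)\,\d\mu\,\d\mu$ \emph{decrease} to $-\S(\mu)$, which is therefore their infimum, not their supremum. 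Worse, the conclusion itself fails on $\M_1(\R)$: take $\mu_n=(1-\frac{2}{n})\nu+\frac1n\rho_n+\frac1n\rho_n'$, where $\nu$ is uniform on $[0,1]$ and $\rho_n,\rho_n'$ are uniform on unit intervals separated by distance $e^{n^3}$; then $\mu_n\to\nu$ weakly, while the cross term forces $\S(\mu_n)\ge 2n+O(1)\to+\infty$, i.e.\ $-\S(\mu_n)\to-\infty<-\S(\nu)$. Lower semicontinuity (hence closedness of level sets) holds only for the \emph{combined} functional $\int V\,\d\mu-\frac{\b}{2}\S(\mu)$, and its proof needs the growth of $V$. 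The repair is already implicit in your own upper bound: with $g_M=g\vee(-M)$, which is bounded and continuous precisely because the growth condition makes $g$ bounded \emph{above}, the functionals $J_M(\mu)=-\iint g_M\,\d\mu\,\d\mu$ are continuous and increase to $J(\mu)$ as $M\to\infty$, exhibiting $J$ as a supremum of continuous functionals; together with the tightness you derive from the confinement, this gives goodness. A smaller gloss in the same direction: the single-particle term $\sum_i V(x_i)$ that you discard as $o(n^2)$ after symmetrizing should instead be retained, since the factor $\prod_i e^{-V(x_i)}\,\d x_i$ is exactly the finite reference measure that makes the truncated Laplace upper bound integrable over $\R^n$. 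With these repairs your proposal matches the proof in the cited source.
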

It is known that $I^V_\b$ attains its minimum value in the space $\mor$ at a unique measure $\s^V_\b$ that is compactly supported and is characterized by \[ V(x) - \b U_{\s^V_\b}(x) \begin{cases} = C^V_\b  &\mbox{ for } \s^V_\b-a.e. x \\ > C^V_\b &\mbox{ for all } x \notin \mathrm{supp}(\s^V_\b) \end{cases},\]
where $U_\mu$ denotes the logarithmic potential of the measure $\mu$, and $C^V_\b$ is some constant. An upshot of this is that the (random) empirical measures $\e(\P^n_{V,\b})$ converge a.s. to the measure $\s^V_\b$.

We mention in passing that large deviation principles are also known for  $\b$-Ginibre ensembles (defined in analogy to the $\b$ ensembles in 1D by using a general $\b$ exponent on the Vandermonde in \eqref{ginden}); these correspond to the 2D Coulomb gas (for general inverse temperature $\b$). For details, we refer to \cite{HP2}.

\subsection{Zeros of random polynomials}

The theory of large deviations for empirical measures of zeros of random polynomials is of more recent origin. One of the earliest articles in this 
 direction, namely (\cite{ZZ}), deals with the crucial case of (complex) Gaussian random polynomials, i.e., random polynomials with independent Gaussian coefficients (with mean zero and possibly decaying variances). Depending on the mode of decay of the variances, we obtain several distinguished ``standard ensembles'' - Kac (constant variance of coefficients), Elliptic (coefficient of $z^k$ has variance ${n \choose k} k!$) and Weyl (coefficient of $z^k$ has variance $1/k!$). \cite{ZZ} covers all these cases, as well as more general scalings of coefficients. In fact, \cite{ZZ} works in the more general setting of Gaussian measures on polynomial spaces of degree $n$ that live on the Riemann surface $\C\P^1$. These Gaussian measures are determined by inner products naturally induced from a metric $h$ and measure $\nu$ on $\C\P^1$, the only condition being that the pair $(h,\nu)$ satisfy the so-called Bernstein-Markov property. The results obtained on the Riemann sphere $\C\P^1$ can be transferred (via the stereographic projection) to the complex plane. For a detailed exposition of this, we refer the reader to \cite{But} (which also deals with the case of real Gaussian coefficients).

\subsubsection{Weyl polynomials}\label{weyl-polynomials}
In this article, we will focus on the crucial case of the Weyl polynomials,
$$
P_n(z) = \sum_{k=0}^{n} \frac{\xi_k}{\sqrt{k !}} z^k,
$$
which are naturally related to the so-called standard planar Gaussian Entire Function (GEF, see \eqref{GEF}).  Viewed over the complex plane, this corresponds to $h$ the standard Euclidean metric and $\nu$ the standard Gaussian measure on $\C$.

In what follows, we will denote by $\zz_n = \{z_1, \dots, z_n \}$ the zero set of the Weyl polynomial of degree $n$, \emph{scaled down by} $\sqrt{n}$. Also recall that for any measure $\mu \in \moc$, we denote by $U_\mu$ and $\S(\mu)$ the logarithmic potential and the logarithmic energy of $\mu$, respectively. We can now state the following LDP for zeros of Weyl polynomials:

\begin{theorem}(\cite{ZZ}, \cite{BuZ})
	The sequence of empirical measures $\e(\zz_n)$ satisfy a large deviation principle in the space $\moc$ with rate $n^2$ and good rate function 
	\[I^Z(\mu)= 2 \sup_{z \in \C} \l( U_\mu(z) - \frac12|z|^2 \r) - \S(\mu) -C. \]
\end{theorem}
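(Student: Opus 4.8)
The plan is to route everything through the logarithmic potential, exploiting that $\zz_n$ comes from a Gaussian polynomial (so the coefficients, not the zeros, are the elementary objects) together with the explicit---up to normalisation---joint density of the zeros. Write $P_n(z)=\xi_n(n!)^{-1/2}\prod_{i=1}^n(z-w_i)$ for the unscaled zeros $w_i$, so that the points of $\zz_n$ are $z_i=w_i/\sqrt n$ and $\mu_n:=\e(\zz_n)$ has logarithmic potential
\[ U_{\mu_n}(\zeta)=\frac1n\sum_{i=1}^n\log|\zeta-z_i|=\frac1n\log\l|P_n(\sqrt n\,\zeta)\r|-\tau_n,\qquad \tau_n=\tfrac12\log n-\tfrac1{2n}\log(n!)+\tfrac1n\log|\xi_n|. \]
By the Poincar\'e--Lelong formula $\mu_n=\tfrac1{2\pi n}\Delta_\zeta\log|P_n(\sqrt n\,\zeta)|$ as distributions, so $\mu_n$ is a continuous image of the random field $\tfrac1n\log|P_n(\sqrt n\,\cdot)|$ and conversely $U_{\mu_n}$ determines it; this equivalence is what will let us transfer estimates on the field to estimates on the empirical measure.

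First I would compute the law of $(w_1,\dots,w_n)$. Changing variables from the Gaussian coefficient vector to (leading coefficient, zeros) and integrating out the leading coefficient---a Gamma integral over $\C$---produces a density proportional to $|\D(w)|^2\,Q(w)^{-(n+1)}$, where $Q(w)=\sum_{j=0}^n (n-j)!\,|e_j(w)|^2$ and the $e_j$ are the elementary symmetric functions of the $w_i$. The identity that makes this tractable, obtained by expanding $R(z):=\prod_i(z-w_i)$ in monomials and using $\int_\C|z|^{2m}\,\d\g(z)=m!$ for the standard Gaussian measure $\d\g=\pi^{-1}e^{-|z|^2}\,\d m$, is
\[ Q(w)=\int_\C|R(z)|^2\,\d\g(z)=\|R\|_{L^2(\g)}^2. \]
Hence the joint law of the zeros is proportional to $|\D(w)|^2\,\|R\|_{L^2(\g)}^{-2(n+1)}$, the analogue of the Vandermonde/energy structure of the Ginibre ensemble but with the self-energy replaced by a norm of the polynomial itself.

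The heart of the matter is to pass, on the exponential scale, from this $L^2$ norm to a weighted sup-norm. Since the Gaussian weight satisfies the Bernstein--Markov property invoked earlier, one has
\[ \frac1n\log\|R\|_{L^2(\g)}=\frac1n\log\,\sup_{z\in\C}\l(|R(z)|\,e^{-|z|^2/2}\r)+o(1) \]
uniformly over the relevant configurations, and after rescaling $z=\sqrt n\,\zeta$ (using $\log\sup=\sup\log$) the right-hand side becomes $\tfrac12\log n+\sup_\zeta\l(U_{\mu_n}(\zeta)-\tfrac12|\zeta|^2\r)+o(1)$; the confining term $\tfrac12|\zeta|^2$, which just dominates $U_{\mu_n}(\zeta)\sim\log|\zeta|$ at infinity, keeps this supremum on a fixed compact set. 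Simultaneously $\tfrac1{n^2}\log|\D(w)|^2=\tfrac12\log n+\S(\mu_n)+o(1)$. Collecting the two contributions, the normalised log-density at speed $n^2$ converges---modulo the $\log n$ terms and the partition function $Z_n$, which together fix the constant $C$---to $-\l[\,2\sup_\zeta(U_{\mu_n}(\zeta)-\tfrac12|\zeta|^2)-\S(\mu_n)-C\,\r]=-I^Z(\mu_n)$. The LDP then follows by the standard route: exponential tightness in $\moc$ (the probability that some zero leaves a large disc is smaller than $e^{-Kn^2}$ for every $K$, thanks to the growth of the weight), a large-deviation upper bound via a covering argument using upper semicontinuity of $\mu\mapsto-\S(\mu)$ and lower semicontinuity of the supremum term, and a matching lower bound.

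I expect the lower bound, together with making the Bernstein--Markov comparison rigorous uniformly in $n$, to be the main obstacle. For the lower bound one must, given a target $\mu$ with $I^Z(\mu)<\infty$, exhibit configurations whose empirical measures approximate $\mu$ while controlling both competing functionals---the global maximum $\sup_\zeta(U_\mu(\zeta)-\tfrac12|\zeta|^2)$ and the pairwise energy $\S(\mu)$---at once. This is delicate precisely because the two terms respond in opposite ways to spreading the points out, and because one must approximate a general $\mu$ (possibly with $\S(\mu)=-\infty$ or a singular part) by measures carried on well-separated configurations while keeping $I^Z$ continuous along the approximation; regularising $\mu$ by convolution together with an equicontinuity estimate for the logarithmic potential is the natural device. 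The semicontinuity bookkeeping in the supremum term, and the borderline domination of $U_\mu$ by the weight at infinity, are the points that will require the most care.
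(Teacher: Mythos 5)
Your proposal is correct and follows essentially the same approach the paper sketches in Section \ref{weyl-polynomials}: the joint density \eqref{weyl-joint-dens} obtained by integrating out the leading coefficient, with the Vandermonde yielding the $\S(\mu)$ term and Laplace-type asymptotics of the weighted $L^2$ integral (equivalently, the Bernstein--Markov comparison with the weighted sup-norm) yielding the term $2\sup_{z\in\C}\l(U_\mu(z)-\frac12|z|^2\r)$, exactly as in \cite{ZZ} and \cite{BuZ}. Your remaining steps (exponential tightness, semicontinuity bookkeeping, and regularization for the lower bound) are the standard route of those cited proofs, and your identification of the lower bound and the uniform norm comparison as the delicate points matches where they concentrate their effort.
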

The minimizer (and, consequently, the a.s. limit of the $\zz_n$-s ) of the above rate function is the uniform measure on the unit disk. The constant $C$ is such that $I$ evaluated at this measure is 0.

A word is in order here about the `unusual' form of the rate function, and especially the appearance of the non-linear and rather non-differentiable $\sup$ term. The key to this lies in an expression for the joint density of (the scaled) zeros for the Weyl polynomial (w.r.t. Lebesgue measure on $\C^n$), which can be written as 
\begin{equation}\label{weyl-joint-dens}
\rho(z_1,\dots,z_n) \propto |\D (z_1,\dots, z_n)|^2 \l( \frac{n}\pi \int_\C |Q_{\zz_n}(w)|^2 e^{-n |w|^2} \,\d m(w) \r)^{-(n+1)},
\end{equation}
where $Q_{\zz_n}$ is the monic polynomial
\[ Q_{\zz_n}(w) = (w-z_1)\dots (w-z_n), \]
and $m$ is Lebesgue measure on $\C$.
The Vandermonde determinant leads to the logarithmic energy term in the rate function. 
In addition, we have \[  |Q_{\zz_n}(w)|^2 e^{-n|w|^2} = \exp \l( 2n \left[U_{\e(\zz_n)}(w) - \frac12 |w|^2 \right] \r), \] and we see that for large $n$ the main contribution to the integral in \eqref{weyl-joint-dens} is coming from the maximum over $w \in \C$ of the term inside the square brackets.

\subsubsection{Other polynomials}

LDPs are known for empirical measures of zeros of many other random polynomial and polynomial-like ensembles, in addition to the models described above. Some key examples are \cite{FZ,Zel,GZ}, and \cite{BuZ}.

Some of these ensembles pose specific technical challenges of their own in establishing the LDP. As an example, we can consider the LDP for the Kac polynomial ensemble with exponential coefficients (\cite{GZ}). The major new difficulty is that all the coefficients are now positive a.s. This restricts the possible zero sets of such polynomials, the precise nature of which was not fully understood until recently. E.g., Obrechkoff's Theorem (\cite{Ob}) provides a necessary (but not sufficient) condition that the number of zeros of such a polynomial in a conical sector (around $\R_+$) can grow at most linearly with the angle at the apex of the cone. This issue makes an impact even on the form of the LDP rate functional:
\begin{theorem}(\cite{GZ})
	The empirical measure of zeros of Kac polynomials with exponential coefficients satisfy an LDP at rate $n^2$ and good rate function
	\[ I(\mu)= \begin{cases} U_\mu(1) - \frac{1}{2} \S(\mu) & \mbox{ if } \mu \in \mathcal{P} \\ \infty &\mbox{ otherwise}, \end{cases} \]
    where $\mathcal{P}$ is the set of all measures in $\moc$ that are weak limits of empirical measures of polynomials with positive coefficients.
\end{theorem}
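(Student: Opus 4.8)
The plan is to derive an exact formula for the joint density of the zero set $\zz_n$ and then read the rate function off its exponential rate, in the spirit of the density \eqref{weyl-joint-dens} for Weyl polynomials. Write the random polynomial as $P_n(z)=\sum_{k=0}^n a_k z^k = a_n\prod_{j=1}^n(z-z_j)$, with i.i.d.\ standard exponential coefficients $a_k$. I would change variables from the coefficient vector $(a_0,\dots,a_n)$ to the pair $\bigl(t,(z_1,\dots,z_n)\bigr)$, where $t=a_n$ is an overall scale and the $z_j$ are the zeros. Since $a_k=t\,\s_k$ with $\s_k=(-1)^{n-k}e_{n-k}(z_1,\dots,z_n)$ the (signed) elementary symmetric functions, the Jacobian of this map is, up to a bounded combinatorial factor, $t^{\,n}\,|\D(z_1,\dots,z_n)|$, the Vandermonde entering exactly as in the random-matrix computations. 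Because the $a_k$ are real and positive, the zeros are conjugate-symmetric and the admissible configurations are precisely those with $\s_k>0$ for every $k$; this global positivity constraint is what eventually restricts the support of $I$ to $\p$.

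Integrating out the scale is the step that produces the $U_\mu(1)$ term. On the positivity region the coefficient density is $\exp(-\sum_k a_k)=\exp(-t\,S)$, where
\[ S\;=\;\sum_{k=0}^n\s_k\;=\;\prod_{j=1}^n(1-z_j)\;=\;\frac{P_n(1)}{a_n}, \]
so that $S$ is just the $\ell^1$-norm of the positive coefficient vector divided by $a_n$. The scale integral is then the elementary Gamma integral $\int_0^\infty t^{\,n}e^{-tS}\,\d t=n!\,S^{-(n+1)}$, giving
\[ \rho(z_1,\dots,z_n)\;\propto\;|\D(z_1,\dots,z_n)|\;\Bigl(\prod_{j=1}^n(1-z_j)\Bigr)^{-(n+1)}\,\indf_{\{\s_k>0\ \forall k\}}. \]
Passing to $\tfrac1{n^2}\log$ and to the empirical measure, the Vandermonde contributes $\tfrac1{n^2}\log|\D|\to\tfrac12\S(\mu)$, while $\tfrac{n+1}{n^2}\log\prod_j(1-z_j)=\tfrac{n+1}{n^2}\sum_j\log|1-z_j|\to U_\mu(1)$, using $\tfrac1n\sum_j\log|1-z_j|\to\int\log|1-w|\,\d\mu(w)=U_\mu(1)$. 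Hence the exponential rate of the unnormalized density tends to $\tfrac12\S(\mu)-U_\mu(1)$, and after accounting for the partition function $Z_n$, whose rate is pinned by the requirement that $I$ vanish at its minimizer (the uniform measure on the unit circle, the a.s.\ limit of $\e(\zz_n)$), one arrives at $I(\mu)=U_\mu(1)-\tfrac12\S(\mu)$. The evaluation at the point $1$ is thus a direct fingerprint of the identity $\sum_k|a_k|=\sum_k a_k=P_n(1)$ valid for positive coefficients.

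With this density in hand, the LDP itself would follow the now-standard route for empirical measures of $\b$-ensembles: establish exponential tightness at speed $n^2$ (using the exponential tails together with Obrechkoff-type confinement of the zeros), prove the upper bound by covering arguments and the lower bound by a local density estimate on small neighbourhoods in the weak topology, and match both to the functional above. The indicator $\indf_{\{\s_k>0\}}$ forces $I\equiv+\infty$ off $\p$ in the upper bound, since any configuration violating positivity carries exactly zero density.

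The hard part will be the interplay between the lower bound and the constraint set $\p$. Unlike the Gaussian ensembles, the requirement $\s_k>0$ is a \emph{non-local} condition coupling all the zeros through the elementary symmetric functions, and $\p$ is available only as the closure of empirical measures of positive-coefficient polynomials, with merely \emph{necessary} constraints (such as the linear-in-angle bound of Obrechkoff's theorem) known in closed form. Consequently the lower bound requires, for each $\mu\in\p$, an explicit approximation of $\mu$ by empirical measures $\e(\zz_n)$ of genuine positive-coefficient polynomials, together with a matching lower bound on the probability of a small neighbourhood; controlling the density $|\D|\,\bigl(\prod_j(1-z_j)\bigr)^{-(n+1)}$ along such an approximating sequence while respecting the global positivity constraint is the delicate technical core. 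One must also verify that the effective domain of $I$ is exactly $\p$ and not a strictly smaller or larger set, which is precisely where the recent understanding of zero sets of positive-coefficient polynomials becomes indispensable.
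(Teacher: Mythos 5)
This survey does not actually prove the theorem: it only cites \cite{GZ}, remarking that the proof hinges on a potential-theoretic description (due to \cite{BE}) of the constraint set $\mathcal{P}$. Your sketch — the exact joint density of zeros via the change of variables from coefficients to (scale, zeros), the Gamma integral over the scale producing $\bigl(\prod_j(1-z_j)\bigr)^{-(n+1)}$ and hence the $U_\mu(1)$ term alongside the Vandermonde's $\tfrac12\S(\mu)$, with the real difficulty isolated in the lower bound over $\mathcal{P}$ — is precisely the strategy of the cited work, and you correctly identify that the delicate step is the approximation of measures in $\mathcal{P}$ by genuine zero sets of positive-coefficient polynomials, i.e.\ exactly the ingredient from \cite{BE} that the survey itself singles out.
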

The approach of \cite{GZ} exploits certain aspects of a potential theoretic description of the set $\mathcal{P}$ obtained by \cite{BE}. The universality results of \cite{BuZ} employ comparison techniques with appropriate ensembles already known to have an LDP. 

\section{Hole events and hole probabilities}
Hole events and hole probabilities have classically been a key object of interest in the study of point processes (a.k.a. particle systems). An important example of this is the well-known result that hole probabilities for a determinantal point process are given by certain Fredholm determinants related to its kernel (\cite[Chap. 6]{M}, \cite{ShT}).

To fix ideas, let $D_r$ denote the (open) disk (in dimension one an interval) of radius $r$, centered at the origin. The \emph{hole event}, denoted $\H_r$, is the event there are no points of the point process in $D_r$. The hole probability at radius $r$ is $\P[\H_r]$ which clearly decays to $0$ as $r \to \infty$. A very well-studied question in point process theory is the manner of decay of $\P[\H_r]$ (more precisely, its logarithmic asymptotics). Typically, the logarithm of the hole probability decays like a power law, whose exponent depends upon  the point process under consideration, and is thought to shed light on its `rigidity'. By rigidity in this setting, we envisage lattice-like behaviour. In particular, the heuristic is that faster the decay rate of the hole probability (that is, higher the exponent discussed above), stronger is the lattice-like behaviour. E.g., as we shall see below, the exponent for the Poisson process (in 2D) is 2. On the other hand, for compactly supported i.i.d. perturbations of the lattice $\Z^2$, the hole probability is 0 for large enough hole sizes, and hence, heuristically speaking, the above exponent is $\infty$.

In 2D, the simplest example of a homogeneous point process, namely the Poisson process (with unit intensity) gives a decay of $\P[\H_r]=\exp(-\mathrm{Area}(D_r))=\exp(-\pi r^2)$, so the decay exponent is 2. For the 2D Coulomb gas (inverse temperature $\b=2$, a.k.a. the Ginibre ensemble), it has been shown that this exponent is 4 (\cite{Sh1}, \cite[Chap. 7]{HKPV}), i.e. the hole probability exhibits the decay $r^{-4}\log \P[\H_r] \to -\frac14$ as $r \to \infty$. The larger exponent of the Ginibre process already attests to a stronger global spatial correlation compared to the Poisson process (the latter being characterized by the spatial independence of its points). For the application of LDP techniques to study hole probabilities for the Ginibre ensemble, see for example Section \ref{genhole}.

A key ingredient in the proof is the fact that the number of particles in $D_r$, for any determinantal point process is given by a sum of independent Bernoulli random variables (see, e.g., \cite[Chap. 4]{HKPV}). The parameters (success probabilities) of these Bernoullis are essentially the eigenvalues of the integral operator given by the kernel of the determinantal process restricted to $D_r$. An alternative approach for the Ginibre ensemble is to use the fact (first proved by Kostlan for the finite Ginibre ensemble) that the set of the squares of the moduli of the eigenvalues is distributed like a set of \emph{independent} Gamma random variables (see \cite[Theorem 4.7.3]{HKPV} also for the infinite ensemble).

In comparison, the study of hole probabilities for the zeros of the GEF introduces considerable challenges. The basic underlying reason for this is the absence of any tractable ``integrable'' structure in the GEF zeros process, as opposed to Poisson (spatial independence) or the Ginibre (determinantal). The study of the hole probability for the GEF has been undertaken in a series of papers, beginning with upper and lower bounds for $r^{-4}\log \P [\H_r]$ (\cite{STs-1}), and culminating in the proof of the fact (\cite{Ni1}) that 
\begin{equation}\label{gef-holeprob}
r^{-4}\log \P [\H_r] \to -\frac{e^2}{4}, \quad \mbox{as } r \to \infty. 
\end{equation} In fact, in \cite{Ni1} and subsequent works (\cite{Ni2}, \cite{Ni3}), hole probability asymptotics have been understood for a wide class of Gaussian entire functions. One can recover \eqref{gef-holeprob} using LDP techniques, and also study in more details the hole event, see Sections \ref{GEF-zero-cond-dist} and \ref{cond-dist-GEF}.
 
Thus, the exponents of the Ginibre and the GEF zeros process match. This leads to the interesting question regarding the comparison of the these two processes vis-a-vis their strength of correlations (or lattice-like behaviour). This has spawned an interesting collection of results. On one hand, there are comparison theorems for finite order correlation functions of the Ginibre and GEF zero ensembles (\cite{NS1}). On the other hand, there are recent results showing significant differences in the properties of their (spatially) conditional distributions (\cite{GP},\cite{GN}). A very interesting problem is to determine whether there is a natural invariant point process in the plane, whose hole probability decays qualitatively faster than decay rate of the Ginibre ensemble and the GEF zeros process.

We conclude this section by mentioning several other works related to hole probabilities, mostly of recent origin. An important instance is the study of hole probability asymptotics for zeros of a wide class of Gaussian analytic functions having a finite radius of convergence. This includes the well-studied hyperbolic GAFs (with general intensity $L>0$), whose domain of convergence is the unit disk. For the case $L=1$, the zero set has been shown to be a determinantal point process in \cite{PV}. In the same paper (\cite{PV}), the asymptotics of the hole probability (as $r \uparrow \infty$) has been worked out. In \cite{BNPS}, the hole probability asymptotics have been worked out for general $L$. In the process, a surprising discovery is made to the effect that the form of the asymptotics (including its dependence on $L$) depend crucially on whether $L$ is sub-critical ($0<L<1$), critical ($L=1$) or super-critical ($L>1$). Another interesting family of results involves gap probabilities (essentially, hole probabilities in 1D) for important families of 1D Gaussian processes, in particular connecting these asymptotics with simple properties of their spectral measures and so-called ``persistence probabilities'' (\cite{FeF},\cite{FeFS}, \cite{ABMO}, \cite{DM1}, \cite{DM2}). In \cite{Sh1} and \cite{Sh2} the author obtains fine quantitative estiamates on various aspects of the hole probability and the hole event for the Ginibre ensemble and related determinantal processes associated with higher Landau levels.

\section{Conditional distribution on the hole event}

In this section, we consider the following problem: what is the principal cause of a (rare) event of a hole of large radius? Having understood hole probabilities, the next natural question, therefore, is to try and understand the point process conditioned to have a hole of a large radius. This question, however, turns out to be a surprisingly difficult one - even in expectation. 

\subsection{The Ginibre ensemble}
Until recently, the only 2D point process for which this was understood was the Ginibre ensemble (\cite{JLM}; see \cite{Sh1} for a more recent study of finer aspects and more quantitative results). 
We state the result as (see the appendix in \cite{JLM}):
\begin{theorem}(\cite{JLM})
	\label{Ginibre-cond-intensity}
The conditional intensity $\rho^R$ (w.r.t. Lebesgue measure on $\C$) of the Ginibre eigenvalues on the event $\H_R$ is given by
\[\rho^R(z)= \frac1\pi e^{-|z|^2}\sum_{n=0}^{\infty} \frac{  |z|^{2n}}{\Gamma(n+1,R^2)}, \] for $|z| \ge R$, where we have used the incomplete Gamma function \[\Gamma(n+1,x)=\int_x^{\infty}e^{-t}t^n dt.\] 
\end{theorem}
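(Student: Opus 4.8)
The plan is to bypass the determinantal/Fredholm machinery entirely and instead exploit the rotation invariance of the Ginibre ensemble together with Kostlan's description of its radii. Recall (\cite[Theorem 4.7.3]{HKPV}) that the (unordered) set of squared moduli $\{|z_k|^2\}_{k \geq 1}$ of the infinite Ginibre eigenvalues has the same law as a family $\{\g_k\}_{k \geq 1}$ of \emph{independent} random variables, with $\g_k$ distributed as $\mathrm{Gamma}(k,1)$, i.e.\ with density $s^{k-1}e^{-s}/(k-1)!$ on $(0,\infty)$. Moreover, rotation invariance of the ensemble means the arguments of the points are i.i.d.\ uniform on $[0,2\pi)$ and independent of the moduli. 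Under this picture the hole event $\H_R$ --- no points in $D_R$ --- is precisely the event $\{\g_k \geq R^2 \text{ for all } k \geq 1\}$.

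The key simplification is that, because the $\g_k$ are independent and the constraint defining $\H_R$ factorizes over $k$, conditioning on $\H_R$ amounts to independently conditioning each $\g_k$ to lie in $[R^2,\infty)$. First I would compute $\P[\g_k \geq R^2] = \Gamma(k,R^2)/(k-1)!$, where, in the notation of the statement, $\Gamma(k,R^2) = \int_{R^2}^\infty e^{-t} t^{k-1}\,\d t$; hence the conditional density of $\g_k$ on the event $\H_R$ is $s^{k-1} e^{-s}/\Gamma(k,R^2)$ for $s \geq R^2$. Summing these densities over $k$ (the superposition being a disjoint union of single points, the interchange of expectation and sum is legitimate by Tonelli) gives the conditional intensity of the squared-modulus process on $[R^2,\infty)$, namely $\widetilde{\lambda}(s) = e^{-s}\sum_{n=0}^\infty s^{n}/\Gamma(n+1,R^2)$ after reindexing $n = k-1$.

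It then remains to transport this one-dimensional intensity back to an intensity on $\C$ with respect to Lebesgue measure. By rotation invariance the conditional intensity $\rho^R$ depends only on $r = |z|$, and equating the expected number of points in the infinitesimal annulus $\{r \leq |z| \leq r + \d r\}$ computed in two ways --- as $\rho^R(r)\,2\pi r\,\d r$ on the $\C$ side, and as $\widetilde{\lambda}(r^2)\,2r\,\d r$ on the squared-modulus side (since $s = r^2$, $\d s = 2r\,\d r$) --- yields $\rho^R(z) = \pi^{-1}\widetilde{\lambda}(|z|^2)$, which is exactly the claimed formula for $|z| \geq R$. As a sanity check, setting $R = 0$ uses $\Gamma(n+1,0) = n!$ and collapses the series to $\pi^{-1} e^{-|z|^2} e^{|z|^2} = \pi^{-1}$, the correct unconditional Ginibre intensity.

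The main obstacle, or rather the point demanding the most care, is the rigorous passage through the infinite ensemble: one must check that $\H_R$ has positive probability (it does, since $\P[\H_R] = \prod_{k \geq 1}\Gamma(k,R^2)/(k-1)!$ converges for every finite $R$) and that the Kostlan coupling, combined with independence, genuinely reduces the conditioning to a product measure on the $\g_k$. Once that reduction is in place, the remaining steps are elementary manipulations of Gamma densities together with a single Jacobian; the incomplete Gamma functions $\Gamma(n+1,R^2)$ in the denominators are the direct signature of truncating each radial variable $\g_k$ to $[R^2,\infty)$.
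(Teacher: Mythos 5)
Your argument is correct and is essentially the paper's own route: the theorem is quoted from the appendix of \cite{JLM}, and the survey explicitly identifies as the crucial ingredient exactly the fact you use, namely that the set of squared moduli of the Ginibre points is distributed as a family of \emph{independent} Gamma random variables (Kostlan; \cite[Theorem 4.7.3]{HKPV}), so that conditioning on $\H_R$ factorizes over the radii and the conditional intensity follows by summing the truncated Gamma densities and invoking rotational symmetry. One small correction: your side remark that the arguments of the points are i.i.d.\ uniform and independent of the moduli is false for the Ginibre ensemble (e.g.\ the two-point function $\pi^{-2}\left(1-e^{-|z-w|^2}\right)$ depends on $|z-w|$, not only on $|z|$ and $|w|$), but this is harmless because your transport step only needs rotation invariance of the conditional point process --- which does hold, since both the ensemble and the event $\H_R$ are rotation invariant --- to conclude that the conditional intensity is radial and hence determined by the annulus counts you computed.
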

In particular, for $r=R$ and $R\gg1$, we have $\rho(R)\sim \frac{1}{2}\pi R^2$ (see equation (2.13) in \cite{JLM}). This roughly corresponds to the appearance of a delta measure at the edge of the hole under appropriate renormalization. 

In \cite{Sh2}, Shirai described the complete behaviour of the conditional intensity of eigenvalues for the more general ``Ginibre-type'' ensembles. We mention here a version of Theorem 1.4 therein, adapted to our specific context and using our notations. For a Borel set $D\subset \C$,  let $\xi(D)$ denote the number of Ginibre eigenvalues in $D$. Let $A(x,y)$ denote the annulus $\{z \in \C : x \le |z| < y\}$.  With these notations, we can state:
\begin{theorem}[\cite{Sh2}]
 \label{Shirai}
 For $b>a>1$, we have \[\lim_{R \to \infty} \frac{1}{R^2}\E[\xi(A({\sqrt{a}R},{\sqrt{b}R}))|\H_R]=b-a\] and \[\lim_{R \to \infty} \frac{1}{R^2}\E[\xi(A(R,{\sqrt{b}R}))|\H_R]=b.\]
\end{theorem}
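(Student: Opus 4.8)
The plan is to work directly from the explicit conditional intensity $\rho^R$ of Theorem \ref{Ginibre-cond-intensity} and reduce both expected annular counts to a single scalar sum whose large-$R$ behaviour can be read off from standard Poisson concentration. Write $x=R^2$ and, for $\lambda>0$, set
\[ F_n(\lambda):=\frac{\Gamma(n+1,\lambda)}{n!}=\P[\,\mathrm{Pois}(\lambda)\le n\,], \]
i.e.\ the regularized upper incomplete Gamma function, which is also the distribution function of a Poisson$(\lambda)$ variable. Since $\Gamma(n+1,x)=n!\,F_n(x)$, the formula of Theorem \ref{Ginibre-cond-intensity} reads $\rho^R(z)=\pi^{-1}g(|z|^2)$ with $g(t):=\sum_{n\ge 0}\frac{e^{-t}t^n/n!}{F_n(x)}$. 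As $\rho^R$ depends on $z$ only through $|z|$, passing to polar coordinates and substituting $t=|z|^2$ gives, for any $1\le a<b$,
\[ \E\big[\xi(A(\sqrt a R,\sqrt b R))\mid \H_R\big]=\int_{aR^2}^{bR^2} g(t)\,\d t. \]
Using $\int_{ax}^{bx} e^{-t}t^n/n!\,\d t=F_n(ax)-F_n(bx)$ and interchanging sum and integral (all terms nonnegative, so Tonelli applies), I obtain the key identity
\[ \frac{1}{R^2}\,\E\big[\xi(A(\sqrt a R,\sqrt b R))\mid \H_R\big]=\frac{1}{x}\sum_{n\ge 0}c_n,\qquad c_n:=\frac{F_n(ax)-F_n(bx)}{F_n(x)}. \]
Since $x\le ax<bx$ and $\lambda\mapsto F_n(\lambda)$ is decreasing, one has $0\le c_n\le F_n(ax)/F_n(x)\le 1$ for every $n$; this uniform bound is the main safeguard against the ratios blowing up.

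It then remains to show $\tfrac1x\sum_n c_n\to b-a$ (resp.\ $\to b$ when $a=1$). The idea is that as $x\to\infty$ each $c_n$ is essentially the indicator that $n/x$ lies in the ``active'' range, so the sum simply counts admissible integers $n$. Writing $n=\alpha x$, the law of large numbers gives $F_n(ax)\to\indf_{\{\alpha>a\}}$, $F_n(bx)\to\indf_{\{\alpha>b\}}$, $F_n(x)\to\indf_{\{\alpha>1\}}$: for $a<\alpha<b$ one gets $c_n\to 1$; for $\alpha>b$ the two numerator terms cancel, $c_n\to 0$; for $1<\alpha<a$ the numerator vanishes while the denominator tends to $1$, $c_n\to 0$. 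In the delicate range $\alpha<1$ all three are small, but the two-sided bound $p_n(\lambda)\le F_n(\lambda)\le p_n(\lambda)/(1-n/\lambda)$ (valid for $n<\lambda$, with $p_n(\lambda)=e^{-\lambda}\lambda^n/n!$) yields $c_n\le F_n(ax)/F_n(x)\le (1-1/a)^{-1}e^{-x(a-1-\log a)}$, exponentially small because $a-1-\log a>0$.

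The first limit then follows by summing over the ranges $[0,(a-\eps)x]$, $[(a-\eps)x,(a+\eps)x]$, $[(a+\eps)x,(b-\eps)x]$, $[(b-\eps)x,(b+\eps)x]$, $[(b+\eps)x,\infty)$: the two transition windows contribute at most $2\eps x$ each via $c_n\le 1$, the outer ranges contribute $o(x)$ by Cram\'er's Poisson tail bounds, and the middle range contributes $(b-a-2\eps)x(1+o(1))$; letting $\eps\downarrow 0$ gives $b-a$. For $a=1$ the same decomposition applies, with one decisive change: on the lower range $\alpha<1$ the summand is now $c_n=1-F_n(bx)/F_n(x)$, and since $F_n(bx)/F_n(x)\le (1-1/b)^{-1}e^{-x(b-1-(1-\eps)\log b)}\to 0$ throughout $n\le(1-\eps)x$, one has $c_n\to 1$ rather than $0$ there. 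This contributes an extra $\approx x$ — precisely the screening layer of charge $R^2=\mathrm{Area}(D_R)/\pi$ that the hole expels and that concentrates on the circle $\{|z|=R\}$ — and together with the bulk $(b-1)x$ from $1<\alpha<b$ produces the limit $b$.

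The hard part will be the analysis on the lower-tail ranges $\alpha\le 1$ (and $1<\alpha<a$ in the first statement), where numerator and denominator vanish together and one must compare the Poisson large-deviation rates for means $x$, $ax$, $bx$ carefully. The uniform bound $c_n\in[0,1]$ renders the $O(\eps x)$ transition windows harmless, but identifying the correct limit (\,$0$ versus $1$\,) of these small-over-small ratios — hence the presence or absence of the extra unit of mass that distinguishes the two displayed limits — is the crux of the argument.
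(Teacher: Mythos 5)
Your proposal is correct, but note first that the paper itself contains no proof of Theorem \ref{Shirai}: the statement is quoted from Shirai \cite{Sh2}, and the surrounding text only indicates that the essential ingredient is Kostlan's theorem (the squared moduli of the infinite Ginibre points are distributed as independent Gamma random variables $\gamma_k\sim\Gamma(k,1)$, $k\ge 1$). Your route --- integrating the exact JLM conditional intensity of Theorem \ref{Ginibre-cond-intensity} over the annulus --- is the intensity-level form of that very computation: by independence, conditioning on $\H_R$ simply conditions each $\gamma_k$ to exceed $x=R^2$, so
\[
\E\big[\xi(A(\sqrt{a}R,\sqrt{b}R))\mid\H_R\big]=\sum_{k\ge1}\frac{\P[\gamma_k\in(ax,bx)]}{\P[\gamma_k>x]}=\sum_{k\ge1}\frac{\Gamma(k,ax)-\Gamma(k,bx)}{\Gamma(k,x)},
\]
which is exactly your sum $\sum_n c_n$ after the index shift $n=k-1$; indeed the JLM formula is itself derived from this independence, so the two approaches are equivalent in substance, with yours having the advantage of being self-contained given a formula the survey displays, and of making transparent that the limit is simply a count of indices $n$ with $n/R^2$ in the active window. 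On correctness of the asymptotic analysis: the ``hard part'' you flag at the end is in fact already settled by the two-sided bound $p_n(\lambda)\le F_n(\lambda)\le p_n(\lambda)/(1-n/\lambda)$ that you state --- for $a>1$ it gives the uniform exponential bound $c_n\le(1-1/a)^{-1}e^{-x(a-1-\log a)}$ on $n\le x$, and for $a=1$ it gives $1-c_n\le(1-1/b)^{-1}e^{-x(b-1-(1-\eps)\log b)}$ on $n\le(1-\eps)x$, both exponents being positive since $s-1-\log s>0$ for $s>1$; this is precisely what produces the extra unit of mass (the edge delta) distinguishing the two limits. The only points left implicit, all routine, are: (i) in the block $x<n\le(a-\eps)x$ one should invoke monotonicity of $F_n(x)$ in $n$ to get $F_n(x)\ge F_{\lceil x\rceil}(x)\to\tfrac12$, so the denominator is harmless while the Chernoff bound kills $F_n(ax)$; and (ii) in the upper tail $n>(b+\eps)x$ the bound $\sum_{n>(b+\eps)x}\P[\mathrm{Pois}(bx)>n]\le\E[(\mathrm{Pois}(bx)-(b+\eps)x)^+]$, which is exponentially small. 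With these spelled out, your argument is complete.
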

The discontinuity in the limit at $a=1$ captures the delta measure at the edge of the hole. The above limit aslo shows clearly that asymptotically, beyond the hole, the conditional intensity converges to the equilibrium intensity.

We point out that only the specific situation of a ``round'' hole  
was considered in this approach - that is, the hole consisted of no particles present in the disk of radius $R$, as opposed to, say a hole in the form of a particle-free square of side length $R$, with $R \to \infty$. This is crucial for obtaining the above results (as previously mentioned, the set of the squares of the moduli of the eigenvalues is distributed like a set of \emph{independent} Gamma random variables).

A crucial deficiency of this approach is the dependence on the above explicit description of the radii of the Ginibre points, which (or any substitute thereof) is not available for the other point processes. Even for the Ginibre ensemble, this approach depends crucially on the radial symmetry of the hole, and thus precludes any understanding of holes of any shape other than a disk.

\subsection{GEF zeros}\label{GEF-zero-cond-dist}
Very recently, progress has been achieved (\cite{AR},\cite{GN}) in understanding the conditional intensity around a large hole for point processes other than the Ginibre ensemble, and for non-circular holes. The new progress relies on a novel large deviation based approach. As an example, we state the following description for the (limiting) conditional intensity measure around a ``round'' hole for the GEF zeros process (\cite{GN}):

\begin{theorem}\label{conv-cond-meas-hole-GEF}
Let $m_\T$ be the uniform probability measure on the unit circle $\{|z|=1\}$, and put
\[  \mu^Z_0 =  e \cdot \d m_\T  + \indf_{\{|w| \ge \sqrt{e}\}} \frac{m }{\pi} ,\]
where 
Then, as $R \to \infty$, the scaled zero counting measure $[\zz_R]$ of the GEF, conditioned on having a hole in $\{|z| < R\}$, converges weakly to a limiting measure (in expectation, probability):
\[  \frac{1}{R^2} [\zz_R] \l( \frac{\cdot}{R} \r) \to  \mu^Z_0, \quad \mbox{as } R \to \infty. \]
\end{theorem}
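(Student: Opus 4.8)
The plan is to realize the hole event $\H_R$ as a large-deviation (rare) event for the zero set of a Weyl polynomial of suitably large degree, and then to read off the conditional intensity from the minimizer of the rate function $I^Z$ subject to the hole constraint. First I would reduce the statement for the (infinite) GEF to one for the Weyl polynomial $P_n$. Truncating the Taylor series at degree $n$ perturbs the zeros negligibly on the disk $\{|z|\le \sqrt{e}\,R\}$ once $\sqrt{n}$ is comfortably larger than $\sqrt{e}\,R$, while zeros sitting at radius $\gg \sqrt{e}\,R$ are essentially untouched by the hole and remain at the equilibrium density $1/\pi$. Thus the conditional law of the GEF zeros \emph{near} the hole agrees to leading order with that of the scaled zero set $\zz_n$ conditioned on having no points in $\{|w|<\rho\}$, where $\rho=R/\sqrt{n}$, and the outer bulk is handled separately as unperturbed equilibrium. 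This brings the problem squarely within the scope of the LDP for $\e(\zz_n)$ at rate $n^2$ with good rate function $I^Z$.

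Next I would invoke the LDP together with the conditioning (contraction) heuristic: conditionally on the hole event, the random empirical measure $\e(\zz_n)$ concentrates on the minimizer $\mu_\rho$ of $I^Z$ over the closed convex set of probability measures charging no mass to $\{|w|<\rho\}$. The core analytic task is to solve this constrained variational problem. Here lies the essential difference from the Ginibre case: there the rate function is \emph{linear} in the confinement, $\int|z|^2\,\d\mu$, so the obstacle problem is classical and produces only an edge delta with no depletion; for the GEF the non-local term $2\sup_{z}\big(U_\mu(z)-\tfrac12|z|^2\big)$ intervenes. Its first variation is governed by a maximizing measure $\nu$ supported on the argmax set of $U_\mu(z)-\tfrac12|z|^2$, so the stationarity (Frostman) condition reads $U_\nu-U_\mu=\text{const}$ on $\supp(\mu)$ and $\ge\text{const}$ off it, coupling $\mu$ to the a priori unknown maximizer. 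Solving this coupled system, I expect the three-region structure: the forbidden disk, an \emph{over-depleted} annulus in which $\mu_\rho$ still places no mass, a singular layer on the circle $\{|w|=\rho\}$ carrying the displaced charge, and the bulk further out at density $1/\pi$. The constants $e$ and $\sqrt{e}$ emerge from this analysis through the fact that the equilibrium value of $\sup_z\big(U_\mu(z)-\tfrac12|z|^2\big)$ equals $-\tfrac12$ (equivalently $\log\rho=-\tfrac12$ at the threshold radius $\rho=1/\sqrt{e}$, where one checks directly that the uniform measure on $\{|w|=\rho\}$ has $I^Z=\tfrac14$, reproducing the known exponent $e^2/4$); matching $-\tfrac12$ against the logarithmic potential of the displaced charge pins the outer radius of the depleted annulus at $\sqrt{e}$ and the displaced mass at $e$. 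Passing to blow-up coordinates $u=w/\rho$ (equivalently, rescaling the original picture by $R$ and normalizing by $R^2$) then turns $\mu_\rho$ into the claimed $\mu^Z_0 = e\,\d m_\T + \indf_{\{|w|\ge\sqrt{e}\}}\,m/\pi$.

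The hardest part will be upgrading the macroscopic LDP statement---which controls only logarithmic asymptotics of probabilities and concentration of the \emph{bulk} empirical measure---to the asserted convergence of the one-point conditional intensity, in expectation and in probability. Two issues must be handled. First, the singular component $e\,\d m_\T$ requires showing that the $\approx e R^2$ displaced zeros concentrate in an annulus of thickness $o(R)$ about $\{|z|=R\}$; the bare LDP locates the charge near the circle but does not by itself control this thickness, so a separate quantitative concentration estimate (for instance via the explicit joint density \eqref{weyl-joint-dens} together with an energy-comparison or second-moment argument) is needed to capture the delta as a genuine weak limit. Second, passing from weak concentration of $\e(\zz_n)$ to convergence of the conditional intensity $\E[\,\cdot\mid\H_R]$ demands uniform integrability and tightness of the conditional one-point function, for which I would again lean on \eqref{weyl-joint-dens} to produce a priori upper bounds on the conditional density both away from and across the circle. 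Once these estimates are combined with the variational description of $\mu_\rho$, they yield the weak convergence $\tfrac{1}{R^2}[\zz_R](\cdot/R)\to\mu^Z_0$ as $R\to\infty$.
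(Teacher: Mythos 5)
Your overall strategy --- approximating the GEF by Weyl polynomials, treating the hole as a rare event for the empirical measure of zeros, and identifying the conditional limit as the constrained minimizer of the rate functional (edge delta, forbidden annulus out to $\sqrt{e}$, bulk beyond) --- is the same as the paper's, and your discussion of the variational role of the $\sup$ term is on target. The execution, however, has a genuine gap at its central step: the ``conditioning (contraction) heuristic'' cannot be justified from the Zeitouni--Zelditch LDP. The constraint set $\{\mu \in \probmeas : \mu(\{|w|<\rho\})=0\}$ is closed but has \emph{empty interior} in the weak topology (any weak neighborhood of a measure vanishing on the disk contains measures that charge it), so the LDP lower bound is vacuous for hole events. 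Since every conditional quantity has $\P[\H_R]$ in its denominator, no concentration statement follows until one proves, by hand, a lower bound on $\P[\H_R]$ with the matching constant. This is exactly why Theorem \ref{gincond} carries balayage/exterior-ball hypotheses in the Ginibre case, and why for the GEF the paper resorts to a non-LDP argument: an explicit event on the Gaussian coefficients forcing the single term $\xi_{k_0}z^{k_0}/\sqrt{k_0!}$ to dominate on $\{|z|=R\}$, whence Rouch\'e's theorem produces the hole (Section \ref{low-bnd-for-GEF}, following \cite{STs-1}). Your proposal never addresses this and would stall here.

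The second gap is in the reduction from the GEF to polynomials. Taking the degree $n$ with $\sqrt{n}$ ``comfortably larger'' than $\sqrt{e}R$ is not enough: the comparison must survive conditioning on an event of probability $\exp(-cR^4)$, and to test weak convergence against arbitrary compactly supported functions one must let the scaling parameter $\a = N/R^2$ tend to infinity with $R$ (logarithmically, in \cite{GN}). Consequently the limiting functional $I^Z_\a$ itself moves with $R$, and --- as the paper states explicitly in Section \ref{cond-dist-GEF} --- no off-the-shelf LDP such as \cite{ZZ} applies. What replaces it is a quantitative deviation inequality for linear statistics, proved directly from the joint density \eqref{density} with explicit $O(N\log N)$ error terms and an effective convexity estimate for the functional (the analogue of Claim \ref{convexity-of-functional}, i.e.\ Claim 11 of \cite{GN}); an added technicality is that the degree $N$ must be chosen random, since the GEF has no deterministic zero count. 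Your closing paragraph correctly senses that extra quantitative estimates are needed for the edge delta and for intensity convergence, but in the paper's scheme these are not an ``upgrade'' of the LDP: the deviation inequality (applied with $R$-dependent test functions, as in Theorem \ref{num-zeros-in-annulus-GEF}) \emph{is} the whole argument, and its proof already presupposes the hole-probability lower bound discussed above.
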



We immediately point out a key difference with the conditional intensity for the Ginibre process: the appearance of a ``forbidden region'' immediately beyond the hole, where the expected density of zeros vanishes as $R \to \infty$. To the best of our knowledge, this is the first example of such a ``forbidden region'', and there is no instance, proven or conjectured, even in the physical literature that predicts such a phenomenon. See Figure \ref{GEF_zeros_hole} and Figure \ref{Ginib_hole} for a comparison between the distributions of the points conditioned on the hole event (with $R = 13$).

\begin{figure}
    \centering
    \begin{minipage}{0.45\textwidth}
        \centering
        \includegraphics[width=0.9\textwidth]{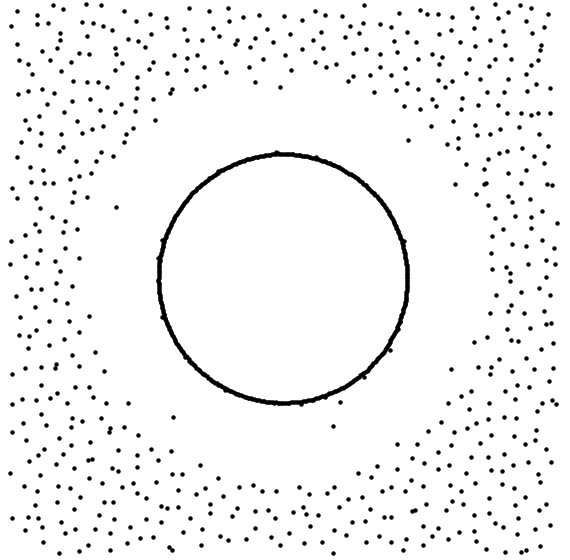} 
        \caption{GEF zeros, hole event}\label{GEF_zeros_hole}
    \end{minipage}\hfill
    \begin{minipage}{0.45\textwidth}
        \centering
        \includegraphics[width=0.9\textwidth]{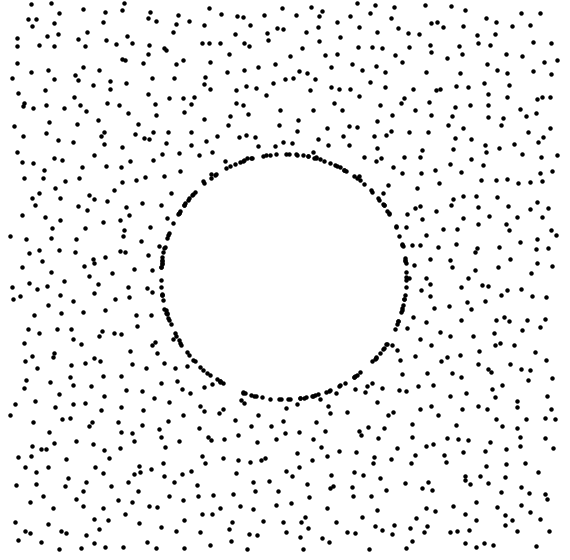} 
        \caption{Ginibre ensemble, hole event}\label{Ginib_hole}
    \end{minipage}
\end{figure}

The paper \cite{GN}, in fact, provides quantitative estimates on the typical number of zeros in the annulus between $R$ and $\sqrt{e} R$. In what follows, we denote by $N_{\f}(A)$ the number of zeros of the GEF in the set $A \subset \C$.
\begin{theorem}\label{num-zeros-in-annulus-GEF}
Suppose $R$ is sufficiently large, $\eps \in (R^{-2},1)$, that $\gamma \in (1+\frac{1}{2}\log \frac{1}{\eps} (\log R)^{-1} ,2]$, and consider the annulus \[A(R(1+\eps), \sqrt{e}R(1-\eps)) = \{ z \in \C : R(1+\eps) \le |z| \le \sqrt{e} R (1-\eps) \}. \]
We have  \[  \P\l[ N_{\f} (A(R(1+\eps), \sqrt{e}R(1-\eps))) \ge R^{\gamma}    | \H_R   \r]  \le \exp(-C \eps R^{2\gamma}),  \] where $C>0$ is a numerical constant.
\end{theorem}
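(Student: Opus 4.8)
The plan is to write the conditional probability as a ratio and to exhibit the claimed exponential gap between its numerator and denominator. Writing $A:=A(R(1+\eps),\sqrt{e}R(1-\eps))$, we have
\[ \P\l[N_\f(A)\ge R^\g \mid \H_R\r] = \frac{\P\l[\{N_\f(A)\ge R^\g\}\cap\H_R\r]}{\P[\H_R]}. \]
For the denominator I would use the sharp lower bound $\P[\H_R]\ge\exp\l(-(\tfrac{e^2}{4}+o(1))R^4\r)$ underlying \eqref{gef-holeprob}, realized by the explicit strategy of damping the Gaussian coefficients $\xi_k$ of $\f$ down to size $\sqrt{k!}/R^k$ for all $k\lesssim eR^2$: the Gaussian cost $\prod_k(\sqrt{k!}/R^k)^2=\exp(-(\tfrac{e^2}{4}+o(1))R^4)$ reproduces the hole exponent and, decisively, produces a configuration whose scaled zero profile is the optimal measure $\mu^Z_0$ of Theorem~\ref{conv-cond-meas-hole-GEF} --- all $\approx eR^2$ displaced zeros sitting in the edge layer $e\cdot\d m_\T$ on $\{|z|=R\}$, with the forbidden annulus empty.

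For the numerator I would work with the joint density \eqref{weyl-joint-dens} of the (scaled) zeros of the degree-$n$ Weyl approximant, with the identification $n\asymp R^2$ so that the rate $n^2\asymp R^4$ matches the hole exponent. Taking logarithms, the Vandermonde factor $|\D|^2$ contributes $+n^2\S(\mu)$ (the logarithmic energy of the empirical measure $\mu$), while the normalizing integral $\l(\tfrac n\pi\int_\C|Q_{\zz_n}|^2e^{-n|w|^2}\d m\r)^{-(n+1)}$ contributes $-2n^2\sup_{w}\l(U_\mu(w)-\tfrac12|w|^2\r)+o(n^2)$ by Laplace's method; thus $\log\rho\approx -n^2 I^Z(\mu)$, and the conditional probability is governed, to leading exponential order, by the increase $I^Z(\mu_E)-I^Z(\mu^Z_0)$ in the rate functional when one passes from $\mu^Z_0$ to a measure $\mu_E$ that realizes the event.

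The heart of the argument is the computation of this increase. Realizing $N\ge R^\g$ zeros in $A$ forces, on top of the hole, a relocation of a scaled mass $m\asymp N/n$ of the edge layer outward into the gap; the cheapest such relocation moves it just across the inner boundary, and is captured by the signed perturbation $\nu=m(\sigma_{1+\eps}-\sigma_1)$, where $\sigma_r$ is the uniform probability measure on $\{|z|=r\}$ (in scaled coordinates). Its quadratic (logarithmic-energy) cost is explicit:
\[ -\S(\nu)=m^2\l(2\langle\sigma_1,\sigma_{1+\eps}\rangle-\S(\sigma_1)-\S(\sigma_{1+\eps})\r)=m^2\log(1+\eps)\approx\eps\,m^2, \]
using $U_{\sigma_r}(z)=\log\max(|z|,r)$, so that $\S(\sigma_r)=\log r$ and $\langle\sigma_1,\sigma_{1+\eps}\rangle=\log(1+\eps)$. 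The remaining (linear) contributions --- the cross term $\langle\mu^Z_0,\nu\rangle$ together with the variation of the $\sup$-functional --- assemble, by the Euler--Lagrange characterization of the constrained minimizer $\mu^Z_0$, into $(\psi(1+\eps)-c)\,m\ge0$, where $\psi$ is the effective potential and $\psi-c>0$ strictly throughout the forbidden gap; being non-negative, they only strengthen the bound and may be discarded. Hence $I^Z(\mu_E)-I^Z(\mu^Z_0)\gtrsim\eps\,m^2$, giving $\P[N_\f(A)\ge R^\g\mid\H_R]\le\exp(-c\,n^2\eps\,m^2)=\exp(-c\,\eps N^2)=\exp(-c\,\eps R^{2\g})$.

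The main obstacle is the passage from this rate-functional heuristic to a genuine finite-$R$ inequality. One must control the discretization error between the actual $N$-point configuration and the idealized layers $\sigma_r$, the Stirling corrections in $|\D|^2$ and in the normalizing integral, and the intrinsic fluctuations of the zero count and of the circular averages of $\log|\f|$ (which I would access through Jensen's formula, relating $N_\f(D_\rho)$ to $\tfrac{1}{2\pi}\int\log|\f(\rho e^{i\t})|\d\t$). All of these error terms are of order $R^2$, and this is precisely what forces the hypothesis $\g>1+\tfrac12\log\tfrac1\eps/\log R$: it is equivalent to $\eps R^{2\g}>R^2$, guaranteeing that the quadratic energy gain $\eps R^{2\g}$ dominates the $O(R^2)$ errors and that the numerator is genuinely smaller than the denominator by the full factor $\exp(-C\eps R^{2\g})$.
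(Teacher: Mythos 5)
Your overall architecture does match the paper's: the numerator/denominator split, the Sodin--Tsirelson-type coefficient-damping lower bound for $\P[\H_R]$ (this is exactly the Rouch\'e-theorem construction the paper invokes in Section \ref{low-bnd-for-GEF}), the rate-functional upper bound for the numerator via the Weyl joint density \eqref{weyl-joint-dens}, and the observation that the hypothesis on $\gamma$ is precisely $\eps R^{2\gamma}>R^2$, i.e.\ that the energy gain must beat the $O(R^2)$-type errors. However, there is a genuine gap at what you yourself call the heart of the argument. An upper bound on $\P[N_\f(A)\ge R^\gamma\mid\H_R]$ requires a lower bound on $I^Z(\mu_E)-I^Z(\mu^Z_0)$ that holds \emph{uniformly over all} admissible measures $\mu_E$ (hole plus scaled mass $\ge m$ in the annulus), i.e.\ a bound on the infimum; you instead compute the cost of one particular perturbation $\nu=m(\sigma_{1+\eps}-\sigma_1)$ and assert, without proof, that it is the cheapest. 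That assertion is exactly what must be established, and the paper does not establish it by identifying the minimizer: it bypasses the question with an ``effective convexity'' inequality (Claim 11 of \cite{GN}, whose Ginibre counterpart is Claim \ref{convexity-of-functional} here). Concretely, convexity of the functional and minimality of $\mu^Z_0$ over the convex hole-constraint set make the first-order term nonnegative, leaving
\[
I^Z(\mu_E)-I^Z(\mu^Z_0)\;\ge\;-\Sigma\left(\mu_E-\mu^Z_0\right)\;\ge\;\frac{2\pi}{\fD(\varphi)}\left(\int\varphi\,\d(\mu_E-\mu^Z_0)\right)^2 ,
\]
where the second step is a Fourier/Cauchy--Schwarz inequality valid for \emph{every} mass-zero signed measure and every test function $\varphi$. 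Applied with a smoothed indicator of the annulus, which has $\fD(\varphi)\asymp 1/\eps$ and for which $\int\varphi\,\d\mu^Z_0=0$ while $\int\varphi\,\d\mu_E\ge m$, this yields the $\eps m^2$ bound for all admissible $\mu_E$ simultaneously --- which is why the paper's route is a conditional deviation inequality for linear statistics with an $R$-dependent test function (Section \ref{cond-dist-GEF}). Your computation confirms the answer has the right order, but without this inequality (or an actual optimality proof for your $\nu$, say by symmetrization) the numerator bound does not follow.

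A secondary but real issue is the approximation step. With degree $n\asymp R^2$ the Weyl polynomial does not adequately model the GEF zeros out to radius $\sqrt e R$ at the required exponential accuracy: the paper's scheme needs $N=\lfloor \a R^2\rfloor$ with $\a\to\infty$ logarithmically in $R$ (which is precisely why it cannot quote off-the-shelf LDPs such as \cite{ZZ}), and even then the value of $N$ must be taken random; the resulting error term is $O(R^2\log^2 R)$ rather than the $O(R^2)$ you assert. This does not change the shape of the final bound, but it is where the genuinely technical work lives, and your blanket claim that all discretization, Stirling, and Jensen-formula errors are $O(R^2)$ is unsupported.
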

The proofs of Theorem \ref{conv-cond-meas-hole-GEF} and Theorem \ref{num-zeros-in-annulus-GEF} are based on a certain deviations inequality for linear statistics of the GEF zeros. We provide more details in Section \ref{cond-dist-GEF}

It is an interesting problem to establish fine asymptotics, on the lines of Theorem \ref{Ginibre-cond-intensity}, for the GEF zeros process. This would involve, in the best case scenario, an explicit expression for the conditional density function. At a more modest level it can also envisage asymptotics of the conditional intensity function in various regimes, an important example of which is the rate of blowup of this function at the edge of the hole. It is also of interest to find the asymptotics of the (conditional) expected value of $N_F(A(R(1+\eps), \sqrt{e}R(1-\eps)) )$, as $R\to\infty$.


\subsection{Ginibre Ensemble: General holes and weighted Fekete points}\label{genhole}

Non-circular holes for the Ginibre ensemble were recently studied in the paper \cite{AR}. To this end, we recall the functional 
\[I(\mu)=\int |z|^2 \, \d \mu(z) - \iint \log |z-w| \, \d \mu(z) \d \mu(w) - \frac{3}{4}, \quad \mu \in \probmeas. \]
We mention that it is known that the uniform measure on the unit disk $\mu_0$ is the unique global minimizer for the above functional (and in fact, $I(\mu_0)=0$). We denote by $D$ the open unit disk. 
\begin{theorem}[\cite{AR}]
	\label{gincond}
	Let $\G_n$ denote the eigenvalues of the Ginbre emsemble of order $n$. Let $U \subset D$ be a subset satisfying at least one of the following conditions:
	\begin{itemize}
		\item (Balayage condition) There exists a sequence of open sets $U_n$ such that $\ol{U} \subset U_n \subseteq D$ for all $n$, and the balayage measure $\nu_n$ on $\partial U_n$ converges weakly to the balayage measure on $\partial U$.
		\item (Exterior ball condition) There exists $\eps >0$ such that for every $z \in \partial U$ there exists a $\eta \in U^\c$ such that 
		\[B(\eta, \eps) \subset  U^\c \text{ and } |z-\eta|=\eps. \]
	\end{itemize}
Then we have \[ \lim_{n \to \infty} \frac{1}{n^2} \log \P[\G_n(\sqrt{n} U)=0]=- \inf_{\mu \in \moc: \mu(U)=0} I(\mu). \]
\end{theorem}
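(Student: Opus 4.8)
The plan is to read the hole event as a large deviation event for the empirical measure, and then to analyze the constrained variational problem that results. Writing $\mu_n := \e(\G_n/\sqrt n)$ for the empirical measure of the rescaled eigenvalues, the event $\{\G_n(\sqrt n U)=0\}$ is exactly $\{\mu_n(U)=0\}$. The crucial observation is that the functional $I$ in the statement coincides with the good rate function $I^{[\C]}$ of the Ginibre LDP (from \cite{BZ}, \cite{HP1}). Thus, setting $A := \{\mu \in \moc : \mu(U)=0\}$ and $V(U) := \inf_{\mu \in A} I(\mu)$, the entire problem reduces to showing that $A$ behaves like an $I$-continuity set, i.e. that the LDP upper and lower bounds both produce the value $-V(U)$. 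Since $I$ is a good rate function, standard potential theory furnishes a minimizer $\mu^\ast$, and one first identifies it as the solution of an obstacle/balayage problem: $\mu^\ast$ is the circular law (uniform on $D$) swept out of $U$, namely uniform on $D\setminus U$ together with the balayage onto $\partial U$ of the mass that the circular law assigned to $U$. This is the potential-theoretic content behind the ``weighted Fekete points'' of the title, and it is the analogue of the edge delta-measure seen for round holes in Theorem \ref{Ginibre-cond-intensity}.

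For the upper bound I would apply the LDP upper bound to a closed set containing the event. Because $\mu\mapsto \mu(U^o)$ is lower semicontinuous, $\{\mu(U^o)=0\}$ is closed and contains $A$, so $\limsup_n n^{-2}\log \P[\mu_n(U)=0] \le -\inf\{I(\mu): \mu(U^o)=0\}$. The point requiring work is that relaxing the constraint from $\mu(U)=0$ to $\mu(U^o)=0$ could lower the infimum, the gap being carried entirely by measures placing mass on $\partial U$. One therefore has to verify that $\inf\{I:\mu(U^o)=0\}=V(U)$, i.e. that moving boundary mass off $\partial U$ costs no energy in the limit, which is again a regularity statement about $\partial U$.

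The lower bound is where the real difficulty lies, and it is the reason for the two geometric hypotheses. The naive route — invoking the open-set LDP lower bound at $\mu^\ast$ — fails, because $\mu^\ast$ charges $\partial U$ and hence lies on the topological boundary of $A$, not in its interior $A^o$: every weak neighborhood of $\mu^\ast$ contains measures that leak mass into $U$. One must instead produce, for each $\eps>0$, a measure $\nu_\eps \in A^o$ (that is, one avoiding an entire open neighborhood of $U$) with $I(\nu_\eps) \le V(U)+o(1)$, so that $\liminf_n n^{-2}\log\P[\mu_n(U)=0] \ge -I(\nu_\eps)$, and then let $\eps \downarrow 0$. This is precisely a continuity statement for the constrained energy $V(\cdot)$ under enlarging the forbidden region, and the two conditions furnish it by controlling the balayage: under the balayage condition the equilibrium measures associated to the shrinking exterior sets $U_n \supseteq \ol U$ have boundary balayage measures converging to that of $\partial U$, forcing $I(\mu^\ast_{U_n}) \to V(U)$; under the exterior ball condition the points of $\partial U$ are regular for the Dirichlet problem, which lets one push the $\partial U$-mass a distance $\eps$ into $U^\c$ with a negligible change of logarithmic energy. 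In either case one can then discretize the approximating measure by its weighted Fekete points, which sit strictly outside $U$, to certify that configurations realizing $\nu_\eps$ occur with Gaussian probability $\exp(-n^2(V(U)+o(1)))$.

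The main obstacle, then, is purely potential-theoretic: establishing the continuity of the constrained equilibrium energy $V(U)=\inf_{\mu(U)=0}I(\mu)$ as the hole is perturbed, equivalently that the balayage of the circular law out of $U$ is stable under the approximations licensed by the two hypotheses. Everything else — the translation to empirical measures, the goodness of $I$, and the soft LDP bounds — is routine once this stability is in hand.
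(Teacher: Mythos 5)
Your overall architecture --- recasting the hole as the event $\{\mu_n(U)=0\}$ for the empirical measure, identifying $I$ with the Ben Arous--Zeitouni rate function, solving the constrained variational problem by balayage of the circular law out of $U$, and using the two geometric hypotheses to stabilize the constrained energy --- is the same as in \cite{AR}, and your upper bound (pass to the closed set $\{\mu(U^o)=0\}$, then reconcile the two infima) is sound. But the pivotal step of your lower bound fails as stated. You claim that a measure $\nu_\eps$ ``avoiding an entire open neighborhood of $U$'' lies in the interior $A^o$ of $A=\{\mu\in\moc:\mu(U)=0\}$, so that the open-set LDP lower bound gives $\liminf_n n^{-2}\log\P[\mu_n(U)=0]\ge -I(\nu_\eps)$. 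In the weak topology $A^o$ is \emph{empty}: for any $\nu\in A$, any $z_0\in U$ and any $\delta>0$, the measure $(1-\delta)\nu+\delta\,\del_{z_0}$ violates the constraint yet lies in any fixed weak neighborhood of $\nu$ once $\delta$ is small enough. Keeping the support of $\nu_\eps$ far from $U$ does not make it an interior point --- no element of $A$ is one --- so the abstract LDP lower bound applied to this event reads $\liminf \ge -\inf_{A^o}I=-\infty$ and is vacuous.

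This is exactly why the proof in \cite{AR} (and the analogous lower-bound arguments sketched in Section \ref{lower-bnd-JLM} and Section \ref{genhole} of this paper) never invokes the LDP lower bound: the weighted Fekete points are not a ``certification'' appended to the abstract bound, they \emph{are} the lower bound. One takes an $n$-point weighted Fekete configuration for the constrained problem (its points avoid $U$ and are well separated), surrounds each point by a disk of radius polynomially small in $n$, chosen so that the perturbed configurations still avoid $\sqrt{n}\,U$, and integrates the explicit joint density \eqref{ginden} over this product of disks in $\C^n$. The Vandermonde and external-field terms are controlled by the Fekete property (equivalently, by an approximation of potentials as in \eqref{min-meas-approx}), and the volume of the product of disks only costs $\exp(-Cn\log n)$, which is negligible at scale $n^2$; this yields $\P[\G_n(\sqrt{n}\,U)=0]\ge \exp\l(-n^2(V(U)+o(1))\r)$ directly. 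Your potential-theoretic work (stability of the balayage/constrained energy under the two hypotheses) is then what shows the Fekete energies converge to $V(U)$. The repair is therefore local --- replace the appeal to the LDP lower bound by direct integration over good configurations --- but without it your proof does not close.
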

Note that all convex domains satisfy the exterior ball condition.

In the case of the exterior ball condition, the proof relies on the use of \emph{weighted Fekete points}. Let $E \subset \C$ be a (nice) closed subset, and put
$$
\delta_n(E) = \sup_{z_1, \dots, z_n \in E} \left\{ \prod_{j<k} |z_j - z_k| \exp\left(-\frac12 |z_j|^2\right) \exp\left(-\frac12 |z_k|^2\right) \right\}^{\frac{2}{n(n-1)}}.
$$
A set $\cF_n = \{z_1^\star, \dots, z_n^\star\} \subset E$ is said to be an $n$-th weighted Fekete set if the points in $\cF_n$ attain the supremum $\delta_n(E)$ (such a set always exists, but is not necessarily unique). It is known (see \cite{SaT}) that the sequence $\{\delta_n(E)\}$ is decreasing, and furthermore
$$
\lim_{n\to \infty} \log \delta_n(E) = - \inf_{\mu(E^\c) = 0} \left[ \int |z|^2 \, \d \mu(z) - \iint \log |z-w| \, \d \mu(z) \right],
$$
where the infimum is over all probability measures $\mu$ such that $\mu(E^\c) = 0$. Heuristically, the Fekete points provide the most likely configuration of particles, conditioned on having no particles in the set $E^\c$.

For the infinite Ginibre ensemble, we have
\begin{theorem}{\cite{AR}}
	\label{hpgin}
	Let $\G_\infty$ denote the infinite Ginibre ensemble, and let $U \subset D$ be an open set satisfying either the balayage condition or the exterior ball condition as in the statement of Theorem \ref{gincond}. Then we have the hole probability asymptotics
	\[ \lim_{r \to \infty} \frac{1}{r^4} \log \P[\G_\infty (r U)=0] = - \inf_{\mu \in \moc: \mu(U)=0} I(\mu). \]
\end{theorem}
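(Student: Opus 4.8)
The plan is to transfer the finite-$n$ statement of Theorem \ref{gincond} to the infinite ensemble by matching scales through $n\approx r^2$, for which the hole $rU$ coincides with the finite hole $\sqrt n U$ and the rate $n^2$ with $r^4$. Write $\mathcal I(U):=\inf_{\mu\in\moc:\,\mu(U)=0}I(\mu)$, so that Theorem \ref{gincond} reads $n^{-2}\log\P[\G_n(\sqrt n U)=0]\to-\mathcal I(U)$. The key structural input is that the correlation kernel $K_\infty(z,w)=e^{z\bar w}=\sum_{k\ge0}\phi_k(z)\overline{\phi_k(w)}$, with $\phi_k(z)=z^k/\sqrt{k!}$ orthonormal in $L^2(\mu)$, is the orthogonal projection onto the Bargmann--Fock space. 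Splitting this space as $\mathrm{span}\{\phi_0,\dots,\phi_{n-1}\}\oplus\overline{\mathrm{span}}\{\phi_k:k\ge n\}$ writes $K_\infty$ as a sum of two mutually orthogonal projections; the first is precisely the finite Ginibre kernel, and the superposition theorem for determinantal processes with projection kernels (\cite[Ch.~4]{HKPV}) yields the distributional identity $\G_\infty \stackrel{d}{=}\G_n\sqcup\G_n'$, an \emph{independent} union of the finite ensemble $\G_n$ and the ``high-mode'' process $\G_n'$ whose intensity with respect to $\mu$ is $\sum_{k\ge n}|z|^{2k}/k!$. Consequently, for every $n$,
\[
\P[\G_\infty(rU)=0]=\P[\G_n(rU)=0]\cdot\P[\G_n'(rU)=0].
\]

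For the upper bound I would take $n=\lceil r^2\rceil$ and simply discard the second factor, which is at most $1$. Since $U\subset D$, the hole satisfies $rU=\sqrt n\,(\rho U)$ with $\rho=r/\sqrt n\le 1$ and $\rho\to 1$, so Theorem \ref{gincond} (applied to the dilated region $\rho U$, which still satisfies the hypotheses) gives $n^{-2}\log\P[\G_n(rU)=0]\to-\mathcal I(\rho U)$. Because $n\sim r^2$, this yields $\varlimsup_{r\to\infty}r^{-4}\log\P[\G_\infty(rU)=0]\le-\varliminf_{r}\mathcal I(\rho U)$.

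For the lower bound the point is to choose $n$ large enough that $\G_n'$ has no points in $rU$ with overwhelming probability. Fix $\delta>0$ and set $n=\lceil(1+\delta)r^2\rceil$. For $z\in rU\subset D_r$ one has $|z|^2<r^2\le n/(1+\delta)$, so a Chernoff bound for Poisson tails gives $\sum_{k\ge n}|z|^{2k}e^{-|z|^2}/k!=\P[\mathrm{Pois}(|z|^2)\ge n]\le e^{-c\delta^2 r^2}$ uniformly on $rU$. Hence $\E[\G_n'(rU)]=\pi^{-1}\int_{rU}\P[\mathrm{Pois}(|z|^2)\ge n]\,\d m(z)\le r^2e^{-c\delta^2 r^2}$, and the first-moment bound gives $\log\P[\G_n'(rU)=0]\ge-2r^2e^{-c\delta^2r^2}=o(r^4)$. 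The first factor is again handled by Theorem \ref{gincond}: here $rU=\sqrt n\,(c_\delta U)$ with $c_\delta=r/\sqrt n\to(1+\delta)^{-1/2}$, so $n^{-2}\log\P[\G_n(rU)=0]\to-\mathcal I(c_\delta U)$ while $n^2\sim(1+\delta)^2r^4$. Combining the two factors, $\varliminf_{r\to\infty}r^{-4}\log\P[\G_\infty(rU)=0]\ge-(1+\delta)^2\,\mathcal I\big((1+\delta)^{-1/2}U\big)$, and letting $\delta\downarrow0$ reduces everything to the behaviour of $\mathcal I(cU)$ as $c\uparrow1$.

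Both bounds therefore collapse onto $-\mathcal I(U)$ provided the constrained logarithmic energy $c\mapsto\mathcal I(cU)$ is left-continuous at $c=1$; this is the step I expect to be the main obstacle. It requires, first, that the dilated regions $cU$ (with $c<1$ near $1$) still satisfy the balayage or exterior-ball hypothesis of Theorem \ref{gincond} --- which is plausible since dilation by $c<1$ keeps $cU\subset D$ and merely rescales the exterior balls --- and, second, a genuine potential-theoretic continuity statement for the value of the constrained equilibrium problem as the forbidden region is dilated. The structural conditions on $U$ are precisely what should make the constrained minimizer stable under such perturbations, and establishing this continuity (via balayage and the regularity of the associated obstacle problem, cf.\ the weighted-Fekete description preceding Theorem \ref{hpgin}) is the crux of the argument; the determinantal decomposition and the Poisson tail estimate above are then routine.
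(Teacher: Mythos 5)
The central structural claim of your proposal is false, and it is precisely the step your lower bound rests on. A determinantal process whose kernel is a sum $P_1+P_2$ of two mutually orthogonal projections is \emph{not} the independent superposition of the two projection processes; there is no such superposition theorem in \cite[Ch.~4]{HKPV}. What that chapter provides is a \emph{mixture} decomposition (a DPP with Hermitian kernel is a projection DPP with Bernoulli-selected eigenfunctions) and Kostlan's theorem on the moduli, which is intrinsically radial and cannot see a non-circular hole $rU$. Concretely, the two-point function of the independent union $\G_n\sqcup\G_n'$ is $K(x,x)K(y,y)-|K_1(x,y)|^2-|K_2(x,y)|^2$, whereas that of the DPP with kernel $K=K_1+K_2$ is $K(x,x)K(y,y)-|K_1(x,y)+K_2(x,y)|^2$; the cross term $\mathrm{Re}\bigl(K_1(z,w)\overline{K_2(z,w)}\bigr)$ does not vanish for the Ginibre splitting (take $z\bar w>0$), so $\G_\infty$ and $\G_n\sqcup\G_n'$ have different distributions and the factorization $\P[\G_\infty(rU)=0]=\P[\G_n(rU)=0]\,\P[\G_n'(rU)=0]$ is unavailable. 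Worse, the inequality that \emph{is} true goes the wrong way for you: writing $S,T$ for the restrictions of $K_n$ and of the tail kernel to $rU$, one has $\det(I-S-T)\le\det(I-S)\det(I-T)$ (a standard monotonicity argument for Fredholm determinants, reflecting negative correlation), i.e.\ the true hole probability is \emph{at most} the product you posit. Your upper bound survives, but only because $S\le S+T$ gives $\det(I-S-T)\le\det(I-S)$ directly --- independence is never needed there. Your lower bound is genuinely broken, and it cannot be patched by additive perturbation of Fredholm determinants either: the trace of the tail kernel is only $O\bigl(r^2\,\P[\mathrm{Pois}(r^2)\ge n]\bigr)=e^{-c(\delta)r^2}$, which is astronomically larger than the quantity $e^{-cr^4}$ you are trying to preserve.

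A workable finite-to-infinite comparison needs multiplicative, eigenvalue-by-eigenvalue control, which is roughly what \cite{AR} supplies (the survey itself gives no proof; it only quotes \cite{AR}). The key extra ingredient is a uniform spectral gap: by domain monotonicity and the exact computation on the disk $D_r$ (eigenvalues $\P[\mathrm{Pois}(r^2)\ge k+1]$), every eigenvalue $\lambda$ of $K_\infty$ restricted to $rU\subset D_r$ satisfies $1-\lambda\ge e^{-r^2}$. Choosing $n=\alpha r^2$ with $\alpha$ a \emph{large fixed constant} --- large enough that $\P[\mathrm{Pois}(r^2)\ge \alpha r^2]\ll e^{-r^2}$ --- Weyl's inequality then converts the trace bound on the tail kernel into $\log\P[\G_\infty(rU)=0]\ge\log\P[\G_n(rU)=0]-o(1)$. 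Note this forces the droplet-to-hole ratio $\alpha$ to stay bounded away from $1$, so the dilation issue you flagged is not a removable technicality that disappears as $\delta\downarrow 0$: one genuinely needs the scaling relation $\inf\{I(\mu):\mu(cU)=0\}=c^4\inf\{I(\mu):\mu(U)=0\}$ (or, equivalently, the finite-$n$ LDP analysis with a general scaling parameter, as in the $I_\alpha$ framework of Section \ref{beta-ans}). Indeed, since the infinite-volume rate automatically satisfies this scaling, Theorem \ref{hpgin} is essentially equivalent to Theorem \ref{gincond} \emph{plus} that potential-theoretic identity; a proof that does not establish it cannot be complete. Relatedly, applying Theorem \ref{gincond} to the $r$-dependent dilates $\rho U$ with $\rho=r/\sqrt{n}$ is not a legitimate direct application of a fixed-set limit theorem and requires the same continuity-in-dilation input.
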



\section{Large fluctuations in the number of points and the Jancovici-Lebowitz-Manificat law}\label{largefluc}

Closely related to the hole event are the phenomena of ``deficiency'' and ``overcrowding'' in the number of particles, which entail that the number of particles in $D_r$ is very far from its typical value of about $r^2$ particles (with the standard normalization for the Ginibre ensemble and the GEF zeros). This has been extensively studied both for the Ginibre ensemble and the GEF zeros (\cite{JLM}, \cite{Kr1}, \cite{NSV1}), with the discovery that the fluctuations in both cases obey the Jancovici-Lebowitz-Manificat law (in short, the JLM law), that was first introduced in the context of large charge fluctuations for the 2D Coulomb gas (\cite{JLM}, see Conjecture \ref{JLMpred} for the statement).

We start with a special case of the JLM law. Denote by $n(R)$ the number of particles of the Ginibre ensemble inside the disk $\{|z| < R\}$. Here we consider the event $\{ n(R) = pR^2 \}$, where $p\ge 0$ ($p \ne 1$) is fixed. Shirai (\cite{Sh1}) proved

\begin{equation}\label{ShiraiLDP}
\lim_{R\to \infty} R^{-4} \log \P \left[ n(R) = pR^2 \right]= - \frac{1}{4} \left|2p^2 \log p - (p-1)(3p-1)\right|.
\end{equation}

In \cite{GN}, the authors derive the corresponding result for the GEF zeros, for a complete statement, we direct the reader to \cite{GN}.

\subsection{The Jancovici-Lebowitz-Manificat law}\label{JLMlawsubsec}
Compared with \eqref{ShiraiLDP}, one can certainly consider a wider range of fluctuations in the number of particles, and also examine other ensembles. We find it rather surprising, that the asymptotic decay of the probability of large fluctuations is described by a common `law', both for the Ginibre ensemble and the GEF zeros process (this law also appears in other ensembles, such as certain randomly perturbed lattices, see \cite{NSV1}).

\subsubsection{Finite $\beta$-Ginibre ensemble}

The physical paper \cite{JLM} by Jancovici, Lebowitz, and Manificat considers large charge fluctuations for a one-component Coulomb system of particles of one sign embedded into a uniform background of the opposite sign. This system is mathematically equivalent to the finite two-dimensional $\beta$-Ginibre ensemble, which consists of $N$ particles in the complex plane $\C$, whose joint probability density, with respect to Lebesgue measure on $\C^N$, is given by
\begin{equation}\label{betaGinibDensity}
p(z_1, \dots, z_N) = (Z_N^\beta) ^{-1} \prod_{j<k} \left|z_j - z_k \right|^\beta \exp\left( - \frac{\beta}{2} \sum_{j=1}^N |z_j|^2 \right). 
\end{equation}

Here $\beta > 0$ is the inverse-temperature, and $Z_N^\beta$ is the normalizing constant (also known as the partition function)

For $N$ large, the particles tend to be asymptotically uniformly distributed inside the disk of radius $\sqrt{N}$ centered at the origin. Let us denote by $n(R)$ the number of particles in the disk $D(0,R) = \left\{ |z| \le R \right\}$. For $N$ large compared with $R^2$ we have that $n(R)$ is typically about $R^2$.

Now, fix the parameters $a > \frac12$ and $b \ne 0$ (where $b \ge -1$ if $a = 2$, and $b > 0$ if $a > 2$), and consider the very rare event
$$ \calN^\beta_{a,b}(R) = \left\{ n(R) = \lfloor R^2 + b R^a \rfloor \right\}.$$
Based on macroscopic electrostatic considerations, the paper \cite{JLM} argues that after taking the limit $N$ to infinity, the following asymptotic probabilities for large fluctuations in $n(R)$ are observed.

\begin{conjecture}[The JLM law]\label{JLMpred}
With the parameters $a,b$ as above, and for $R \to \infty$, we have
$$\P \l[ \calN^\beta_{a,b}(R)\r] = \exp \l( -\beta \cdot \psi(\beta;a,b, \log R) \cdot R^{\varphi(a)} (1+o(1)) \r), $$
where
\[ \varphi(a) =
	\begin{cases}
		2a-1 &1/2 < a \le 1, \\
        3a -2 &1 \le a \le 2, \\
        2a &a \ge 2,
    \end{cases}
\]
\[
\psi(\beta;a,b, \log R) =
	\begin{cases}
		c_\beta b^2 &1/2 < a < 1, \\
		\frac16 |b|^3  &1 < a < 2, \\
		\frac12 (a - 2) b^2 \log R &a > 2,
	\end{cases}
\]
and $c_\beta$ is some constant depending on $\beta$.
\end{conjecture}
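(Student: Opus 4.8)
The plan is to read off $\P[\calN^\beta_{a,b}(R)]$ from a constrained electrostatic minimization, in the LDP-for-a-rare-event spirit used above for the hole. Writing the density \eqref{betaGinibDensity} as $\exp(-\beta\mathcal{H})$ with $\mathcal{H}=\sum_{j<k}(-\log|z_j-z_k|)+\tfrac12\sum_j|z_j|^2$, passing to the infinite ensemble and then to the mean-field functional $\mathcal{H}[\rho]=\tfrac12\int|z|^2\,\d\rho-\tfrac12\iint\log|z-w|\,\d\rho(z)\d\rho(w)$, Laplace's principle suggests
\[ \P\l[\calN^\beta_{a,b}(R)\r]=\exp\l(-\beta\,\Delta\mathcal{H}(R)\,(1+o(1))\r),\qquad \Delta\mathcal{H}(R)=\min\Big\{\mathcal{H}[\rho]:\rho\geq0,\ \textstyle\int_{|z|<R}\d\rho=R^2+bR^a\Big\}-\mathcal{H}[\rho_0], \]
where $\rho_0\equiv 1/\pi$ on its disk is the equilibrium droplet. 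Since $\mathcal{H}$ is a quadratic-plus-linear functional of $\rho$ and the confinement–interaction balance makes its first variation constant on the droplet, a neutral perturbation $\sigma=\rho-\rho_0$ supported inside the droplet has the exact excess energy $\Delta\mathcal{H}=-\tfrac12\iint\log|z-w|\,\d\sigma\,\d\sigma$, and the whole content of the conjecture is to evaluate this constrained minimum.

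The key reduction is radial. By rotational invariance the optimal $\sigma$ is radial, and if $q(r)=\int_{|z|<r}\d\sigma$ denotes the excess number of points inside radius $r$, then a standard computation (using $\tfrac1{2\pi}\int_0^{2\pi}\log|se^{i\theta}-w|\,\d\theta=\log\max(s,|w|)$) gives
\[ \Delta\mathcal{H}=\tfrac12\int_0^\infty\frac{q(r)^2}{r}\,\d r, \]
to be minimized over profiles with $q(0)=q(\infty)=0$, $q(R)=Q:=bR^a$, subject to the positivity constraint $\rho\geq0$, i.e. $q'(r)\geq-2r$ (one cannot deplete below the vacuum). The three regimes of the JLM law are governed entirely by how this slope constraint binds: since $|q|$ should be as small as possible, the minimizer raises $q$ to $Q$ just inside $r=R$ and then lets it decay as fast as the constraint permits, along $q'(r)=-2r$, reaching zero at $r_\ast=\sqrt{R^2+Q}$. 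The width $r_\ast-R$ of this depleted shell is $\sim Q/R$ when $Q\ll R^2$ and $\sim\sqrt{Q}\gg R$ when $Q\gg R^2$, which is precisely the dichotomy $a<2$ versus $a>2$.

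Evaluating the integral in each regime produces the exponents directly. For $1<a<2$ one has $R\ll Q\ll R^2$, the shell is thin, $q(r)\approx Q-2R(r-R)$, and
\[ \Delta\mathcal{H}\approx\frac{1}{2R}\int_0^{Q/2R}\l(Q-2Rs\r)^2\,\d s=\frac{Q^3}{12R^2}\ \propto\ |b|^3R^{3a-2}, \]
reproducing $\varphi(a)=3a-2$ and the cubic dependence $\psi\propto|b|^3$. For $a>2$ one substitutes $u=r^2-R^2$ into $\tfrac12\int_R^{r_\ast}q(r)^2r^{-1}\,\d r=\tfrac14\int_0^Q(Q-u)^2(R^2+u)^{-1}\,\d u$, whose leading term for $Q\gg R^2$ is $\tfrac14 Q^2\log(Q/R^2)\propto(a-2)\,b^2R^{2a}\log R$, reproducing $\varphi(a)=2a$ and the factor $(a-2)\log R$. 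For $\tfrac12<a<1$ one has $Q\ll R$, the positivity constraint never binds, the energy method degenerates, and the correct statement $\psi=c_\beta b^2$, $\varphi(a)=2a-1$ must instead come from a moderate-deviation estimate $\P[n(R)-R^2=bR^a]\approx\exp(-(bR^a)^2/(2\Var\,n(R)))$ built on the number-variance asymptotics $\Var\,n(R)\sim c_\beta' R$ for the $\beta$-Ginibre ensemble. The precise numerical constants and the exact $\beta$-normalization in $\psi$ are then pinned down, and cross-checked, by calibrating against the exactly solvable $\beta=2$ case \eqref{ShiraiLDP}, whose small-$b$ expansion must agree with the cubic and logarithmic laws above.

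The main obstacle is upgrading these heuristics to matching upper and lower bounds at the claimed exponential order, uniformly along the $a$-dependent rate $R^{\varphi(a)}$. The upper bound asks one to lower-bound the discrete energy of \emph{every} constrained configuration by the continuum minimum, controlling self-energy and granularity corrections; convexity of the logarithmic energy together with an obstacle-problem analysis of the free boundary $r_\ast$ of the saturated profile are the natural tools. The lower bound asks for a tilted configuration realizing the optimal $q$ and an estimate of the partition-function ratio $Z_N^\beta(\text{constrained})/Z_N^\beta$, where the difficulty for general $\beta$ is the absence of the integrable structure (Kostlan's independent radii) that makes the $\beta=2$ case tractable. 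I expect the two genuinely hard points to be (i) the borderline Gaussian regime $\tfrac12<a<1$, which lies outside the electrostatic method and requires a sharp central limit theorem with the correct $\beta$-dependent variance constant, and (ii) promoting the scaling exponents $\varphi(a)$, which are robust, to the sharp prefactors $\psi$, which is what distinguishes a genuine confirmation of the JLM law from a scaling heuristic.
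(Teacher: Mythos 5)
What you set out to prove is stated in the paper as a \emph{conjecture} (it is the Jancovici--Lebowitz--Manificat prediction): the paper does not prove it, and for $\beta \ne 2$ it remains open. The paper's rigorous content is Theorem \ref{JLM-beta-ens} (deficiency, $a \in (4/3,2)$, with the finite-$N$ scaling $N = \alpha R^2$), proved in Section \ref{beta-ans}, and measured against that proof your plan is essentially the same route rather than a different one: your Laplace-principle reduction to a constrained energy minimization is the paper's approximation of \eqref{betaGinibDensity} by the functional $I_\alpha$ of \eqref{limiting_functional}; your saturated profile (decay at the vacuum rate $q'=-2r$, with the transferred charge in a layer at $|z|=R$) is exactly the paper's minimizer $\overline{\mu}_p^\alpha$ of Section \ref{sec_min_meas}, with its empty annulus and singular component on the circle; your thin-shell value $Q^3/(12R^2)$ is Claim \ref{func-value-at-mu-p} combined with $2p^2\log p-(p-1)(3p-1)=\tfrac23(p-1)^3(1+O(p-1))$; and your ``tilted configuration'' lower bound is the paper's explicit good configurations $\cG_N$ of Section \ref{lower-bnd-JLM}. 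Your radial identity $-\tfrac12\Sigma(\sigma)=\tfrac12\int_0^\infty q(r)^2 r^{-1}\,\d r$ together with the obstacle constraint is a cleaner way to \emph{find} the constrained minimizer than the paper's guess-and-verify via Lemma \ref{char_minimiz_Ginib}; that is a genuine, if modest, gain, and your identification of the Gaussian regime $1/2<a<1$ as requiring a CLT with $\beta$-dependent variance matches the paper's remark that $c_\beta$ comes from the (only recently, and only for smooth statistics) proven CLT.

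There are two concrete gaps. First, your opening move, ``passing to the infinite ensemble,'' is unavailable: for $\beta\ne2$ it is not known that the $\beta$-Ginibre ensemble has any infinite-particle limit (the paper says so explicitly), so the quantity $\P[\calN^\beta_{a,b}(R)]$ is not even well defined in your setting. The paper's substitute is to keep $N=\alpha R^2$ finite, with $\alpha$ chosen depending on $a,b$ (and on $R$ when $a>2$), precisely so that your compensating region of radius $r_\ast=\sqrt{R^2+Q}$ stays inside the droplet of radius $\sqrt{\alpha}R$; any rigorous version of your plan needs this or an equivalent regularization. Second, you locate the hard points at $a<1$ and at sharp prefactors, but the binding limitation of the energy method is the granularity/self-energy error, which enters the exponent at order $O(N\log N)=O(R^2\log R)$ (see Proposition \ref{dev-ineq}); the leading term dominates only when $\varphi(a)>2$, i.e. $a>4/3$, so for $\beta\ne2$ the entire band $1/2<a\le 4/3$ --- not merely the Gaussian regime --- lies beyond this approach. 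Finally, a warning about your calibration step: your electrostatic values $\frac{1}{12}|b|^3R^{3a-2}$ and $\frac14(a-2)b^2R^{2a}\log R$ are the ones consistent with Shirai's $\beta=2$ formula \eqref{ShiraiLDP}, but they differ by a factor of $2$ from the conjecture's stated $\psi$ (and by a factor of $4$ from the constant printed in Theorem \ref{JLM-beta-ens}); these normalizations cannot all be reconciled, so calibrating against \eqref{ShiraiLDP} would expose an inconsistency in the stated constants rather than confirm them. This is not a flaw in your electrostatics, but it means the sharp-prefactor claim, as written, is not what your computation (or the paper's own Claim \ref{func-value-at-mu-p}) yields, and a complete argument must fix the convention before asserting $\psi$.
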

\begin{remark}
See \cite{JLM} for the precise expression for $\psi(\beta;a,b, \gamma)$ in the case $a = 2$ (cf. \eqref{ShiraiLDP}). It seems that no such expression is known for $a = 1$ (even when $\beta = 2$).

The constant $c_\beta$ is derived from the (conjectured) central limit theorem (CLT) for $n(R)$. Recently the CLT for smooth linear statistics was proved in \cite{LS2} (in this case the dependence on $\beta$ is explicit).
\end{remark}

In the case of the (infinite) Ginibre ensemble ($\beta = 2$) the arguments of \cite{JLM} are essentially mathematically rigorous (for proofs in the case $a = 2$, see the aforementioned \cite{Sh1}). For other values of $\beta$, it is not even known if a limiting object for the $\beta$-Ginibre ensemble exists, when the number of particles goes to infinity.

\subsubsection{Fluctuations for the GEF zeros process}

Nazarov, Sodin, and Volberg (\cite{NSV1}) confirmed that some of the predictions of \cite{JLM} hold also for large fluctuations in the number of zeros of the GEF. More precisely, they proved the following result.
\begin{theorem}(\cite{NSV1})
	\label{JLM}
	For every $a \ge 1/2$ and every $\eps >0$, we have 
	\begin{equation}
	\label{JLMeq}
	\exp(-R^{\varphi(a) + \eps}) \le \P\l( |n_F(R) - R^2 | \ge R^a \r)  \le 	\exp(-R^{\varphi(a) - \eps}), 
	\end{equation}
for all sufficiently large $R > R_0(\eps,a)$.
\end{theorem}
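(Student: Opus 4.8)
The plan is to split the symmetric event into the \emph{overcrowding} tail $\{n_F(R)\ge R^2+R^a\}$ and the \emph{deficiency} tail $\{n_F(R)\le R^2-R^a\}$ (the latter vacuous once $a>2$, since $n_F\ge 0$), and for each tail and each range of $a$ to produce a matching pair of bounds of the form $\exp(-R^{\varphi(a)\pm\eps})$; these then combine, the upper bound requiring control of both tails and the lower bound only of the cheaper one. The common starting point is Jensen's formula, which links the counting function to the circle average $\nu(r):=\frac{1}{2\pi}\int_0^{2\pi}\log|F(re^{i\theta})|\,\d\theta$ through $\nu(r)=\log|F(0)|+\int_0^r t^{-1}n_F(t)\,\d t$, with typical profile $\nu(r)\approx\tfrac12 r^2$. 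Equivalently, writing $U(z):=\log|F(z)|-\tfrac12|z|^2$ — a field that is stationary in distribution, because the GEF zero set is translation invariant — the centred count is the linear statistic $n_F(R)-R^2=\frac{1}{2\pi}\int(\Delta\indf_{D_R})\,U$, a functional of $U$ tested against a measure concentrated on $\partial D_R$. In this way the deviations of the discontinuous count are recast as deviations of the smooth random field $U$, whose one-point law is $\log|\zeta|$ for a standard complex Gaussian $\zeta$ and whose correlations decay super-exponentially on the unit scale.

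The three regimes of $\varphi$ reflect three mechanisms, matching the electrostatic predictions of \cite{JLM}. In the moderate regime $1/2<a<1$ I would argue directly: since $\Var(n_F(R))\asymp R$ and the linear statistic is a smooth functional of the nearly Gaussian, weakly correlated field $U$, a central-limit and variance estimate gives a Gaussian tail $\P(|n_F(R)-R^2|\ge R^a)\approx\exp(-cR^{2a}/R)=\exp(-cR^{2a-1})$, which is $\varphi(a)=2a-1$. This regime persists precisely up to $a=1$, where the required deviation $R^a$ saturates the $\asymp R$ fluctuating degrees of freedom living in the boundary annulus of $\partial D_R$; beyond this point the deviation can no longer be produced by a Gaussian rearrangement of the boundary layer, and the exponent changes.

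For the matching lower bounds I would exhibit favourable events. In the intermediate regime $1<a<2$ the excess or deficit of order $R^a$ must be accommodated by a macroscopic distortion of the zero density — a ``droplet'' of modified charge — whose self-interaction cost scales, by the electrostatic heuristic, like $R^{3a-2}$; realizing it amounts to biasing a whole band of the coefficients $\xi_k$ with $k$ near $R^2$ in a correlated fashion and estimating the resulting joint Gaussian probability. At the endpoint $a=2$ the deficiency tail degenerates to the genuine hole event, and the construction must reproduce the sharp constant of \eqref{gef-holeprob}, which provides a useful calibration. For $a>2$ only overcrowding survives, and the efficient configuration creates a full extra population of $\asymp R^a$ zeros inside $D_R$ at cost $\exp(-cR^{2a})$, that is $\varphi(a)=2a$, the $\beta=2$ Coulomb self-energy of the excess charge.

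The principal obstacle is twofold. First, the reduction from $n_F$ to $U$ is not a clean one-sided statement: the upper tail of $\nu$ (large $\log|F|$) is doubly-exponentially thin and essentially governed by a single dominant coefficient, whereas the lower tail is merely exponential — a near-zero of $F$ drives $\log|F|$ downward cheaply — so a count deviation couples the behaviour of $U$ across a range of radii and cannot be read off from the value of $\nu$ on a single circle; controlling it requires a multi-scale, annulus-by-annulus analysis of the log-field. Second, and most delicately, the intermediate regime $1<a<2$ with exponent $3a-2$ is neither Gaussian nor a pure hole or overcrowding effect: the upper bound must quantitatively rule out every cheaper mechanism by optimizing over the width of the annulus on which the deviation is forced, while the lower bound demands an explicit droplet construction with controlled correlations. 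It is exactly here that the $\eps$-room in the statement is spent, since matching $\varphi(a)$ without pinning down the sharp constant is what keeps both directions tractable.
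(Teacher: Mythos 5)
This theorem is not proved in the paper at all --- it is quoted verbatim from Nazarov--Sodin--Volberg \cite{NSV1} --- so your proposal must be measured against that source. Your starting reduction is in fact the right one and matches theirs: the identity $n_F(R)-R^2=\frac{1}{2\pi}\int_{D_R}\Delta U\,$ with $U=\log|F|-\frac12|z|^2$, the stationarity of $U$, and crucially the asymmetry between the doubly-exponential upper tail and the merely exponential lower tail of $\log|F|$ are exactly the ingredients organizing the proof in \cite{NSV1}, and your three mechanisms (boundary-layer fluctuation for $1/2<a<1$, electrostatic droplet for $1<a<2$, Coulomb self-energy of the excess charge for $a>2$) are the correct ones, consistent with the JLM heuristic and with the calibration \eqref{gef-holeprob} at $a=2$.

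Nevertheless the proposal has genuine gaps and is a road map rather than a proof. First, in the regime $1/2<a<1$ you derive the tail $\exp(-cR^{2a-1})$ from ``a central-limit and variance estimate.'' This step fails: asymptotic normality of $n_F(R)$ together with $\Var(n_F(R))\asymp R$ controls probabilities of deviations of order $R^{1/2}$ with polynomial accuracy, and says nothing about events of probability $\exp(-R^{2a-1})$, which for $a$ near $1$ is of size $\exp(-R)$. Deviations $R^a$ with $1/2<a<1$ lie in the moderate-deviations regime, where Gaussian-type tails must be proved afresh (in \cite{NSV1} this is done through the log-field analysis, not through any CLT); no amount of variance information plus distributional convergence yields \eqref{JLMeq} here. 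Second, the technical core --- converting a count deviation, which as you correctly observe cannot be read off from the circle average $\nu$ at a single radius, into a contradiction with the tail behaviour of $\log|F|$ via a multi-scale, annulus-by-annulus argument --- is named by you as ``the principal obstacle'' but never carried out; the same applies to the upper bound in the regime $1<a<2$, where ``rule out every cheaper mechanism by optimizing over the width of the annulus'' describes what must be proved without providing an argument that no cheaper mechanism exists. Identifying these obstacles correctly is valuable, but as it stands the two hardest directions of the theorem (moderate-deviation upper bounds and the intermediate-regime upper bound) remain unproved in your proposal.
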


\begin{remark}
Partial results in this direction were already obtained in \cite{Kr1, STs-1}.
\end{remark}


Using the results of \cite{GN}, together with the approach of \cite{Kr1} it is possible to establish finer asymptotics for fluctuations in the GEF zeros process which are analogous to the JLM law, in a restricted range of exponents. As an example we mention

\begin{theorem}
	\label{JLM-ext}
	For fixed $b\ne 0,a\in (4/3,2)$, we have, for the GEF zeros process the asymptotics
	\[ \P \left[n_F(R) = \lfloor R^2 + b R^a \rfloor \right] = \exp\l( -\frac{2 |b|^3}{3}R^{3a-2} (1+o(1)) \r), R \to \infty. \]
\end{theorem}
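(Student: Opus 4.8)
The plan is to realize the event $\{n_F(R) = \lfloor R^2 + bR^a\rfloor\}$ as a large-deviation event for the empirical measure of the scaled GEF zeros, and to read off the sharp rate from the minimizer of a constrained variational problem built from the LDP rate function $I^Z$ of the Weyl-polynomial / GEF ensemble. The key observation is that requiring exactly $m := \lfloor R^2 + bR^a\rfloor$ zeros inside $\{|z| < R\}$ is, after scaling by $R$ (so the disk becomes the unit disk), a constraint on the mass that the limiting intensity measure places on $D$. Since the typical intensity is the uniform measure $\mu_0$ on the unit disk with total plane-mass normalized to give density $\tfrac1\pi$, the event forces an excess (or deficit) of mass $\Delta := bR^a$ worth of zeros in $D_R$, i.e. a perturbation of order $R^{a-2}$ at the level of the \emph{scaled} empirical measure $\tfrac{1}{R^2}[\zz_R](\cdot/R)$.

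First I would set up the right scaling. The cost of pushing the scaled empirical measure a distance $\epsilon := R^{a-2}$ away from $\mu_0$ is governed by the Hessian of $I^Z$ at its minimizer $\mu_0$; because $\mu_0$ is a non-degenerate minimizer, this cost is quadratic in $\epsilon$ at rate $R^4$, which accounts heuristically for the $R^{4}\cdot(R^{a-2})^2 = R^{2a}$ scale — but the correct exponent $3a-2$ in the regime $a\in(4/3,2)$ shows that the dominant mechanism is \emph{not} a smooth global perturbation but a \emph{local} rearrangement of zeros near the boundary circle $\{|z|=1\}$. Concretely, to create a surplus/deficit of $bR^a$ zeros in $D_R$ one pushes a thin annular layer of zeros of width $\sim R^{a-2}\cdot R = R^{a-1}$ across the edge; the electrostatic (logarithmic-energy) cost of displacing such a layer is what produces the $\tfrac{2|b|^3}{3}R^{3a-2}$ term. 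The plan is therefore to use the deviations inequality for linear statistics of GEF zeros (the tool underlying Theorems \ref{conv-cond-meas-hole-GEF} and \ref{num-zeros-in-annulus-GEF}) to reduce the problem to a one-dimensional variational computation for the radial profile of the perturbation, following the method of \cite{Kr1}.

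The main steps, in order, are: (i) express $n_F(R)$ via the (scaled) empirical measure and translate the event into the mass constraint $\mu(D) = R^{-2}m$; (ii) invoke the LDP (Theorem of \cite{ZZ},\cite{BuZ}) to get $\P[\cdot] = \exp(-R^4 \inf I^Z + \ldots)$ over measures meeting the constraint, and argue that in the window $a\in(4/3,2)$ the constrained infimum is attained by a measure differing from $\mu_0$ only in a boundary layer; (iii) linearize $I^Z$ around $\mu_0$, using that the $\sup$-term contributes trivially to second order for radially-symmetric boundary perturbations, so that the cost reduces to the quadratic logarithmic-energy form $\iint \log|z-w|^{-1}\,\d h\,\d h$ evaluated on the excess signed measure $h$; (iv) solve the resulting variational problem for the optimal radial displacement profile under the fixed-mass constraint, obtaining the explicit constant $\tfrac{2|b|^3}{3}$; and (v) upgrade the heuristic to matching upper and lower bounds, with the lower bound coming from an explicit near-optimal zero configuration (a radial ``push'') and the upper bound from the deviations inequality controlling the linear statistic $n_F(R)$.

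The hard part will be step (iii)--(iv): establishing that, in the regime $a\in(4/3,2)$, the true optimizer is genuinely a boundary-layer perturbation and that the nonsmooth $\sup$-term in $I^Z$ does not alter the leading-order cost. One must show that spreading the excess mass over a thicker region is strictly more expensive and that the relevant scale $R^{a-1}$ for the layer width stays below the macroscopic scale but above the microscopic spacing — which is exactly why the window $a > 4/3$ appears (below $4/3$ the subleading corrections from the discreteness of zeros, or from the $\sup$-term, would compete with the main $R^{3a-2}$ term). The GEF-specific obstacle, compared with the exactly solvable Ginibre case, is the absence of an explicit joint law for the radii; this is precisely why one must substitute the soft deviations inequality for linear statistics in place of the Kostlan-type independent-Gamma representation, and it is here that the restricted range of $a$ — rather than all $a>1/2$ — enters the conclusion.
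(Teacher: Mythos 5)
Your overall skeleton does match the route the paper (and \cite{GN}) follows in Sections \ref{beta-ans} and \ref{cond-dist-GEF}: turn the counting event into a mass constraint on a smoothed empirical measure, approximate the joint density by the rate functional up to $O(R^{2}\log R)$-type errors, solve the constrained minimization, and match with a lower bound; your boundary-layer picture, the cubic scaling producing $R^{3a-2}$, and your explanation of the threshold $a>4/3$ (the $O(R^{2}\log R)$ error swallows the main term once $3a-2\le 2$) are all correct. Two secondary points where you diverge: the paper emphasizes that the off-the-shelf LDP of \cite{ZZ} \emph{cannot} be quoted here, since the GEF must be approximated by Weyl polynomials of degree $N=\lfloor\alpha R^{2}\rfloor$ with $\alpha\to\infty$ logarithmically in $R$ (and the events have vanishing rate, so only quantitative deviation estimates help); and the matching lower bound for the GEF is obtained in \cite{GN} by the Rouch\'e/coefficient-domination method of \cite{STs-1} (Section \ref{low-bnd-for-GEF}), not by integrating a joint density over a near-optimal configuration, a device available for $\beta$-Ginibre but not directly for GEF zeros.

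The genuine gap is your step (iii): the assertion that the $\sup$-term ``contributes trivially to second order,'' so that the cost reduces to the quadratic form $-\Sigma(h)$, is false, and the plan built on it gives the wrong constant. In $I^Z_\alpha(\mu)=2\sup_z\bigl(U_\mu(z)-\tfrac{1}{2\alpha}|z|^2\bigr)-\Sigma(\mu)-C_\alpha$ the confinement lives \emph{only} inside the sup, so it cannot be discarded (without it, moving zeros outward is free and the constrained problem degenerates); and it enters at \emph{first} order, not second. Concretely, the perturbation that minimizes $-\Sigma(h)$ alone under the mass and positivity constraints is the Ginibre one — empty the annulus $\sqrt{p}<|z|<1$ and place an atom of mass $\delta=1-p$ on the circle — with cost $\tfrac{\delta^3}{6}(1+o(1))$; but for this $h$ one computes $U_h\equiv\tfrac12\bigl(1-p+p\log p\bigr)=\tfrac{\delta^2}{4}(1+o(1))>0$ on $\{|z|\le\sqrt p\}$, and in the scaling actually needed (degree $N=\alpha R^2$, $\alpha\to\infty$) the sup-term penalizes this bump at order $\alpha R^4\delta^2$, which overwhelms the $R^4\delta^3$ Coulomb cost. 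The sup-term thus acts as the hard constraint $U_h\le 0$ on the droplet, which forces the excavated annulus to extend \emph{symmetrically past} the circle, $\sqrt p<|z|<\sqrt q$ with $p(\log p-1)=q(\log q-1)$ (so $q\approx 1+\delta$), and the atom to have mass $q-p\approx 2\delta$ — precisely the measure $\mu^Z_p$ of Section \ref{defic-over}, i.e.\ the forbidden-region phenomenon. This changes the answer: the pure electrostatic problem yields $\tfrac{|b|^3}{6}R^{3a-2}$, which is the Ginibre rate (take $p\to1$ in \eqref{ShiraiLDP}), whereas the correct GEF cost computed from $\mu^Z_p$ is $\tfrac{2|b|^3}{3}R^{3a-2}$ — four times larger. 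So step (iv) as you describe it cannot produce the stated constant; handling the sup-term is not a technicality but the GEF-specific heart of the proof.
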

The lower bound in the above asymptotics can, in fact, be shown to hold for $a \in (1,2)$, and it is plausible that the results hold in this range. 

\subsection{Deficiency and Overcrowding - conditional distribution}\label{defic-over}

Denote by $n_F(R)$ be the number of zeros of the GEF inside the disk $\{|z| < R\}$. Recall that the zero counting measure $[\zz_R]$ denotes the GEF zero counting measure, conditioned on the hole event in $\{n_F(R) = 0\}$. We now denote by $[\zz^p_R]$ the GEF zero counting measure, conditioned on the event $\{n_F(R) = \lfloor pR^2 \rfloor \}$, with $p\ge 0, p \ne 1$.

\textbf{Notation:}
If $p = 0$, we set $q = e$. Otherwise, for $0 < p < e$, let $q = q(p)$ be the non-trivial solution of the equation $p(\log p - 1) = q(\log q - 1)$.

In \cite{GN}, we find the limiting conditional measure for these conditional counting measures. More precisely, the scaled conditional counting measure converges weakly (say in expectation) to a limiting Radon measure on $\C$
\[  \frac{1}{R^2} [\zz^p_R] \l( \frac{\cdot}{R} \r) \to \mu^Z_p, \mbox{ as } \quad R \to \infty, \]
where
$$\mu^Z_p = \begin{cases}
\left[\indf_{\left\{ 0\le\left|w\right|\le\sqrt{p}\right\} } + \indf_{\left\{ \sqrt{q}\le\left|w\right|\right\} }\right]\frac{ m}{\pi}+\left(q-p\right) m_\T & \,p\in\left[0,1\right);
\vspace{0.1cm}
\\
\left[\indf_{\left\{ 0\le\left|w\right|\le\sqrt{q}\right\} } + \indf_{\left\{ \sqrt{p}\le\left|w\right|\right\} }\right]\frac{ m}{\pi}+\left(p-q\right) m_\T & \,p\in\left(1,e\right);
\vspace{0.1cm}
\\

\indf_{\left\{ \sqrt{p}\le\left|w\right|\right\} }\frac{m}{\pi}+p\, m_\T & \,p\ge e.
\end{cases}
$$
Using the determinantal structure of the Ginibre ensemble, it is not difficult to prove that a similar result holds for the (conditional) ``eigenvalue'' counting measure, with the limiting measure $\mu^Z_p$, replaced by
$$
\mu^G_p = \begin{cases}
\left[\indf_{\left\{ 0\le\left|w\right|\le\sqrt{p}\right\} } + \indf_{\left\{ 1\le\left|w\right|\right\} }\right]\frac{m}{\pi} +\left(1 - p\right) m_\T & \,p\in\left[0,1\right);
\vspace{0.1cm}
\\

\left[\indf_{\left\{ \left|w\right|\le 1 \right\} } + \indf_{\left\{ \sqrt{p}\le\left|w\right|\right\} }\right]\frac{m}{\pi}+\left(p - 1\right) m_\T & \,p > 1.
\end{cases}
$$

\begin{figure}
    \centering
    \begin{minipage}{0.45\textwidth}
        \centering
        \includegraphics[width=0.9\textwidth]{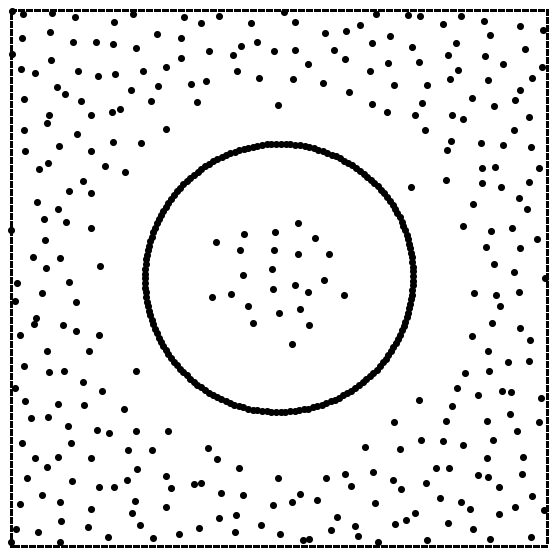} 
        \caption{GEF, deficiency $p = \frac12$}\label{GEF-defic}
    \end{minipage}\hfill
    \begin{minipage}{0.45\textwidth}
        \centering
        \includegraphics[width=0.9\textwidth]{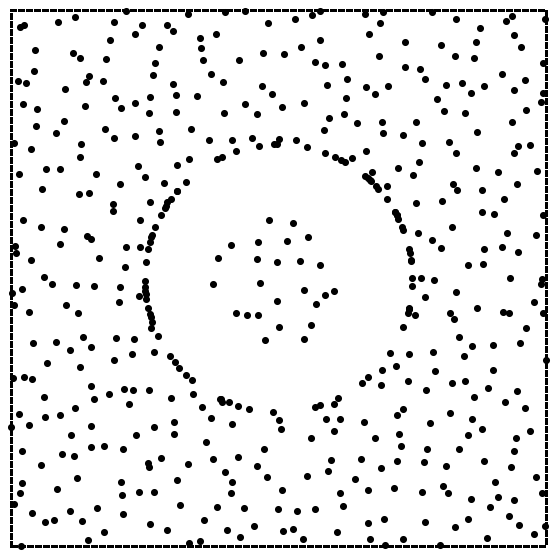} 
        \caption{Ginibre, deficiency $p = \frac12$}\label{Ginib-defic}
    \end{minipage}
\end{figure}

Figure \ref{GEF-defic} and Figure \ref{Ginib-defic} illustrate the case of a deficiency $p = \frac12$, while Figure \ref{GEF-over} and Figure \ref{Ginib-over} illustrate overcrowding for $p = 2$.

\begin{figure}
    \centering
    \begin{minipage}{0.45\textwidth}
        \centering
        \includegraphics[width=0.9\textwidth]{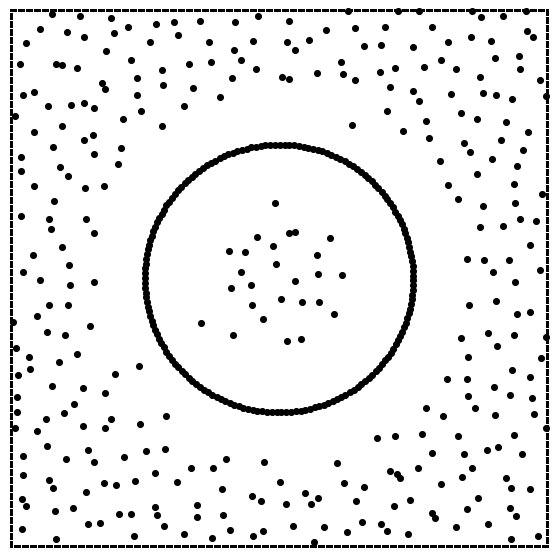} 
        \caption{GEF, overcrowding $p=2$}\label{GEF-over}
    \end{minipage}\hfill
    \begin{minipage}{0.45\textwidth}
        \centering
        \includegraphics[width=0.9\textwidth]{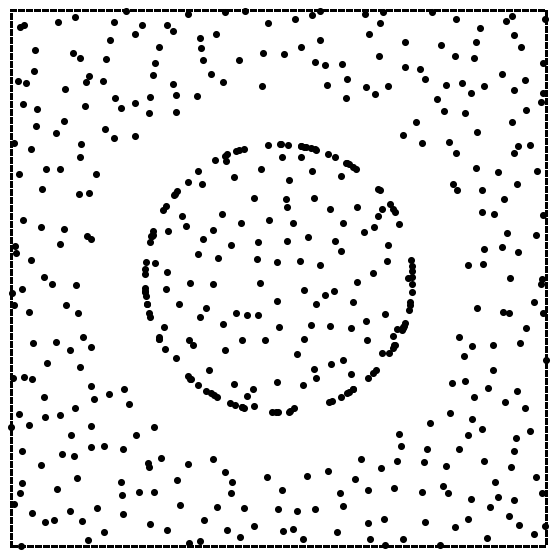} 
        \caption{Ginibre, overcrowding $p = 2$}\label{Ginib-over}
    \end{minipage}
\end{figure}

\newpage{}

\section{Analysis of an illustrative model: 2D $\beta$-ensembles}\label{beta-ans}

In this section, we will provide proof sketches for the convergence of the conditional distributions of the particles, and of the JLM law for the finite $\beta$-Ginibre ensembles.
In the next section we will briefly describe our proof from \cite{GN} for the zeros of the GEF. We recall that the two-dimensional $\beta$-Ginibre ensemble, consists of $N$ particles, whose joint probability density, with respect to Lebesgue measure on $\C^N$, is given by \eqref{betaGinibDensity}.

Since, at the moment, a limiting object for the finite $\beta$-Ginibre ensemble is not known to exist, we choose a different limiting procedure than the one in the paper \cite{JLM} (see Section \ref{JLMlawsubsec}). We fix a scaling parameter $\alpha\ge1$, and consider the asymptotics in terms of the large parameter $R = \sqrt{\frac{N}{\alpha}}$. Heuristically, with the parameter $\alpha$, the finite system resembles the (hypothetical) infinite system up to distances less than $\sqrt{\alpha} R$ from the origin.  In this setting, there are typically about $R^2$ particles in the disk $D(0,R) = \left\{ |z| \le R \right\}$. We again denote the number of particles in this disk by $n(R)$.

We will illustrate below the proof of some of the predictions in Conjecture \ref{JLMpred} above (using the different scaling procedure above). The proof in the case of overcrowding is similar. For $a\ge2$ one has to choose the value of the scaling parameter $\alpha$ depending on $a, b$, and also $R$ (if $a > 2$).

\begin{theorem} \label{JLM-beta-ens}
For fixed $b >0 ,a\in (4/3,2)$, we have,
\[
\P \left[n(R) \le R^2 - b R^a \right] = \exp\l( -\frac{\beta b^3}{3} R^{3a-2} (1+o(1)) \r), R \to \infty.
\]
\end{theorem}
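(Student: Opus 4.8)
The plan is to recast the deficiency event as a constrained minimization of the $\b$-Ginibre energy, and to read off the sharp rate from a local (thin-shell) expansion of the constrained minimum. After rescaling the $N=\alpha R^2$ particles by $\sqrt N$, the empirical measure $\mu_N=\frac1N\sum_j \del_{z_j/\sqrt N}$ concentrates, by the $\b$-Ginibre LDP at rate $\frac{\b}{2}N^2$ (see \cite{HP2}), on the uniform measure $\nu_0$ on the unit disk, the governing functional being $\mathcal I(\nu)=\int|y|^2\,\d\nu(y)-\S(\nu)$ normalized by its minimum $\mathcal I(\nu_0)$. In these coordinates $D(0,R)$ becomes the disk $D_{r_0}$ of radius $r_0=1/\sqrt\alpha$, the typical count $R^2$ corresponds to $\nu_0(D_{r_0})=r_0^2$, and $\{n(R)\le R^2-bR^a\}$ becomes the constraint $\mu_N(D_{r_0})\le r_0^2-\eta$ with deficit $\eta=\frac{b}{\alpha}R^{a-2}\to0$. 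The heuristic target is thus $\log\P[n(R)\le R^2-bR^a]\approx-\frac{\b}{2}N^2\,\mathcal E^\star(\eta)$, where $\mathcal E^\star(\eta)=\inf\{\mathcal I(\nu)-\mathcal I(\nu_0):\nu(D_{r_0})\le r_0^2-\eta\}$.

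First I would solve the constrained problem. By radial symmetry it reduces to a one-dimensional problem in $m(r):=(\nu-\nu_0)(D_r)$: for radial mean-zero signed measures one has $-\S(\nu-\nu_0)=\int_0^\infty m(r)^2\,r^{-1}\,\d r$, while the linear term vanishes because the relevant perturbation is supported inside the droplet, where the effective potential $|y|^2-2U_{\nu_0}$ is constant. One then minimizes $\int_0^\infty m(r)^2 r^{-1}\,\d r$ subject to $m(r_0)=-\eta$ under the incompressibility bound that $\nu$ has density at most $\pi^{-1}$ (so removing mass $\eta$ from $D_{r_0}$ forces emptying a shell of area $\approx\pi\eta$). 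The minimizer is precisely the profile encoded in $\mu^G_p$ of Section \ref{defic-over}: the density is depleted to zero on a thin shell just inside $|y|=r_0$ of width $w\approx\eta/(2r_0)$, the displaced mass $\eta$ forming a ring on $\{|y|=r_0\}$. Expanding the resulting energy to leading order gives $\mathcal E^\star(\eta)=c\,\eta^3(1+o(1))$; substituting $\eta=\frac{b}{\alpha}R^{a-2}$ and multiplying by $\frac{\b}{2}N^2=\frac{\b}{2}\alpha^2R^4$ makes all powers of $\alpha$ cancel and leaves an exponent $R^{3a-2}$ with cubic dependence on $b$. The numerical constant is fixed by this third-order coefficient, and can be cross-checked at $\b=2$ by Taylor-expanding the exact exponent $2p^2\log p-(p-1)(3p-1)$ of \eqref{ShiraiLDP} about $p=1$, where it vanishes to second order and so produces the $|b|^3$ law of the stated form.

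It remains to upgrade this to matching bounds. For the lower bound I would exhibit the gap-plus-ring profile as an event of controlled probability — demanding an empty shell and a prescribed excess of points near $|z|=R$ — and estimate its probability from below directly through the energy $\frac{\b}{2}N^2\mathcal E^\star(\eta)$; as noted after the statement, this construction should already succeed for all $a\in(1,2)$. For the upper bound one writes $\P[n(R)\le R^2-bR^a]=Z_N^{\b,\mathrm{constr}}/Z_N^{\b}$ and must show $\log(Z_N^{\b,\mathrm{constr}}/Z_N^{\b})\le-\frac{\b}{2}N^2\mathcal E^\star(\eta)+o(R^{3a-2})$. This is the main obstacle: the bare LDP controls the free energy only to leading order $N^2$, whereas the event lives at the far smaller scale $R^{3a-2}\ll N^2$, so a genuine moderate-deviation estimate is required, with control of the sub-leading corrections to the constrained partition function. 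These corrections are of surface order $R^2$ (up to logarithms), coming from number-variance and edge fluctuations, and the hypothesis $a>4/3$ is exactly what guarantees $3a-2>2$, so that the cubic mean-field term dominates them. Making this rigorous — e.g.\ via the discrepancy/electrostatic estimates or loop-equation expansions available for two-dimensional Coulomb gases — is the technical heart of the argument. The overcrowding case is analogous, and the regimes $a\ge2$ additionally require tuning $\alpha$ as a function of $a,b$ (and of $R$ when $a>2$), but follow the same scheme.
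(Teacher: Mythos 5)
Your proposal is, in substance, the paper's own proof: the paper approximates the joint density at exponential scale by $I_\alpha(\nu)=\int|z|^2\alpha^{-1}\,\mathrm{d}\nu-\Sigma(\nu)$, identifies the constrained minimizer $\overline{\mu}_p^\alpha$ (the background measure with an emptied annulus $\{\sqrt{p}<|z|<1\}$ and a singular ring at $|z|=1$ --- exactly your gap-plus-ring profile), evaluates the energy excess in Claim \ref{func-value-at-mu-p} and expands it cubically at $p=1$, and proves the lower bound by integrating the density over a tube of configurations approximating $\overline{\mu}_p^\alpha$. Moreover, your ``main obstacle'' is resolved in the paper without loop equations or discrepancy estimates: the quantitative Laplace bound \eqref{prob-lin-stats-upper-bnd} built on smoothed empirical measures, combined with the partition-function asymptotics $\log Z_N^\beta=-\frac12\beta N^2 I_\alpha(\overline{\mu}^\alpha_{\mathrm{eq}})+O(N\log N)$ of \cite{LS1} (Proposition \ref{beta_ans_norm_const}), carries errors of surface order $O(R^2\log R)$ only, and $a>4/3$ (i.e.\ $3a-2>2$) is exactly what lets $R^{3a-2}$ beat them, as you diagnosed.

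There is, however, one genuine problem, and it sits in the step you waved through: the constant. Carrying out your own cross-check, $2p^2\log p-(p-1)(3p-1)=\frac23|p-1|^3\left(1+O(|p-1|)\right)$, so \eqref{ShiraiLDP} with $p=1-bR^{a-2}$ gives $\log\P=-\frac16 b^3R^{3a-2}(1+o(1))$ at $\beta=2$; equivalently, your thin-shell energy is $\mathcal{E}^\star(\eta)=\frac{\alpha\eta^3}{6}(1+o(1))$, and $\frac{\beta}{2}N^2\cdot\frac{\alpha\eta^3}{6}$ with $\eta=b\alpha^{-1}R^{a-2}$ equals $\frac{\beta b^3}{12}R^{3a-2}$. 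So your method (and, for that matter, the paper's own outline via Claim \ref{func-value-at-mu-p}) produces the constant $\frac{\beta}{12}$, which is four times smaller than the $\frac{\beta}{3}$ displayed in the statement; this discrepancy is an internal inconsistency of the survey, but your assertion that the cross-check ``produces the $|b|^3$ law of the stated form'' conceals precisely the point where your argument and the statement disagree --- a complete proof must either derive the displayed constant or correct it. Two smaller slips: (i) the binding constraint in your variational problem is $\nu\ge0$ (the density cannot be depleted below zero), not an upper density cap $\pi^{-1}$; such a cap would forbid the singular ring that your own minimizer carries; (ii) the remark that the lower bound extends to all $a\in(1,2)$ is made in the paper only for the GEF zeros (Theorem \ref{JLM-ext}), where it rests on a Rouch\'e-type analytic construction with no $\beta$-Ginibre analogue --- your energy-based lower bound also carries $O(R^2\log R)$ errors and therefore needs $a>4/3$ as well.
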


Our proof of Theorem \ref{JLM-beta-ens} proceeds via large deviation type estimates, and for $a\le 4/3$, the error in our estimates (see Proposition \ref{dev-ineq}) overwhelms the leading term.
In the range $a \in (1,\frac43]$ establishing the JLM law for the $\beta$-Ginibre ensembles, with $\beta \ne 2$ is an interesting open problem.

In addition to predicting the asymptotic decay of the rare events, the paper \cite{JLM} describes the limiting conditional distribution of the particles. In order to simplify the presentation, we will consider just the events
$$ F_p(R) = \left\{ n(R) \le p R^2 \right\}, $$
with $p \in [0,1)$ a fixed parameter (one can also consider overcrowding, and $p$ can depend on $R$).

In order to describe the limiting distribution, we introduce \emph{linear statistics}, that is, the random variables
$$ n(\varphi ; R) = \sum_{j=1}^N \varphi\left(\frac{z_j}{R}\right), $$
where $\varphi$ is a smooth (say $C^2$) test function with compact support.

The following result describes the conditional limiting distribution, where $m$ is Lebesgue measure on $\C$, and $m_\T$ is the uniform probability measure on the unit circle $\{|z| = 1\}$.

\begin{theorem}
	\label{JLM-cond-dist}
	As $R \to \infty$ we have

$$ \E_{F_p(R)} \left[ n(\varphi ; R ) \right] = R^2 \int_\C \varphi(w) \,\d \mu_p^\alpha(w) + o(R^2),$$

where the limiting conditional measure is given by
$$ \mu_p^\alpha = \left(\indf_{\{|z| \le \sqrt{p}\}}  + \indf_{\{1 \le |z| \le \sqrt{\alpha}\}} \right) \frac{m}{\pi} + (1 - p) m_\T. $$
\end{theorem}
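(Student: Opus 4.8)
The plan is to realize $F_p(R)$ as a rare event for the empirical measure and to read off the conditional intensity from the unique minimizer of the $\beta$-Ginibre rate functional under the corresponding constraint. I would first pass to the standard scaling $w_j = z_j/\sqrt{N}$, in which the normalized empirical measure $\hat\nu_N = \frac1N\sum_j \del_{w_j}$ obeys an LDP in $\moc$ at rate $N^2$ with good rate function proportional to $J(\nu) = \int |w|^2 \,\d\nu(w) - \S(\nu)$ (up to an additive constant and the factor $\b/2$), as recorded for the $\b$-Ginibre ensembles via \cite{HP2} following \eqref{betaGinibDensity}; the unconstrained minimizer is the uniform measure on the unit disk. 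Since $R = \sqrt{N/\alpha}$, the event $\{n(R)\le pR^2\}$ translates exactly into the mass constraint $\hat\nu_N(\ol{D}_{1/\sqrt\alpha}) \le p/\alpha$, where $D_r = \{|w|<r\}$. Note that the factor $\b/2$ rescales but does not move the minimizer, which is why the limiting measure $\mu_p^\alpha$ carries no $\b$-dependence.

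First I would solve the associated constrained equilibrium problem: minimize $J(\nu)$ over probability measures subject to $\nu(\ol{D}_{1/\sqrt\alpha}) \le p/\alpha$. Because $p<1$, the unconstrained optimum places mass $1/\alpha$ inside $\ol{D}_{1/\sqrt\alpha}$ and hence violates the constraint, so the constraint is active and the excess mass $(1-p)/\alpha$ must be swept out of the inner disk. The candidate minimizer $\nu_*$ consists of a uniform core of density $\pi^{-1}$ on $D_{\sqrt p/\sqrt\alpha}$ (mass $p/\alpha$), the balayage of the removed charge onto the bounding circle $\{|w|=1/\sqrt\alpha\}$ (a singular component of mass $(1-p)/\alpha$), and a uniform bulk of density $\pi^{-1}$ on the annulus $\{1/\sqrt\alpha \le |w|\le 1\}$. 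I would verify that $\nu_*$ satisfies the Euler--Lagrange conditions — $|w|^2 - 2U_{\nu_*}(w)$ constant on the support and dominated off it, with the correct sign of the Lagrange multiplier on the active constraint — and invoke strict convexity of the logarithmic energy $-\S$ on finite-energy measures to conclude that $\nu_*$ is the \emph{unique} minimizer. Since the uniform measure on a circle has finite energy, $J(\nu_*)<\infty$ and $\nu_*$ is a genuine competitor.

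Next I would transfer the LDP to conditional concentration: for any weak neighborhood $G$ of $\nu_*$, the LDP upper bound applied to $\{\hat\nu_N \in G^\c\}\cap\{\text{constraint}\}$ together with the LDP lower bound for the full constraint event gives $\P[\hat\nu_N \notin G \mid F_p(R)] \to 0$, using the strict gap $\inf\{J(\nu): \nu\in G^\c,\ \nu(\ol{D}_{1/\sqrt\alpha})\le p/\alpha\} > J(\nu_*)$ furnished by uniqueness and goodness of $J$. Thus $\hat\nu_N\to\nu_*$ weakly, in conditional probability. Because $n(\varphi;R) = N\int_\C \varphi(\sqrt\alpha\,w)\,\d\hat\nu_N(w)$ with $\varphi$ a fixed bounded $C^2$ function of compact support, the integrand functional is bounded and weakly continuous, so bounded convergence upgrades the convergence in probability to $\E_{F_p(R)}[\int_\C \varphi(\sqrt\alpha\,w)\,\d\hat\nu_N(w)] \to \int_\C \varphi(\sqrt\alpha\,w)\,\d\nu_*(w)$. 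Multiplying by $N=\alpha R^2$ and pushing $\nu_*$ forward under $w\mapsto u=\sqrt\alpha\,w$ — which maps $\nu_*$ to $\alpha^{-1}\mu_p^\alpha$, sending the core to $D_{\sqrt p}$, the ring to the unit circle with mass $(1-p)$, and the bulk annulus to $\{1\le|u|\le\sqrt\alpha\}$ — yields exactly $\E_{F_p(R)}[n(\varphi;R)] = R^2\int_\C \varphi\,\d\mu_p^\alpha + o(R^2)$.

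The hard part is concentrated in the two middle steps. Potential-theoretically, one must prove that the swept charge accumulates as a \emph{singular} measure on the edge circle rather than diffusing into a neighborhood — this is precisely the ``pile-up at the edge'' reflected in the term $(1-p)m_\T$ — which requires the balayage/obstacle-problem characterization of the constrained equilibrium measure and a careful check of the variational inequalities across $\{|w|=1/\sqrt\alpha\}$. Technically, the conditioning functional $\nu\mapsto\nu(\ol{D}_{1/\sqrt\alpha})$ is only upper semicontinuous, so $F_p(R)$ is neither open nor closed in the weak topology; closing the gap between the LDP upper and lower bounds requires approximating the hard constraint by open and closed constraints at levels $p/\alpha\pm\delta$ and using continuity of the value of the constrained variational problem in the constraint level as $\delta\downarrow 0$. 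Once these points are settled, the remaining steps are routine.
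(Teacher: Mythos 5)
Your proposal is correct in outline, but it follows a genuinely different route from the paper. You treat the conditional law via the classical Gibbs-conditioning scheme: invoke the off-the-shelf LDP for the $\beta$-Ginibre ensemble (\cite{HP2}) at rate $N^2$, identify the constrained minimizer $\nu_*$ of the rate functional, deduce conditional concentration of the empirical measure near $\nu_*$ from the gap between the upper bound on $G^\c\cap\cF_p$ and the lower bound on the conditioning event, and finish with bounded convergence; this is soft and suffices for the purely qualitative $o(R^2)$ statement of Theorem \ref{JLM-cond-dist}. The paper never uses an LDP as a black box: it re-proves everything quantitatively, approximating the joint density by the functional $I_\alpha$ through smoothed empirical measures, establishing the deviation inequality of Proposition \ref{dev-ineq} with an explicit tail $\exp(-C\beta\lambda^2/\fD(\varphi)+C_\varphi R^2\log R)$ via the effective convexity estimate of Claim \ref{convexity-of-functional}, and proving the lower bound \eqref{low-bnd-prob-zero-defic} for $\P[F_p(R)]$ by hand, constructing well-separated configurations whose smoothed potential approximates $U_{\overline{\mu}_p^\alpha}$ to accuracy $O(\log N/N)$. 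What the paper's route buys is precisely what your route cannot deliver: quantitative rates, which are essential for Theorem \ref{JLM-beta-ens} (where the error $O(N\log N)$ must be beaten by the leading term $R^{3a-2}$, forcing $a>4/3$) and for conditional concentration at scale $\lambda\sim R\sqrt{\log R}$. What your route buys is brevity for the single theorem at hand. Two further remarks. First, your identification of $\nu_*$ (core, singular layer on the edge circle by balayage, bulk annulus, Euler--Lagrange verification, strict convexity for uniqueness) is in substance identical to the paper's Section \ref{sec_min_meas}, where Lemma \ref{char_minimiz_Ginib} plays the role of your variational inequality and the explicit computation of $U_{\overline{\mu}_p^\alpha}$ verifies it; so this step is not a divergence. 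Second, the topological difficulty you flag is real but can be dispatched more cleanly than by perturbing the constraint level: the paper defines $\cF_p$ using the \emph{open} unit disk $D$, and since $\nu\mapsto\nu(D)$ is lower semicontinuous for open $D$, the set $\cF_p$ is weakly closed — which handles the upper bound directly; only the LDP lower bound then needs an interior-versus-closure argument (or, as in the paper, an explicit construction), and there the continuity in $p$ of the value $\inf_{\cF_p}I_\alpha$, read off from Claim \ref{func-value-at-mu-p}, does the job as you suggest.
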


\begin{remark}
We write $\P_F$ (resp. $\E_F$) for the conditional probability (resp. expectation) on the event $F$.
\end{remark}

\begin{remark}
Notice $\mu_p^\alpha$ is \emph{not} a probability measure. Later, it will also be convenient to work with the normalized probability measure
$$ \overline{\mu}_p^\alpha = \frac1\alpha \mu_p^\alpha. $$
\end{remark}

\subsection{Deviation inequality for linear statistics}\label{dev-ineq-ginibre}

Theorem \ref{JLM-cond-dist} follows from the following deviation inequality:
\begin{proposition}\label{dev-ineq}
For $R, \lambda > 0$ we have
$$ \P_{F_p(R)} \left[\left|n(\varphi ; R) - R^2 \int_\C \varphi(w) \, \d \mu_p^\alpha (w) \right| \ge \lambda \right] \le 
\exp\left(-\frac{C \beta}{\fD (\varphi)} \lambda^2 + C_\varphi R^2 \log R\right),
$$
where $$\fD(\varphi) = \left\Vert \nabla \varphi \right\Vert^2_{L^2(m)} = \int_\C (\varphi_x^2 + \varphi_y^2) \, \d m.
$$
\end{proposition}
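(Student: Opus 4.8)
The plan is to deduce Proposition \ref{dev-ineq} from an exponential (Chernoff) bound on the \emph{conditional} Laplace transform of the centered linear statistic. Write $X = n(\varphi;R)$ and $M = R^2\int_\C\varphi\,\d\mu_p^\alpha$. It suffices to prove that for every real $s$ of either sign,
\[
\log \E_{F_p(R)}\!\left[ e^{s(X-M)} \right] \le \frac{s^2}{C'\beta}\,\fD(\varphi) + C_\varphi R^2\log R ,
\]
since the two one-sided tails then follow from Markov's inequality, $\P_{F_p(R)}[\pm(X-M)\ge\la]\le e^{-s\la}\,\E_{F_p(R)}[e^{\pm s(X-M)}]$, and optimizing over $s>0$ (the optimal choice is $s\asymp \beta\la/\fD(\varphi)$, which reproduces the exponent $-C\beta\la^2/\fD(\varphi)$); a union bound then gives the two-sided statement.

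First I would rewrite the conditional Laplace transform as a ratio of \emph{constrained} partition functions. Because the conditioning event $F_p(R)=\{n(R)\le pR^2\}$ appears identically in numerator and denominator,
\[
\E_{F_p(R)}\!\left[ e^{sX} \right] = \frac{\displaystyle\int_{F_p(R)} e^{sX}\,\prod_{j<k}|z_j-z_k|^\beta\, e^{-\frac{\beta}{2}\sum_j|z_j|^2}\,\d m^{\otimes N}}{\displaystyle\int_{F_p(R)} \prod_{j<k}|z_j-z_k|^\beta\, e^{-\frac{\beta}{2}\sum_j|z_j|^2}\,\d m^{\otimes N}} =: \frac{Z^{F_p}_{s}}{Z^{F_p}_{0}},
\]
so the global normalisations $Z_N^\beta$ and $\P[F_p(R)]$ cancel. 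The factor $e^{sX}=\exp\bigl(s\sum_j\varphi(z_j/R)\bigr)$ is absorbed by replacing the confining potential $\tfrac12|z|^2$ with the smoothly perturbed $V_s(z)=\tfrac12|z|^2-\tfrac{s}{\beta}\varphi(z/R)$, so that $Z^{F_p}_s$ is the constrained partition function of a $\beta$-ensemble with potential $V_s$ and mass constraint $\{|w|<1\}$ carrying at most $p$.

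Next I would analyse $F(s):=\log Z^{F_p}_s$ through the constrained equilibrium problem. After rescaling to the unit disk, the empirical measures concentrate at speed $R^4$ on the minimiser of the constrained weighted energy; since both $Z^{F_p}_s$ and $Z^{F_p}_0$ carry the \emph{same} constraint, its leading cost cancels in the ratio, and the constrained equilibrium measure at $s=0$ is precisely $\ol{\mu}_p^\alpha=\tfrac1\alpha\mu_p^\alpha$. The function $F$ is convex, with $F'(s)=\E_{F_p,s}[X]$ and $F''(s)=\Var_{F_p,s}(X)\ge0$. A first-variation (envelope) computation identifies the leading term of $F'(0)$ with $M$: differentiating the equilibrium energy in the direction $\varphi$ and using $N\ol{\mu}_p^\alpha=R^2\mu_p^\alpha$ gives exactly $M$ at leading order. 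For the quadratic term, the Euler--Lagrange relation for the perturbed potential forces the bulk density response $\partial_s\rho_s=-\tfrac{1}{2\pi\beta N}\Delta\varphi$, whence $F''(s)=\tfrac{\fD(\varphi)}{2\pi\beta}\bigl(1+o(1)\bigr)$; equivalently, the Hessian of the weighted energy is the Dirichlet form, which is exactly what produces $\fD(\varphi)=\|\nabla\varphi\|_{L^2}^2$ as variance proxy (note the scale invariance $\int|\nabla_z\varphi(z/R)|^2\,\d m(z)=\fD(\varphi)$). Taylor expanding, $F(s)-F(0)-sM\le \tfrac{s^2}{4\pi\beta}\fD(\varphi)+E(s)$, where the remainder $E(s)$ collects the lower-order discrepancies, namely the gap between the discrete interaction energy and its continuum counterpart (of self-energy type, hence of order $N\log N\asymp R^2\log R$) together with the subleading effect of the number constraint.

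The main obstacle is precisely the \emph{uniform} control $|E(s)|\le C_\varphi R^2\log R$ on the relevant range of $s$: one must show that the discrete-to-continuum comparison and the interaction of the smooth tilt with the hard constraint $\{n(R)\le pR^2\}$ cost no more than $R^2\log R$, and in particular that the quadratic bound $F''(s)\le C\fD(\varphi)/\beta$ persists along the whole tilt (where the free boundary and the saturated edge charge at $\{|w|=1\}$ could in principle inflate fluctuations). A robust way to secure the quadratic bound without solving the perturbed constrained problem exactly is a transport argument: pushing the configuration along the flow of a vector field $\nabla h$ with $\Delta h\propto\varphi$ and expanding the resulting Jacobian together with the energy identity (a loop/Ward identity) exhibits $\Var(X)$ directly in terms of the Dirichlet energy, with the constraint contributing only controllable boundary terms. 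Combining this quadratic bound with the cancellation of the constraint in the partition-function ratio, and optimising $s$, completes the proof.
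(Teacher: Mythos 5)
Your Chernoff skeleton is sound in principle, and you correctly identify the Dirichlet energy $\fD(\varphi)$ as the quantity that must control the quadratic term. But your route to that quadratic term has a genuine gap. You propose to bound $F''(s)=\Var_{F_p,s}(X)$ uniformly along the tilt via linear response of the constrained equilibrium measure ($\partial_s\rho_s\propto -\Delta\varphi/(\beta N)$) plus loop/Ward identities and a transport argument. This is a \emph{perturbative} computation, valid only when $s/(\beta N)$ is small; but the optimization $s\asymp\beta\lambda/\fD(\varphi)$ forces $s$ up to order $\beta R^2$ when $\lambda$ is of order $R^2$ (and the proposition is claimed, and used, for all $\lambda$). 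In that regime the tilted potential $\tfrac12|z|^2-\tfrac{s}{\beta}\varphi(z/R)$ is an order-one perturbation in scaled coordinates: the constrained equilibrium measure and its free boundary move at leading order, the formal density response can become negative, and no rigorous control of $F''(s)$ (let alone via Ward identities for a 2D $\beta$-ensemble \emph{conditioned on the hard constraint} $n(R)\le pR^2$, which destroys the usual integration-by-parts structure) is available. The paper avoids this entirely: its Claim \ref{convexity-of-functional} is a \emph{global} quadratic stability inequality, $I_\alpha(\nu)\ge I_\alpha(\overline{\mu}_p^\alpha)+\tfrac{2\pi}{\fD(\varphi)}\lambda^2$ for every $\nu\in\cF_p\cap\cL_{\varphi,\lambda}$, which follows from strict convexity of $I_\alpha$ (positivity of $-\Sigma$ on signed measures of mass zero, plus Cauchy--Schwarz in Fourier variables), with no perturbative restriction whatsoever. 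If you replace your Hessian argument by this inequality, your Laplace-transform computation closes: the infimum of the tilted functional over $\cF_p$ is then bounded below by $I_\alpha(\overline{\mu}_p^\alpha)-\tfrac{2s}{\beta N}\int\varphi\,\d\overline{\mu}_p^\alpha-\tfrac{s^2\fD(\varphi)}{2\pi\beta^2N^2}$, and Chernoff gives exactly the stated bound --- but at that point your proof is the paper's proof repackaged, since the paper's direct estimate of $\P[F_p(R)\cap\cL]$ in \eqref{prob-lin-stats-upper-bnd} is just the $s$-free version of the same comparison.

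A second, smaller gap: you assert that because the constraint appears in both $Z^{F_p}_s$ and $Z^{F_p}_0$, ``its leading cost cancels in the ratio.'' The cancellation of the value $I_\alpha(\overline{\mu}_p^\alpha)$ is genuine, but exploiting it requires \emph{two-sided} asymptotics at precision $O(N\log N)$ for both constrained partition functions; in particular you need the lower bound $\log Z^{F_p}_0\ge \log Z_N^\beta-\tfrac12\beta N^2\inf_{\cF_p}[I_\alpha-I_\alpha(\overline{\mu}^\alpha_{\mathrm{eq}})]-O(N\log N)$, which does not follow from any ratio cancellation. The paper obtains it (estimate \eqref{low-bnd-prob-zero-defic}, Section \ref{lower-bnd-JLM}) by an explicit construction of well-separated, Fekete-like configurations approximating $\overline{\mu}_p^\alpha$ and integrating the joint density over the resulting set; some such construction is unavoidable in your approach as well and should be stated as an ingredient rather than absorbed into the error term $E(s)$.
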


\begin{remark}
The proposition is non-trivial only for $\lambda \ge C R \sqrt{\log R}$, hence we may assume this holds below.
\end{remark}

The proof (motivated by the LDP approach in \cite{BZ, HP1}) is based on the approximation of the joint density of the particles (at the exponential scale) by a functional acting on probability measures. The (strictly convex, lower semi-continuous) function is given by
\begin{equation}\label{limiting_functional}
I_\alpha(\nu) = \int_\C \frac{|z|^2}{\alpha} \, \d \nu(z) - \Sigma(\nu).
\end{equation}
The global minimizer of the functional (also known as the \emph{equilibrium measure}) is given by
$$ \overline{\mu}^\alpha_{\mathrm{eq}} = \frac1\alpha \indf_{\{|z| \le \sqrt{\alpha}\}} (w) \cdot \frac{m}{\pi},$$
that is, the uniform probability measure on the disk $D(0, \sqrt{\alpha})$.

Consider now the set of measures,
$$ \cF_p = \cF_p(\a) = \left\{ \mu \in \probmeas : \mu(D) \le \frac{p}{\alpha}  \right\}, \quad p\in [0,1), $$
where $\probmeas$ is the set of probability measure on the complex plane $\C$, and $D$ is the unit disk $D = D(0,1) = \{ |z| < 1 \}$. In Section \ref{sec_min_meas} we show that the measure that minimizes the functional $I_\alpha$ over the closed set $\cF_p$ is $\overline{\mu}_p^\alpha$.

We also consider measures which are `far' from the minimizing measure $\overline{\mu}_p^\alpha$. For a test function $\varphi$ we put
$$ \cL_{\varphi, \lambda} = \left\{ \mu \in \probmeas : \left| \int_\C \varphi(w) \, \d \mu (w) - \int_\C \varphi(w) \, \d \overline{\mu}_p^\alpha (w) \right| \ge \lambda \right\}. $$

A key tool required for the proof of Proposition \ref{dev-ineq} is the next claim, which can be regarded as an effective formulation of the fact $I_\alpha$ is strictly convex.

\begin{claim}\label{convexity-of-functional}
For any compactly supported measure $\nu \in \cF_p \cap \cL_{\varphi, \lambda}$, we have

$$ I_\alpha(\nu) \ge I_\alpha(\overline{\mu}_p^\alpha (w))  + \frac{2\pi}{\fD(\varphi)} \lambda^2. $$
\end{claim}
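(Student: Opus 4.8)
The plan is to phrase everything relative to the minimizer. Write $\mu := \overline{\mu}_p^\alpha$ and, for a competitor $\nu \in \cF_p \cap \cL_{\varphi,\lambda}$, set $\sigma := \nu - \mu$; since $\nu$ and $\mu$ are both probability measures, $\sigma$ is a compactly supported signed measure of total mass zero. If $\S(\nu) = -\infty$ then $I_\alpha(\nu) = +\infty$ and there is nothing to prove, so I may assume $\nu$ has finite logarithmic energy. Expanding the quadratic functional $\S$ along $\nu = \mu + \sigma$ gives
\[
I_\alpha(\nu) - I_\alpha(\mu) = \int_\C \Big( \tfrac{|z|^2}{\alpha} - 2 U_\mu(z) \Big)\,\d\sigma(z) \;-\; \iint_{\C^2} \log|z-w|\,\d\sigma(z)\,\d\sigma(w).
\]
The proof then splits into two independent claims: the first (linear) integral is nonnegative, and the second term — the negative logarithmic energy of the zero-mass measure $\sigma$, which is precisely the positive-definite quadratic form encoding the strict convexity of $I_\alpha$ — is bounded below by $\frac{2\pi}{\fD(\varphi)}\lambda^2$.

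For the linear term I would invoke only the minimality of $\mu$ over $\cF_p$ (established in Section \ref{sec_min_meas}) together with convexity of the constraint set. Since $\nu \mapsto \nu(D)$ is linear, $\cF_p$ is convex, so $\mu_t := \mu + t\sigma$ lies in $\cF_p$ for every $t \in [0,1]$. As $t \mapsto I_\alpha(\mu_t)$ is finite and differentiable at $t = 0^+$ with
\[
\frac{\d}{\d t}\Big|_{t=0^+} I_\alpha(\mu_t) = \int_\C \Big( \tfrac{|z|^2}{\alpha} - 2 U_\mu(z) \Big)\,\d\sigma(z),
\]
minimality forces this one-sided derivative to be $\ge 0$, which is exactly the nonnegativity of the linear term. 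Hence $I_\alpha(\nu) - I_\alpha(\mu) \ge -\iint \log|z-w|\,\d\sigma(z)\,\d\sigma(w)$.

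The heart of the matter is the energy lower bound. I would use the classical potential-theoretic identity that for a zero-mass, compactly supported, finite-energy signed measure $\sigma$, the potential $U_\sigma$ satisfies $\Delta U_\sigma = 2\pi\sigma$ and decays like $O(|z|^{-1})$ at infinity, so that integration by parts yields
\[
-\iint_{\C^2} \log|z-w|\,\d\sigma(z)\,\d\sigma(w) = -\int_\C U_\sigma\,\d\sigma = \frac{1}{2\pi}\int_\C |\nabla U_\sigma|^2\,\d m \ge 0 .
\]
Using $\Delta U_\sigma = 2\pi\sigma$ again and integrating by parts against $\varphi \in C^2_c$ gives $\int_\C \varphi\,\d\sigma = -\frac{1}{2\pi}\int_\C \nabla\varphi \cdot \nabla U_\sigma\,\d m$, whence by Cauchy--Schwarz
\[
\Big| \int_\C \varphi\,\d\sigma \Big| \le \frac{1}{2\pi}\,\|\nabla\varphi\|_{L^2}\,\|\nabla U_\sigma\|_{L^2} = \frac{1}{\sqrt{2\pi}}\,\fD(\varphi)^{1/2}\Big( -\iint \log|z-w|\,\d\sigma\,\d\sigma \Big)^{1/2}.
\]
Squaring and inserting the defining inequality $\big|\int \varphi\,\d\sigma\big| \ge \lambda$ of $\cL_{\varphi,\lambda}$ yields $-\iint \log|z-w|\,\d\sigma\,\d\sigma \ge \frac{2\pi}{\fD(\varphi)}\lambda^2$, and combining with the previous paragraph closes the argument.

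The step I expect to demand the most care is making these integration-by-parts identities rigorous when $\sigma$ is merely a finite-energy measure rather than a smooth density: one must justify $-\int U_\sigma\,\d\sigma = \frac{1}{2\pi}\|\nabla U_\sigma\|_{L^2}^2$ and the vanishing of the boundary terms at infinity (here the zero-mass condition is essential, since it upgrades the $\log$-growth of $U_\sigma$ to genuine $O(|z|^{-1})$ decay), typically by mollifying $\sigma_\eps = \sigma * \chi_\eps$ and passing to the limit using lower semicontinuity of the energy. The remaining manipulations — the quadratic expansion of $\S$, the convexity/first-order condition, and the Cauchy--Schwarz estimate — are routine once this potential-theoretic foundation is in place.
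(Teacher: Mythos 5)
Your proposal is correct and follows essentially the same route as the paper's: the paper's proof is merely a pointer to \cite[Claim 11]{GN}, whose argument is exactly yours — quadratic expansion of $I_\alpha$ about the constrained minimizer $\overline{\mu}_p^\alpha$, nonnegativity of the linear term from first-order optimality over the convex set $\cF_p$ (cf.\ Lemma \ref{char_minimiz_Ginib} and Section \ref{sec_min_meas}), and the Cauchy--Schwarz bound $\left|\int\varphi\,\d\sigma\right|^2\le\frac{\fD(\varphi)}{2\pi}\left(-\S(\sigma)\right)$ for zero-mass signed measures, which yields the $\frac{2\pi}{\fD(\varphi)}\lambda^2$ gap. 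The potential-theoretic technicalities you flag (the identity $-\S(\sigma)=\frac{1}{2\pi}\Vert\nabla U_\sigma\Vert_{L^2}^2$, decay at infinity, and finiteness of cross energies) are the standard ones, handled by mollification or the Fourier representation as in \cite[Lemma I.1.8]{SaT}, and your constants check out.
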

\begin{proof}
See the similar \cite[Claim 11]{GN}.
\end{proof}

\subsection{Approximation of the joint density}\label{approx-joint-intens}

We start with an asymptotic estimate for the normalizing constant $Z_N^\beta$ (see \cite[Corollary 1.5]{LS1}).

\begin{proposition}\label{beta_ans_norm_const}
We have
$$ \log Z_N^\beta = -\frac12 \beta N^2 \cdot I_\alpha(\overline{\mu}^\alpha_{\mathrm{eq}}) + O(N \log N),$$
where the error term depends on $\beta$.
\end{proposition}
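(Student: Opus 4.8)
The plan is to read Proposition~\ref{beta_ans_norm_const} as the leading-order (mean-field) free energy of the Coulomb gas \eqref{betaGinibDensity}, and to obtain it by first matching the combinatorial partition function to the continuum functional $I_\alpha$ and then importing the sharp next-order estimate of \cite{LS1}. The opening move is a macroscopic change of variables $z_j = R\,w_j$ with $R=\sqrt{N/\alpha}$, chosen precisely so that the confined cloud (of radius $\sim\sqrt{N}$) is rescaled onto the disk of radius $\sqrt{\alpha}$ carrying $\overline{\mu}^\alpha_{\mathrm{eq}}$. Using $dm(z_j)=R^2\,dm(w_j)$, $|z_j-z_k|=R|w_j-w_k|$ and $\sum_j|z_j|^2=\tfrac{N}{\alpha}\sum_j|w_j|^2$, the density \eqref{betaGinibDensity} factors as a purely deterministic scaling prefactor $R^{2N+\beta\binom{N}{2}}$ times $\prod_{j<k}|w_j-w_k|^\beta\exp(-\tfrac{\beta N}{2\alpha}\sum_j|w_j|^2)$. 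Writing the second factor through the empirical measure $\mu_N=\tfrac1N\sum_j\del_{w_j}$, its exponent equals exactly $-\tfrac{\beta}{2}N^2\,I_\alpha(\mu_N)$ once the divergent diagonal ($j=k$) self-interaction is removed; this is the algebraic identity tying $Z_N^\beta$ to the functional \eqref{limiting_functional}.

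With this identification in hand, the $N^2$-order behavior of the $w$-integral is governed by the minimizer of $I_\alpha$. Since the excerpt already records that $\overline{\mu}^\alpha_{\mathrm{eq}}$ is the global minimizer, a Laplace/large-deviations argument (of the type underlying \cite{BZ,HP1}) gives that the integral is $\exp(-\tfrac{\beta}{2}N^2 I_\alpha(\overline{\mu}^\alpha_{\mathrm{eq}})+o(N^2))$. To make this precise at the crude level I would prove matching bounds: the upper bound by discretizing $\moc$ and using lower semicontinuity together with the strict convexity encoded in Claim~\ref{convexity-of-functional}, so that empirical measures far from $\overline{\mu}^\alpha_{\mathrm{eq}}$ are exponentially suppressed; the lower bound by restricting the integral to configurations with one particle confined to each cell of a fine partition adapted to $\overline{\mu}^\alpha_{\mathrm{eq}}$ and bounding the resulting product integral from below. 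One then collects the deterministic prefactor $R^{2N+\beta\binom{N}{2}}$ together with this $N^2$-term; keeping track of the cancellations between the $\log\alpha$ produced by the prefactor and the $\log\alpha$ sitting inside $I_\alpha(\overline{\mu}^\alpha_{\mathrm{eq}})$ is exactly the bookkeeping needed to land on the stated normalization.

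The genuine content of the proposition, however, is the \emph{sharpness} of the error, namely $O(N\log N)$ rather than the $o(N^2)$ that the soft Laplace argument yields, and here I would invoke \cite[Corollary~1.5]{LS1} directly instead of reproving it. The entire difficulty lives in the passage from the continuum energy $I_\alpha$ to the discrete Hamiltonian: the logarithmic kernel is singular on the diagonal, so replacing $\sum_{j\neq k}\log|w_j-w_k|$ by $N^2\Sigma(\mu_N)$ discards a self-energy of order $N\log N$, and one must in addition account for the microscopic thermal fluctuations of the points about the macroscopic profile. Quantifying these contributions exactly is precisely what the renormalized-energy machinery of Lebl\'e--Serfaty accomplishes, and their Corollary~1.5 packages the conclusion that every correction beyond the mean-field term $-\tfrac{\beta}{2}N^2 I_\alpha(\overline{\mu}^\alpha_{\mathrm{eq}})$ is $O(N\log N)$, with a $\beta$-dependent implied constant. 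Thus the main obstacle---the next-to-leading-order analysis---is outsourced to \cite{LS1}, and the remaining work is the change-of-variables bookkeeping and the verification that the energy functional generated by the rescaling is indeed $I_\alpha$.
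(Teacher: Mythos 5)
Your overall route coincides with the paper's: the paper gives no argument for this proposition beyond the parenthetical citation, so delegating the sharp $O(N\log N)$ error to \cite[Corollary 1.5]{LS1} is exactly what the authors do, and your change of variables $z_j = R w_j$ is the right way to connect the normalization of \eqref{betaGinibDensity} to the functional $I_\alpha$ of \eqref{limiting_functional}. (The intermediate soft Laplace argument is redundant once \cite{LS1} is invoked, but that is harmless.)

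However, your final bookkeeping step, as stated, fails, and the failure is not cosmetic. The prefactor you correctly isolate contributes
$$ \log R^{\,2N+\beta\binom{N}{2}} \;=\; \frac{\beta}{2}N^2\log R + O(N\log N) \;=\; \frac{\beta}{4}N^2\log N - \frac{\beta}{4}N^2\log\alpha + O(N\log N), $$
while $-\frac{\beta}{2}N^2 I_\alpha(\overline{\mu}^\alpha_{\mathrm{eq}}) = -\frac{3\beta}{8}N^2 + \frac{\beta}{4}N^2\log\alpha$, since $I_\alpha(\overline{\mu}^\alpha_{\mathrm{eq}}) = \frac34 - \frac12\log\alpha$. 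The two $\log\alpha$ terms do cancel, as you say, but the term $\frac{\beta}{4}N^2\log N$ survives, and it is not $O(N\log N)$; so carried out honestly, your computation gives $\log Z_N^\beta = \frac{\beta}{2}N^2\log R - \frac{\beta}{2}N^2 I_\alpha(\overline{\mu}^\alpha_{\mathrm{eq}}) + O(N\log N)$, not the stated formula, and no cancellation can "land on the stated normalization." A sanity check confirms this: at $\beta=2$, $\alpha=1$ the normalization of \eqref{betaGinibDensity} is exactly $N!\,\pi^N\prod_{k=0}^{N-1}k!$, whose logarithm is $\frac{N^2}{2}\log N - \frac34 N^2 + O(N\log N)$, carrying the $N^2\log N$ term. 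The resolution is that the proposition (like \cite[Corollary 1.5]{LS1} itself) must be read as a statement about the partition function in the mean-field normalization, i.e. the integral in the $w$-variables with confinement $\frac{\beta N}{2\alpha}\sum_j|w_j|^2$ --- precisely your rescaled integral \emph{without} the Jacobian/Vandermonde prefactor. This is also how the estimate is used in \eqref{prob-lin-stats-upper-bnd}, where the factor $L^{\beta\binom{N}{2}+2N}$ is kept on the other side, absorbed into $I^\star$. So you should either state and prove the result for the rescaled partition function (then \cite{LS1} applies verbatim with $V(w)=|w|^2/\alpha$ and there is no prefactor to track), or, if you insist on the literal $Z_N^\beta$ of \eqref{betaGinibDensity}, include the extra term $\frac{\beta}{2}N^2\log R$ in the asymptotics.
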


It is technically convenient to restrict the consideration to particles in a finite box. For example, one can easily prove that
$$
\P \left[\max_{j\in\{1,\dots,N\}}|z_j| \ge R^3  \right] \le \exp\left(-\frac{\beta}{4} R^6\right), \quad \forall R \ge R_0(\beta).
$$
Hereafter, we assume that $\max_j|z_j| \le R^3$.
\subsubsection{Smoothed empirical measure}
Given $\vec{w} = (w_1, \dots, w_N) \in \C^N$, let $\mu_{\vec{w}} = \frac1N \sum_{j=1}^N \delta_{w_j}$ be the empirical probability measure of the points. For a parameter $t = t(R)$ (which is chosen to be $R^{-C_1}$ for sufficiently large constant $C_1>0$), we consider the smoothed empirical measure
$$
\mu^t_{\vec{w}} = \mu_{\vec{w}} \star m_{|z|=t},
$$
where $m_{|z|=t}$ is the uniform probability measure on $|z|=t$. It is not difficult to verify the following (cf. \cite[Claim 4]{GN}).

\begin{claim}\label{approx_dens}
If $C_1$ is chosen to be sufficiently large, then
$$
\frac{1}{N^2} \sum_{j\ne k} \log|w_j - w_k| \le \Sigma(\mu^t_{\vec{w}}) + \frac{C \log \frac1t}{N} \le \Sigma(\mu^t_{\vec{w}}) + \frac{C \log R}{N},
$$
and
$$
\frac{1}{N} \sum_{j=1}^N |w_j|^2 = \int_\C |w|^2 \, \d \mu^t_{\vec{w}} (w) + O(R^{-2}).
$$

\end{claim}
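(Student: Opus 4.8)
The plan is to reduce both estimates to understanding the single building block $\delta_a \star m_{|z|=t}$, the uniform probability measure on the circle of radius $t$ centered at $a$, and then to expand the functionals $\int|w|^2\,\d\mu^t_{\vec{w}}$ and $\Sigma(\mu^t_{\vec{w}})$ as sums over pairs of indices $(j,k)$, writing $\mu^t_{\vec{w}} = \frac1N\sum_{j=1}^N(\delta_{w_j}\star m_{|z|=t})$. The quadratic statement is immediate: averaging $|w|^2$ over the circle of radius $t$ about $w_j$ gives $|w_j|^2 + t^2$, the cross term vanishing by rotational symmetry, so
\[ \int_\C |w|^2 \, \d\mu^t_{\vec{w}}(w) = \frac1N\sum_{j=1}^N |w_j|^2 + t^2. \]
Since $t = R^{-C_1}$ with $C_1 \ge 1$, we have $t^2 = O(R^{-2})$, which yields the second assertion (and in fact needs no control on $\max_j|w_j|$).

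The logarithmic inequality rests on the classical identity for the logarithmic potential of a circle,
\[ U_{\delta_a \star m_{|z|=t}}(z) = \int \log|z - \zeta|\, \d(\delta_a\star m_{|z|=t})(\zeta) = \log\max(|z-a|,\,t), \]
which equals $\log|z-a|$ for $|z-a|\ge t$ and the constant $\log t$ inside. The key structural fact I would use is that this function is \emph{subharmonic} on all of $\C$: it is the potential of the positive measure $\delta_a\star m_{|z|=t}$, equivalently the maximum of the subharmonic functions $\log|z-a|$ and the constant $\log t$. Setting $G(a,b) := \iint \log|z-w|\,\d(\delta_a\star m_{|z|=t})(z)\,\d(\delta_b\star m_{|z|=t})(w)$ and applying the sub-mean-value inequality to $z\mapsto \log\max(|z-a|,t)$, averaged over the circle of radius $t$ about $b$, gives the pairwise lower bound
\[ G(a,b) \ge \log\max(|a-b|,\,t) \ge \log|a-b|, \qquad a\ne b, \]
while on the diagonal the two circles coincide and every point is at distance exactly $t$, so $G(a,a) = \log t$.

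It then remains to assemble these pieces. Expanding the energy and separating the diagonal,
\[ \Sigma(\mu^t_{\vec{w}}) = \frac{1}{N^2}\sum_{j\ne k} G(w_j,w_k) + \frac{1}{N^2}\sum_{j=1}^N G(w_j,w_j) = \frac{1}{N^2}\sum_{j\ne k} G(w_j,w_k) + \frac{\log t}{N}, \]
and the pairwise bound $G(w_j,w_k)\ge \log|w_j-w_k|$ rearranges to
\[ \frac{1}{N^2}\sum_{j\ne k}\log|w_j-w_k| \le \Sigma(\mu^t_{\vec{w}}) - \frac{\log t}{N} = \Sigma(\mu^t_{\vec{w}}) + \frac{\log(1/t)}{N}. \]
Since $\log(1/t) = C_1\log R$, absorbing $C_1$ into a generic constant $C$ delivers both inequalities of the claim at once.

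I do not expect a genuine analytic obstacle here; the work is entirely in the correct bookkeeping. The one point that deserves care is the diagonal contribution $\frac{\log t}{N}$, which is \emph{negative} and is precisely what produces the favorable $\log(1/t)/N$ gain after rearrangement — getting its sign and placement right is the crux. The second subtlety is verifying that the sub-mean-value step remains valid when $|a-b| < t$, i.e.\ when the averaging circle encloses the logarithmic singularity at $a$; this is legitimate exactly because $\log\max(|z-a|,t)$ is subharmonic across the whole plane (the singularity has been regularized away into the constant $\log t$), so the mean-value inequality applies unconditionally.
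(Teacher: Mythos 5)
Your proof is correct, and it is essentially the argument the paper has in mind: the paper defers this claim to \cite[Claim 4]{GN}, whose verification is exactly your computation — the potential of the circle measure $\delta_a \star m_{|z|=t}$ equals $\log\max(|z-a|,t)$, so off-diagonal pair energies dominate $\log|w_j-w_k|$ while each diagonal term contributes $\log t$, and the quadratic statement follows from the exact identity $\int |w|^2\,\d(\delta_{w_j}\star m_{|z|=t})(w)=|w_j|^2+t^2$. (Only your phrasing of the diagonal term is slightly off — points on a common circle are not pairwise at distance $t$ — but the intended reading, that the potential is evaluated at points at distance exactly $t$ from the center, gives $G(a,a)=\log t$ correctly.)
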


Instead of working with the original particles $z_j$ it is more convenient to work with a scaled version. We set
$$
L = R - t, \quad \mbox{and } \vec{w} = \frac1L \vec{z}.
$$
With this scaling, our assumptions that $n(R) \le p R^2$ and $\left| n(\varphi; R) - R^2 \int_\C \varphi(w) \, \d \mu_p^\alpha(w) \right| \ge \lambda$ imply that
$$ \mu^t_{\vec{w}}(D) \le \frac{p}{\alpha}, $$
and
$$ \left| \int_\C \varphi(z) \, \d \mu^t_{\vec{w}}(z) - \int_\C \varphi(w) \, \d \overline{\mu}_p^\alpha (w) \right| \ge \frac{\lambda}{2N}, $$
where we used the fact that $N = \alpha R^2$, the (H{\"o}lder) continuity of $\varphi$, and our assumption that $\lambda \ge CR \sqrt{\log R}$.

\subsubsection{Upper bound for the joint density}
Define the following sets in $\C^N$:
\begin{eqnarray*}
N_p = N_p(t) & = & \left\{ \vec{w} \in \C^N : \mu^t_{\vec{w}}(D) \le \frac{p}{\alpha} \right\},\\
L_{\varphi,\lambda} = L_{\varphi,\lambda}(t) & = & \left\{ \vec{w} \in \C^N : \left| \int_\C \varphi(z) \, \d \mu^t_{\vec{w}}(z) - \int_\C \varphi(w) \, \d \overline{\mu}_p^\alpha (w) \right| \ge \frac{\lambda}{2N} \right\}.
\end{eqnarray*}
By the arguments outlined above, we have
\begin{align*}
\P \left[F_p(R) \cap \left\{\left| n(\varphi; R) - R^2 \int_\C \varphi(w) \, \d \mu_p^\alpha(w) \right| \ge \lambda \right\} \right]\quad\quad\quad\quad\quad\quad\quad\quad\quad\quad\\
\le \exp\left(\frac12 \beta N^2 \left[I_\alpha(\overline{\mu}^\alpha_{\mathrm{eq}}) - \inf_{\vec{w} \in N_p\cap L_{\varphi,\lambda}} I^\star (\vec{w})  \right] + O(N \log N) \right),
\end{align*}
where
$$
I^\star(\vec{w}) = \frac{L^2}{N^2} \sum_{j=1}^N |w_j|^2 - \frac{1}{N^2} \sum_{j\ne k} \log|w_j - w_k|.
$$

Claim \ref{approx_dens} gives the following bound
$$ I^\star(\vec{w}) \ge I_{\alpha^\prime}(\mu^t_{\vec{w}}) - \frac{C\log R}{N}, $$
where $I_\alpha$ is the limiting functional defined in \eqref{limiting_functional}, and $\alpha^\prime = \frac{N}{L^2} = \alpha (1 + O(R^{-2}))$. As one can show that
$$
\inf_{\mu\in \cF_p \cap \cL_{\varphi,\lambda}} I_{\alpha^\prime} (\mu) = \inf_{\mu\in \cF_p \cap \cL_{\varphi,\lambda}} I_{\alpha}(\mu) + O(R^{-2}),
$$
we conclude that
\begin{align}
\P \left[F_p(R) \cap \left\{\left| n(\varphi; R) - R^2 \int_\C \varphi(w) \, \d \mu_p^\alpha(w) \right| \ge \lambda \right\} \right]\quad\quad\quad\quad\quad\quad\quad\quad\quad\quad \nonumber \\
\le \exp\left(- \frac12 \beta N^2 \inf_{\mu\in \cF_p \cap \cL_{\varphi,\widetilde{\lambda}}} \left[I_{\alpha} (\mu) - I_\alpha(\overline{\mu}^\alpha_{\mathrm{eq}}) \right] + O(N \log N) \right), \label{prob-lin-stats-upper-bnd}
\end{align}
where $\widetilde{\lambda} = \frac{\lambda}{2 N}$. 

\subsection{Proof of Proposition \ref{dev-ineq}}

Using Claim \ref{convexity-of-functional} we obtain
\begin{equation}\label{funct-low-bnd}
\inf_{\mu\in \cF_p \cap \cL_{\varphi,\widetilde{\lambda}}} \left[I_{\alpha} (\mu) - I_\alpha(\overline{\mu}_p^\alpha) \right] \ge \frac{2\pi}{\fD(\varphi)} \cdot \left(\frac{\lambda}{2N}\right)^2
\ge \frac{C \lambda^2}{\fD(\varphi) N^2}.
\end{equation}
In Section \ref{lower-bnd-JLM} we show the lower bound estimate
\begin{equation}\label{low-bnd-prob-zero-defic}
\P\left[F_p(R)\right] \ge \exp\left(-\frac12 \beta N^2 \cdot \inf_{\mu \in \cF_p} \left[ I_\alpha(\mu) - I_\alpha(\overline{\mu}^\alpha_{\mathrm{eq}}) \right] + O(N \log N) \right).
\end{equation}
To prove Proposition \ref{dev-ineq} we combine \eqref{prob-lin-stats-upper-bnd}, \eqref{funct-low-bnd}, \eqref{low-bnd-prob-zero-defic}, and the fact that $\overline{\mu}_p^\alpha$ minimizes $I_\a$ over $\cF_p$.

\subsection{Outline of the proof of Theorem \ref{JLM-beta-ens}}
The proof of the upper bound of Theorem \ref{JLM-beta-ens} follows the same lines as the proof of Proposition \ref{dev-ineq} (ignoring the condition on linear statistics).
From Claim \ref{func-value-at-mu-p} we find (cf. \cite{Sh1})
\begin{eqnarray*}
\P \left[F_p(R)\right] & \le &
\exp \left( -\frac12 \beta N^2 \cdot \frac{1}{4\alpha^2} \left|2p^2 \log p - (p-1)(3p-1)\right| + O(N \log N)\right) \\
& \le & \exp \left( -\frac18 \beta R^4 \left|2p^2 \log p - (p-1)(3p-1)\right| + O(R^2 \log R)\right).
\end{eqnarray*}
To obtain the upper bound we take $p = 1 - b R^{a-2}$ (because of the error term the result is not trivial for $\frac43 < a < 2$).
\begin{remark}
Notice that for $p$ near $1$ we have
$$2p^2 \log p - (p-1)(3p-1) = \frac23 (p-1)^3 (1 + O(p-1)).$$
\end{remark}

\subsubsection{Lower bound}\label{lower-bnd-JLM}
The proof of the lower bound is similar to the proof of the lower bound for the hole probability in \cite{AR}, and we will only sketch the idea.
We again choose $t = R^{-C_1}$ for some sufficiently large $C_1>0$, and scale the points as follows
$$ \vec{w} = \frac1R \vec{z}. $$
One can construct a set of points $\vec{w}^0$ with the following properties:
\begin{itemize}
\item There are at most $p R^2$ points inside $D$ (the open unit disk).
\item Point to point separation: $|w_j^0 - w_k^0| \ge C t$ for all $j \ne k$.
\item Separation from boundary: $\bigcup_{w_j^0 \in D} D(w_j^0, t) \subset D$.
\item The points approximate the minimizing measure
\begin{equation}\label{min-meas-approx}
\left|U_{\mu_{\vec{w}^0}^t} (z) - U_{\overline{\mu}_p^\alpha} (z)\right| \le \frac{C \log N}{N} ,\quad \forall z \in \C.
\end{equation}
\end{itemize}

\begin{remark}
One can construct such a `good' set of points directly, using the radial symmetry of the problem. Another possibility is to use Fekete points for (essentially) the measure $\overline{\mu}_p^\alpha$ (that is, weighted Fekete points with respect to the weight given by $U_{\overline{\mu}_p^\alpha}$). A small difficulty with the second approach is that $p$ can depend on $N$ (the number of points).
\end{remark}

Next we define a set of `good configurations' of points:
$$
\cG_N = \left\{ \vec{w} \in \C^N : |w_j - w_j^0| < \frac{t}{2} \quad \forall j \in \{1,\dots,N\} \right\}.
$$
By the separation of the points, and using \eqref{min-meas-approx}, we have for $\vec{w} \in \cG_N$:
$$
\frac{1}{N^2} \sum_{j\ne k} \log|w_j - w_k| = \Sigma(\overline{\mu}_p^\alpha) + O\left(\frac{\log N}{N}\right),
$$
and
$$
\frac1N \sum_{j=1}^N |w_j|^2 = \int_\C |w|^2 \, \d \overline{\mu}_p^\alpha (w) + O\left(\frac{\log N}{N}\right).
$$
Also notice that the volume (in $\C^N$) of $\cG_N$ is at least $\exp(-C N \log N)$. The lower bound is obtained by integrating the (scaled) joint density over the set $\cG_N$, using the above estimates, and Claim \ref{func-value-at-mu-p}.

\subsection{Minimizing measures}\label{sec_min_meas}

The following lemma gives a characterization of the (constrained) minimizers of the functional $I_\alpha$ (cf. \cite[Lemma 10]{GN}).

\begin{lemma}\label{char_minimiz_Ginib}
Let $\cC \subset \probmeas$ be a closed and convex set of probability measures. The probability measure $\mu_0 \in \cC$ is the (unique) minimizer of $I_\alpha$ over $\cC$ if, and only if, for all $\mu \in \cC$,
$$ \int \frac{|z|^2}{2 \alpha} \, \d \mu_0 (z) - \Sigma(\mu_0) \le \int \frac{|z|^2}{2\alpha} \, \d \mu(z) - \int U_{\mu_0}(z) \, \d \mu(z). $$
\end{lemma}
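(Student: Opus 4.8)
The plan is to exploit that $I_\alpha$ is a quadratic functional whose quadratic part is governed by the logarithmic energy, and to recognize the asserted inequality as the first-order (variational) optimality condition for a convex minimization over a convex set. First I would fix $\mu_0, \mu \in \cC$, set $\eta = \mu - \mu_0$ (a signed measure of total mass zero), and form the segment $\mu_s = (1-s)\mu_0 + s\mu = \mu_0 + s\eta$, which lies in $\cC$ for $s \in [0,1]$ by convexity. Expanding the two terms of $I_\alpha$ and using that $\Sigma$ is the symmetric bilinear form $\Sigma(\nu_1,\nu_2) = \iint \log|z-w|\,\d\nu_1\,\d\nu_2$, for which $\int U_{\mu_0}\,\d\eta = \Sigma(\mu_0,\eta)$, one obtains the exact quadratic identity
$$I_\alpha(\mu_s) = I_\alpha(\mu_0) + s\Big[\int \tfrac{|z|^2}{\alpha}\,\d\eta - 2\int U_{\mu_0}\,\d\eta\Big] - s^2\,\Sigma(\eta),$$
valid whenever the quantities are finite (if $\int |z|^2\,\d\mu = \infty$ or $\Sigma(\mu) = -\infty$ then $I_\alpha(\mu) = +\infty$ and the asserted inequality is trivially true, so I may assume $\mu$ has finite second moment and finite energy). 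Dividing the bracketed coefficient by $2$ and using $\Sigma(\mu_0) = \int U_{\mu_0}\,\d\mu_0$ shows that it equals twice the amount by which the right-hand side of the claimed inequality exceeds the left-hand side; thus the lemma's condition is precisely the statement that the right derivative $\frac{\d}{\d s}\big|_{s=0^+} I_\alpha(\mu_s) \ge 0$ for every $\mu \in \cC$.

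The necessity direction is then immediate: if $\mu_0$ minimizes $I_\alpha$ over $\cC$, then $I_\alpha(\mu_s) \ge I_\alpha(\mu_0)$ for all $s \in (0,1]$, so dividing by $s$ and letting $s \downarrow 0$ in the displayed identity yields nonnegativity of the linear coefficient, which is the asserted inequality. For sufficiency I would evaluate the same identity at $s = 1$, so that $\mu_1 = \mu$ and
$$I_\alpha(\mu) = I_\alpha(\mu_0) + \Big[\int \tfrac{|z|^2}{\alpha}\,\d\eta - 2\int U_{\mu_0}\,\d\eta\Big] - \Sigma(\eta).$$
The bracketed term is $\ge 0$ by hypothesis, so it remains only to control the quadratic term $-\Sigma(\eta)$.

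The crux, and the step I expect to be the main obstacle, is the strict positivity of the quadratic form: for a signed measure $\eta$ of total mass zero and finite energy one has $-\Sigma(\eta) = \iint \log\frac{1}{|z-w|}\,\d\eta(z)\,\d\eta(w) \ge 0$, with equality if and only if $\eta = 0$. This is the logarithmic energy principle (the $\log$ kernel is positive definite on the subspace of compactly supported mass-zero measures, see Saff--Totik), and it encodes exactly the strict convexity of $I_\alpha$ already quoted before \eqref{limiting_functional}. Granting it, the identity at $s=1$ gives $I_\alpha(\mu) \ge I_\alpha(\mu_0)$, with strict inequality unless $\eta = 0$, i.e. unless $\mu = \mu_0$; hence $\mu_0$ is the unique minimizer over $\cC$, completing the argument. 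The technical care needed here is to justify the energy principle and the bilinear expansion when $\mu_0$ or $\mu$ fail to be compactly supported; this I would handle by a standard truncation and approximation argument reducing to compactly supported finite-energy measures, together with the observation above that measures violating the finiteness assumptions satisfy the claimed inequality trivially.
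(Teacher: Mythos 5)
Your proof follows essentially the same route as the paper's: the identical quadratic expansion of $I_\alpha$ along the segment $(1-s)\mu_0+s\mu$, the Saff--Totik energy principle ($-\Sigma(\eta)>0$ for nonzero mass-zero signed measures of finite energy) as the expression of strict convexity, the first-order condition at $s=0^+$ for the ``only if'' direction, and, for the ``if'' direction, evaluation at $s=1$ where the paper instead stays at $t\in(0,1)$ and invokes convexity of $I_\alpha$ --- an immaterial difference. On the direction the paper actually uses later (the inequality implies $\mu_0$ is the unique minimizer, applied to $\overline{\mu}_p^\alpha$ over $\cF_p$), your argument is correct, subject to the same truncation caveats the paper hides in its ``without loss of generality.''

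However, one step of yours is false as stated: the claim that if $\int|z|^2\,\d\mu=\infty$ or $\Sigma(\mu)=-\infty$, so that $I_\alpha(\mu)=+\infty$, then ``the asserted inequality is trivially true.'' The right-hand side of the asserted inequality, $\int\frac{|z|^2}{2\alpha}\,\d\mu-\int U_{\mu_0}\,\d\mu$, is linear in $\mu$ and does not involve $\Sigma(\mu)$ at all; for a point mass it is finite, so nothing is trivial. For sufficiency the slip is harmless, since what is actually trivial there is the conclusion $I_\alpha(\mu)=+\infty>I_\alpha(\mu_0)$, not the inequality. For necessity it is a genuine gap, and in fact the ``only if'' direction fails in the stated generality. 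Example: take $\alpha>1$, let $\nu_0$ be the uniform probability measure on the closed unit disk, fix $z_0$ with $|z_0|=1$, and let $\cC=\{(1-t)\nu_0+t\delta_{z_0}:\ t\in[0,1]\}$, a closed convex subset of $\probmeas$. Its unique minimizer is $\mu_0=\nu_0$, since every other element of $\cC$ has an atom and hence $I_\alpha=+\infty$; yet for $\mu=\delta_{z_0}$ the right-hand side equals $\frac{1}{2\alpha}-U_{\nu_0}(z_0)=\frac{1}{2\alpha}$, while the left-hand side equals $\frac{1}{4\alpha}+\frac14>\frac{1}{2\alpha}$. So the necessity direction needs a restriction (e.g.\ to competitors $\mu$ with finite energy and finite second moment, which is all your $s\downarrow 0$ argument actually delivers, or to sets $\cC$ whose finite-energy elements are dense along segments, as is the case for $\cF_p$). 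In fairness, the paper offers no proof of this direction either (``the other direction is clear''), and its application only ever invokes sufficiency; but your reduction, as written, is the one concretely false sentence in an otherwise correct write-up.
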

\begin{proof}
Let $\mu \in \cC$ be a probability measure such that $\mu \ne \mu_0$. Without loss of generality we assume $\mu$ has compact support and finite logarithmic energy. It is known (\cite[Lemma I.1.8]{SaT}) that $- \Sigma(\mu - \mu_0) > 0$.

For $t \in (0,1)$ consider the measure $\mu_t = (1 - t) \mu_0 + t \mu \in \cC$, and expand $I_\alpha(\mu_t)$ to get:
\begin{align*}
I_\alpha(\mu_t) & = I_\alpha(\mu_0) + 2t \left( \int \frac{|z|^2}{2\alpha} \, \d \mu(z) - \int U_{\mu_0}(z) \, \d \mu(z) - \int \frac{|z|^2}{2 \alpha} \, \d \mu_0 (z) + \Sigma(\mu_0) \right)\\
& + t^2 \left[-\Sigma(\mu - \mu_0)\right].
\end{align*}
Therefore, if the linear term in $t$ is non-negative, then $I_\alpha(\mu_t) > I_\alpha(\mu_0)$, and this implies (from the convexity of $I_\alpha$) that $I_\alpha(\mu) > I_\alpha(\mu_0)$. The other direction is clear.
\end{proof}

We now wish to find the minimizing measure of the functional $I_\a$ over the set
$$\cF_p = \cF_p(\a) = \left\{ \mu \in \probmeas : \mu(D) \le \frac{p}{\alpha} \right\},$$
which is a closed and convex subset of $\probmeas$ (recall that $D$ denotes the \emph{open} unit disk).

We argue that the following probability measure is the minimizer
$$ \overline{\mu}_p^\alpha = \frac1\alpha \left(\indf_{\{|z| \le \sqrt{p}\}}  + \indf_{\{1 \le |z| \le \sqrt{\alpha}\}} \right) \frac{m}{\pi} + \frac{1-p}{\alpha} m_\T. $$
A simple calculation gives the values of the logarithmic potential on the support
$$ U_{\overline{\mu}_p^\alpha} (z) = \frac{|z|^2}{2 \alpha} + \frac{\log \alpha - 1}{2} + 
\begin{cases}
c_{1} & ,\,\left|z\right|\le\sqrt{p};\\
0 & ,\,1\le\left|z\right|\le\sqrt{\alpha},
\end{cases}
$$
where $c_1 = \frac{p(\log p - 1) + 1}{2 \alpha} > 0$.
 It is also not difficult to see that
$$ U_{\overline{\mu}_p^\alpha} (z) < \frac{|z|^2}{2 \alpha} + \frac{\log \alpha - 1}{2} + 
\begin{cases}
c_{1} & ,\, \sqrt{p} < \left|z\right| < 1;\\
0 & ,\, \sqrt{\alpha} < \left|z\right|.
\end{cases}
$$

The properties above, give
$$ \int \frac{|z|^2}{2 \alpha} \, \d \overline{\mu}_p^\alpha (z) - \Sigma(\overline{\mu}_p^\alpha) = 
\int \left[\frac{|z|^2}{2 \alpha} - U_{\overline{\mu}_p^\alpha}(z) \right] \, \d \overline{\mu}_p^\alpha (z) = 
-\frac{\log \a -1}{2} - c_1 \frac{p}{\a}.
$$

On the other hand, for any $\mu \in \probmeas$, such that $\mu(D) \le \frac{p}{\a}$, we have
$$ \int \left[\frac{|z|^2}{2 \alpha} - U_{\overline{\mu}_p^\alpha}(z) \right] \, \d \mu (z) \ge 
-\frac{\log \a -1}{2} - \int_D c_1 \, \d \mu \ge -\frac{\log \a -1}{2} - c_1 \frac{p}{\a}.
$$

Finally, it is straightforward to evaluate the functional $I_\alpha$ for $\overline{\mu}_p^\alpha$ (cf. \cite{Sh1}).

\begin{claim}\label{func-value-at-mu-p}
We have for $p\in [0,1)$
$$
I_\alpha(\overline{\mu}_p^\alpha) = I_\alpha(\overline{\mu}^\alpha_{\mathrm{eq}}) + \frac{1}{4\alpha^2} \left|2p^2 \log p - (p-1)(3p-1)\right|.
$$
\end{claim}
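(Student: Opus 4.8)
The plan is to compute both $I_\alpha(\overline{\mu}_p^\alpha)$ and $I_\alpha(\overline{\mu}^\alpha_{\mathrm{eq}})$ explicitly and subtract, exploiting the fact that most of the potential-theoretic work is already done in Section \ref{sec_min_meas}. First I would split the functional as
\[
I_\alpha(\nu) = \frac{1}{2\alpha}\int_\C |z|^2\,\d\nu(z) + \left( \int_\C \frac{|z|^2}{2\alpha}\,\d\nu(z) - \Sigma(\nu) \right),
\]
so that the second, ``mixed'' term is exactly the quantity evaluated above. For $\nu = \overline{\mu}_p^\alpha$ it equals $-\frac{\log\alpha - 1}{2} - c_1\frac{p}{\alpha}$, and for $\nu = \overline{\mu}^\alpha_{\mathrm{eq}}$ the identical computation applies with the constant $c_1$ set to $0$ (since the equilibrium potential satisfies $U_{\overline{\mu}^\alpha_{\mathrm{eq}}}(z) = \frac{|z|^2}{2\alpha} + \frac{\log\alpha-1}{2}$ throughout $\{|z|\le\sqrt\alpha\}$), giving simply $-\frac{\log\alpha-1}{2}$. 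Hence the mixed terms contribute a difference of exactly $-c_1\frac{p}{\alpha}$ to $I_\alpha(\overline{\mu}_p^\alpha) - I_\alpha(\overline{\mu}^\alpha_{\mathrm{eq}})$.

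It then remains to handle the first term, the second moments, which I would compute by elementary integration in polar coordinates against the three pieces of each measure. The annulus and disk pieces of $\overline{\mu}_p^\alpha$ contribute $\frac{\alpha^2 - 1}{2\alpha}$ and $\frac{p^2}{2\alpha}$ respectively, and the unit-circle piece of mass $\frac{1-p}{\alpha}$ contributes $\frac{1-p}{\alpha}$; summing gives $\int_\C|z|^2\,\d\overline{\mu}_p^\alpha = \frac{(p-1)^2 + \alpha^2}{2\alpha}$, while the uniform measure $\overline{\mu}^\alpha_{\mathrm{eq}}$ gives $\int_\C|z|^2\,\d\overline{\mu}^\alpha_{\mathrm{eq}} = \frac{\alpha}{2}$. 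Dividing by $2\alpha$ and subtracting, the first-term difference is $\frac{(p-1)^2}{4\alpha^2}$. Combining with the mixed-term difference and substituting $c_1 = \frac{p(\log p - 1) + 1}{2\alpha}$ yields
\[
I_\alpha(\overline{\mu}_p^\alpha) - I_\alpha(\overline{\mu}^\alpha_{\mathrm{eq}}) = \frac{(p-1)^2}{4\alpha^2} - \frac{p^2\log p - p^2 + p}{2\alpha^2} = \frac{3p^2 - 4p + 1 - 2p^2\log p}{4\alpha^2}.
\]

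Finally I would reconcile this with the stated form. Since $(p-1)(3p-1) = 3p^2 - 4p + 1$, the numerator equals $-\bigl(2p^2\log p - (p-1)(3p-1)\bigr)$, so the claim reduces to checking that $g(p) := 2p^2\log p - (p-1)(3p-1) \le 0$ on $[0,1)$, which is the one genuinely non-mechanical point. I would verify this by differentiation: $g'(p) = 4(p\log p - p + 1)$, and the inner factor is positive on $(0,1)$ because its own derivative is $\log p < 0$ and it vanishes at $p=1$; hence $g$ is increasing on $(0,1)$ with $g(1)=0$, forcing $g<0$ there (and $g(0)=-1$). Thus the numerator equals $\left|2p^2\log p - (p-1)(3p-1)\right|$, completing the proof. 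The computation is otherwise routine; the only places demanding care are the bookkeeping of the three-part measure $\overline{\mu}_p^\alpha$ in the second-moment integral and this final sign determination that justifies the absolute value.
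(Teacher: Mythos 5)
Your proof is correct. The paper offers no argument for Claim \ref{func-value-at-mu-p} beyond calling it a straightforward evaluation (cf.\ \cite{Sh1}), and your computation is exactly the intended one, carried out correctly: the splitting of $I_\alpha$ so that the cross term $\int \frac{|z|^2}{2\alpha}\,\d\nu - \Sigma(\nu)$ can be read off from the potential identities already established in Section \ref{sec_min_meas} (with $c_1 = 0$ for the equilibrium case, consistent with $p=1$), the polar-coordinate second moments, and the monotonicity argument showing $2p^2\log p - (p-1)(3p-1) < 0$ on $[0,1)$, which is the one point needed to justify the absolute value in the stated formula.
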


\begin{remark}
The above result also holds in the range $p>1$.
\end{remark}



\section{Analysis of the GEF zeros process}\label{cond-dist-GEF}
In order to prove Theorem \ref{conv-cond-meas-hole-GEF} and Theorem \ref{num-zeros-in-annulus-GEF} we need to understand the behavior of linear statistics of the GEF zeros, conditioned on the hole event in $\{ |z| < R \}$. We recall, that linear statistics are the random variables
$$ n_F(\varphi ; R) = \sum_{z :\, F(z) = 0} \varphi\left(\frac{z}{R}\right), $$
where $\varphi$ is a smooth (say $C^2$) test function with compact support, and we sum over all the zeros of the GEF (to prove Theorem \ref{num-zeros-in-annulus-GEF} the test function $\varphi$ has to depend on $R$, but here we ignore this technicality).

The analysis presented in the previous section requires some modifications in order to obtain the conditional intensity for the GEF zeros process. The main idea is to approximate the (scaled) GEF with the Weyl polynomials
\[
P_{\a,R}(z) = \sum_{k=0}^N \frac{\xi_k}{\sqrt{k!}} (Rz)^k,
\]
where $\a>1$ is a large parameter (eventually depending on $R$), and $N = N(\a, R) = {\lfloor \a R^2 \rfloor}$ is the degree of $P_{\a,R}$.

Roughly speaking, with this choice of the parameters the scaled GEF $\f(Rz)$ (defined in \eqref{GEF}) and the polynomial $P_{\a,R}$ have a very similar bevaviour inside a disk of radius $\sqrt{\beta}$, as long as $\beta \ll \a$. Therefore, by taking $\a$ large we can obtain an understanding of the conditional intensity of the Gaussian zeros (under conditioning by $\H_R$), by analyzing the same problem for the polynomials.

\begin{remark}
It turns out that in order to carry out this approximation scheme, we need to let $\a \to \infty$ logarithmically with $R$. A drawback of this is that we cannot use `off-the-shelf' large deviation principles for empirical measures of random polynomial zeros such as \cite{ZZ}. 
\end{remark}

\subsection{Joint probability density and the limiting functional}
The joint density of the zeros of $P_{\a,R}$ with respect to Lebesgue measure on $\C^N$ is given by (cf. \eqref{weyl-joint-dens})
\begin{equation}
\label{density}
\rho_R(z_1, \dots, z_N) = (C_{N,R,\alpha})^{-1} {|\D(z_1, \dots, z_N)|^2}{\l(\frac{R^2}{\pi}\int_\C |Q_N(w)|^2 e^{-R^2|w|^2} \, \d m(w) \r)^{-(N+1)}},                                                                                                                                                                                                             \end{equation}
where $Q_N$ is the polynomial
\[ Q_N(w) = (w-z_1)\dots (w-z_N), \]
and $C_{N,R,\alpha}$ is a normalizing constant. As was briefly outlined in \ref{weyl-polynomials}, at the exponential scale one can approximate this density with the limiting functional
\[
I^Z_\a(\mu)= 2 \sup_{z \in \C} \l( U_\mu(z) - \frac1{2 \a}|z|^2 \r) - \S(\mu) - C_\a,
\]
where we used $\frac{N}{R^2} \approxeq \a$.

\begin{remark}
The global minimizer of the functional $I^Z_\a$ is the uniform probability on the disk $\{|z| \le \sqrt{\a}\}$, which we denoted by $\overline{\mu}^\alpha_{\mathrm{eq}}$.
\end{remark}

\subsubsection{Deviation inequality}
The analysis is done in a similar way to the case of the $\beta$-Ginibre ensembles (Sections \ref{dev-ineq-ginibre} and \ref{approx-joint-intens}).

Recall that $N_F(R)$ is the number of zeros of the GEF inside the disk $\{ |z| < R \}$. To fix ideas, consider the following event
$$ F^Z_p(R) = \left\{ N_F(R) \le p R^2 \right\}, \quad p\in[0,1),$$
and introduce the following sets of measures,
$$ \cF_p = \left\{ \mu \in \probmeas : \mu(D) \le \frac{p}{\alpha}  \right\}, \quad p\in [0,1) $$
and
$$ \cL^Z_{\varphi, \lambda} = \left\{ \mu \in \probmeas : \left| \int_\C \varphi(w) \, \d \mu (w) - \int_\C \varphi(w) \, \d \overline{\nu}_p^\alpha (w) \right| \ge \lambda \right\}, $$
where $D$ is the open unit disk, and $\overline{\nu}_p^\alpha$ is the restriction of the limiting measure $\mu^Z_p$ (introduced in Section \ref{defic-over}) to the disk $\{|z| \le \sqrt{\a}\}$, and normalized to be a probability measure.

In a similar fashion to \eqref{prob-lin-stats-upper-bnd}, one can show
\begin{align*}
\P \left[F^Z_p(R) \cap \left\{\left| n_F(\varphi; R) - R^2 \int_\C \varphi(w) \, \d \mu^Z_p(w) \right| \ge \lambda \right\} \right]\quad\quad\quad\quad\quad\quad\quad\quad\quad\quad \nonumber \\
\le \exp\left(- N^2 \inf_{\mu\in \cF_p \cap \cL^Z_{\varphi,\widetilde{\lambda}}} \left[I^Z_{\alpha} (\mu) - I^Z_\alpha(\overline{\mu}^\alpha_{\mathrm{eq}}) \right] + O(N \log N) \right), 
\end{align*}
where $\widetilde{\lambda} = \frac{\lambda}{2 N}$. Then, combining Claim 11 from \cite{GN} (corresponding to Claim \ref{convexity-of-functional} for the Ginibre ensemble), together with a lower bound estimate for the probability of $F^Z_p(R)$, we obtain the deviation inequality
$$ \P_{F^Z_p(R)} \left[\left|n_F(\varphi ; R) - R^2 \int_\C \varphi(w) \, \d \mu^Z_p (w) \right| \ge \lambda \right] \le 
\exp\left(-\frac{C}{\fD (\varphi)} \lambda^2 + C_\varphi R^2 \log^2 R\right).
$$

\begin{remark}
The actual proof is technically more involved, in large part because the choice of the value of $N$ has to be random.
\end{remark}

\subsubsection{Lower bound for $\P\left[F^Z_p(R)\right]$}\label{low-bnd-for-GEF}
Because of the circular symmetry of the problem, one can use analytic techniques to obtain the lower bound for the probability of the event $F^Z_p(R)$, which are not available in the case of the $\beta$-Ginibre ensembles.

The main idea is to use Rouch{\'e}'s theorem. More precisely, recalling that the GEF is given by the Gaussian Taylor series
\[ \\\f(z)=\sum_{k=0}^{\infty} \frac{\xi_k}{\sqrt{k !}} z^k,
\]
we explicitly construct an event where the term $\left|\xi_{k_{0}}\frac{z^{k_{0}}}{\sqrt{k_{0}!}}\right|, \,\, k_0 = {\lfloor p R^2 \rfloor}$  dominates the sum over all the other terms (on the circle $\left\{ \left|z\right|=R\right\}$). This simple but effective method originally appeared in the paper $\cite{STs-1}$, and was later used in many other problems of this type.

\subsubsection{The minimizing measures}
In order to find the limiting conditional measures for the GEF zeros process, one has to identify the (probability) measure $\overline{\nu}_p^\alpha$ which minimizes the functional $I^Z_\a$ over the set $\cF_p$. The interested reader can find the details in \cite[Section 5]{GN}.

\section{Conditional intensities in 1D}
The conditional intensity around a hole has been studied in 1D (where it is usually called a `gap'), in the context of the \emph{GUE} (Gaussian Unitary Ensemble) point process in \cite{MaS}. In this section we briefly describe the approach  in \cite{MaS}, and compare and contrast the results therein with the situation we already discussed in 2D.

In 1D, under the natural scaling (necessary for obtaining an LDP from the GUE), the ``droplet'' (that is, the minimizer of the LDP rate functional) assumes the form of the famous \textsl{semicircle distribution}. With the normalization of \cite{MaS}, this density is given by
$$ f(x) = \frac1\pi \sqrt{2 - x^2}, \quad |x| \le \sqrt{2}. $$
Under this scaling, the ``hole'' assumes the form of an interval $(\zeta_1,\zeta_2)$. In the important case where $(\zeta_1,\zeta_2)$ is an interval symmetric about the origin, denoted $(-w,w)$, the (scaled) conditional intensity has the form 
\[ f_w(x) = \frac{1}{\pi} \sqrt{  \frac{ L^2 - x^2  }{ x^2 - w^2} } |x|, \quad x \in [-L,-w] \cup [w,L],
\]
where $L=\sqrt{w^2+2}$. In particular, we note that there is no singular component, in contrast with both of the models we considered in 2D. Figure \ref{one-dimen-cond-intens} depicts the density for $w = 1$.

\begin{figure}[ht]
\label{1D}
    \centering
    \begin{minipage}{0.55\textwidth}
        \centering
        \includegraphics[width=1.0\textwidth]{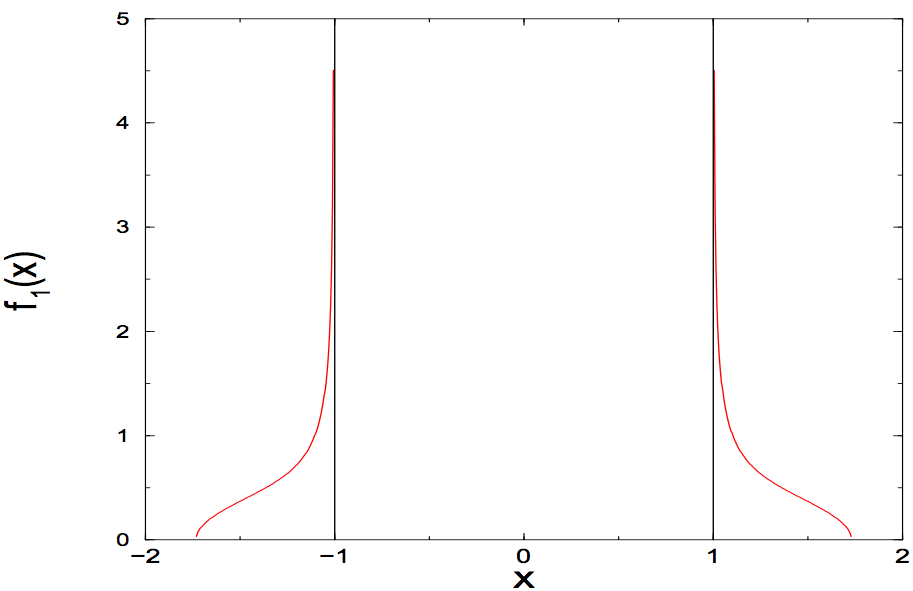} 
        \caption{Conditional intensity profile in 1D}\label{one-dimen-cond-intens}
    \end{minipage}\hfill
\end{figure}

In \cite{MaS}, the authors approach this problem by obtaining a singular integral equation for $f_w$, which is deduced essentially from a variational perturbation of the LDP rate functional around the minimizing measure. The authors then illustrate two approaches to solving this singular integral equation - one of them being a Riemann-Hilbert approach and the other being via an application of Tricomi's theorem (\cite{Tr}). 

In  \cite{MaS}, the authors also study the particle distribution for \textsl{atypical indices} for the GUE. The \textsl{index} $N_+$ of a configuration of $N$ particles on $\R$ is the number of particles on $\R_+$. By symmetry, the typical value of $N_+/N=1/2$. Using similar variational techniques (as discussed above) on the LDP rate functional for the GUE ensemble, in \cite{MaS} the authors obtain the asymptotics of the probability of an atypical index $N_+/N = c \ne 1/2$, as well as the typical particle profile given such an atypical index. This can be compared with \cite{ArSZ}, where a similar problem has been studied for the Ginibre ensemble using free boundary techniques. 

\section{Simulation of the hole event and numerical aspects}
Numerical methods to effectively simulate the distribution of zeros (or eigenvalues), conditioned on a large hole, is a challenging problem, because of the rarity of the hole event and the strong correlation among the particles. E.g., for the Ginibre ensemble, the asymptotics of the hole probabilities can be understood via the statistical independence of their absolute values, but this approach is not useful for simulations, because it carries no information about the correlations between the particles.

In the present paper, Figure \ref{GEF_zeros_hole} is obtained by simulating the GEF (and then the zeros thereof) under the conditions (on the coefficients) that produce the tight lower bound for the hole probability (as alluded to in Section \ref{low-bnd-for-GEF}). Figure \ref{Ginib_hole} is obtained by manually moving the eigenvalues of a Ginibre matrix (that are inside the disk)  to the disk's  boundary. In \cite{MaS}, a modified Metropolis Hastings algorithm was studied for simulating such conditional distributions (conditioned on hole, overcrowding or deficiency events), for the GUE process in 1D.

A 2D analogue of such an algorithm, for the Ginibre ensemble, would consist in the following:
We start with a ``legitimate'' particle configuration, namely one that satisfies the constraint of having a hole. E.g., a reasonable initial configuration would be equi-spaced points on the boundary of the hole.
Given a ``legitimate'' configuration $(\la_1,\dots,\la_N)$, we generate a new one $(\la_1',\dots,\la_N')$ by perturbing a (randomly picked) particle by a small Gaussian noise, conditioned to avoid the ``hole''. Ideally, we then replace the current configuration with the new one with probability
$$\min \l( \frac{f_N(\la_1',\dots,\la_N')}{f_N(\la_1,\dots,\la_N)} ,1 \r),$$
where $f_N$ is the probability density function of the Ginibre ensemble of size $N$. However, the generation of new configurations that avoid the hole  introduces an inherent asymmetry, which have to be taken into account in the acceptance probability for this approach. More precisely, one has to add the ratio of the probability to move from the new configuration to the old one, over the probability to move from the old configuration to the new one (which are not the same).

Figure \ref{Ginib-simul-1} presents the result of $10000$ iterations of the above algorithm, in the case of a circular hole. The initial configuration consists of $2000$ points uniformly distributed in the annulus outside of the excluded disk, and inside the support of the equilibrium measure (indicated by the outer circle). It is more difficult to implement this algorithm efficiently for the GEF zero process, mainly because the finite particle density $f_N$  is considerably more complicated for the zeros, involving  interactions of all orders up to $N$.

\begin{figure}[ht]
\label{1D}
    \centering
    \begin{minipage}{0.55\textwidth}
        \centering
        \includegraphics[width=1.0\textwidth]{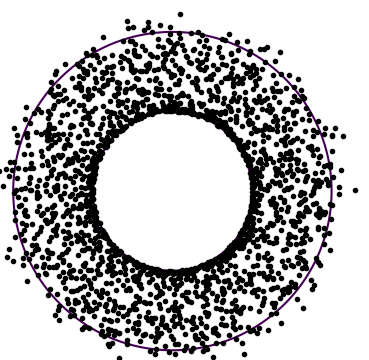} 
        \caption{Simulation of a circular hole by a modified Metropolis Hastings algorithm}\label{Ginib-simul-1}
    \end{minipage}\hfill
\end{figure}

A different but related question is to numerically solve constrained optimization problems  on the space of probability measures (on a Euclidean space), like the ones arising in the hole problem for the Ginibre ensemble and the GEF zeros. In the setting of the Ginibre ensemble, the goal is to minimize a weighted logarithmic energy \[ I(\mu) = \int V(x) \, \d \mu(x) - \S(\mu) \] of a probability measure (or, more generally, a finite Borel measure) $\mu$, subject to constraints on its support. This can be directly related to LDP rate functionals -  for example, the LDP rate functional for the Ginibre process is a logarithmic energy with a quadratic weight $V$.

To our knowledge, very little is known about this problem in dimensions greater than one. In 1D, a similar numerical problem has been addressed by \cite{CDV} in the weighted case (using an approach involving iterated Balayage), and by \cite{HV}, \cite{CD} in the unweighted case.  Even in the 1D situation, there are various assumptions on the Riesz measure corresponding to the weight $V$, which would be of interest to relax.

It remains a non-trivial and highly interesting question  to devise efficient numerical techniques to simulate the particle configurations for the hole (and, in the same vein, for overcrowding and deficiency) events. In the case of the hole event for the Ginibre ensemble, one can use weighted Leja points (see \cite[Chapter V]{SaT}) to approximate the most likely eigenvalue configurations (given by the weighted Fekete points, mentioned in Section \ref{genhole}). Finding a similar method for the GEF zeros process seems to be an interesting problem.

\section{Acknowledgements}
We thank the authors of the paper \cite{MaS} for allowing us to use the picture in Figure \ref{one-dimen-cond-intens}. We thank Diego Ayala for allowing us to use the picture in Figure \ref{Ginib-simul-1}. We thank the anonymous referee for numerous helpful suggestions. The work of S.G. was supported in part by the ARO grant W911NF-14-1-0094 and the NSF grant DMS-1148711.

\end{document}